\DeclareMathAlphabet{\mathscr}{T1}{calligra}{m}{n}
\theoremstyle{plain}
\newtheorem{theorem}{Theorem}[subsection]
\newtheorem{proposition}[theorem]{Proposition}
\newtheorem{corollary}[theorem]{Corollary}
\newtheorem{lemma}[theorem]{Lemma}
\theoremstyle{definition}
\newtheorem{definition}[theorem]{Definition}
\theoremstyle{remark}
\newtheorem{remark}[theorem]{Remark}
\DeclareMathOperator{\codim}{codim}
\newcommand{\End}{\mathcal{E}nd}
\DeclareMathOperator{\Sym}{Sym}
\DeclareMathOperator{\rk}{rk}
\DeclareMathOperator{\Pic}{Pic}
\DeclareMathOperator{\Res}{Res}
\newcommand{\cE}{\mathcal{E}}
\newcommand{\cO}{\mathcal{O}}
\newcommand{\cM}{\mathcal{M}}
\newcommand{\cI}{\mathcal{I}}
\newcommand{\cL}{\mathcal{L}}
\newcommand{\cN}{\mathcal{N}}
\newcommand{\cG}{\mathcal{G}}
\newcommand{\calC}{\mathcal{C}}
\newcommand{\calCt}{\widetilde{\mathcal{C}}}
\newcommand{\PP}{\mathbb{P}}
\newcommand{\calP}{\mathcal{P}}
\newcommand{\GG}{\mathbb{G}}
\newcommand{\LL}{\mathcal{L}}
\newcommand{\MM}{\mathcal{M}}
\newcommand{\MMt}{\widetilde{\mathcal{M}}}
\newcommand{\OO}{\mathcal{O}}
\newcommand{\Ctilda}{{\widetilde{C}}}
\newcommand{\pit}{\widetilde{\pi}}
\newcommand{\GL}{\operatorname{GL}}
\newcommand{\SL}{\operatorname{SL}}
\newcommand{\diffone}{\mathcal{D}^{(1)}}
\newcommand{\difftwo}{\mathcal{D}^{(2)}}
\newcommand{\Hit}{\operatorname{Hit}}
\newcommand{\PH}{\operatorname{PH}}
\newcommand{\circdot}{{_\circ^\circ}}
\newcommand{\rkdetsanti}[1]{\cN^{\pm,s}_{\SL_{#1}}} 
\newcommand{\rkdets}[1]{\MM^s_{\SL_{#1}}}
\newcommand{\rkdetssanti}[1]{\cN^{\pm,ss}_{\SL_{#1}}} 
\newcommand{\rkdetsplus}[1]{\cN^{+,s}_{\SL_{#1}}}
\newcommand{\rkdetsmin}[1]{\cN^{-,s}_{\SL_{#1}}}
\newcommand{\rksanti}[1]{\cN^{\pm,s}_{\GL_{#1}}}
\newcommand{\rksstar}[1]{\MM^{*,s}_{\GL_{#1}}}
\newcommand{\rkssanti}[1]{\cN^{\pm,ss}_{\GL_{#1}}}
\newcommand{\rkstilde}[1]{\MMt^s_{\GL_{#1}}}
\newcommand{\rkdetstilde}[1]{\MMt^s_{\SL_{#1}}} 
\newcommand{\rksstartilde}[1]{\MMt^{*,s}_{\GL_{#1}}}
\newcommand{\rkdetstildepmfixed}[1]{\left(\rkdetstilde{#1}\right)^{\sigma}_{\pm}}
\newcommand{\rkstildepmfixed}[1]{\left(\rkstilde{#1}\right)^{\sigma}_{\pm}}
\newcommand{\rkdetstildeplusfixed}[1]{\left(\rkdetstilde{#1}\right)^{\sigma}_{+}}
\newcommand{\rkdetstildeminusfixed}[1]{\left(\rkdetstilde{#1}\right)^{\sigma}_{-}}
\title{The Prym-Hitchin Connection and Anti-Invariant Level-Rank Duality}
\author{Thomas Baier}
\address{Thomas Baier\\ CAMGSD\\
Instituto Superior T\'ecnico\\
Av. Rovisco Pais\\
1049-001 Lisboa\\
Portugal}
\email{thomas.baier@novasbe.pt}
\author[Michele Bolognesi]{Michele Bolognesi}
\address{Michele Bolognesi\\
Universit\'e Grenoble Alpes\\ CNRS\\
IF\\
38000 Grenoble\\ France}
\email{\mbox{michele.bolognesi@univ-grenoble-alpes.fr}}
\author{Johan Martens}
\address{Johan Martens\\ School of Mathematics and Maxwell Institute\\ The University of Edinburgh\\ Peter Guthrie Tait Road\\ Edinburgh EH9 3FD\\ United Kingdom}
\email{johan.martens@ed.ac.uk}
\author{Christian \textsc{Pauly}}
\address{Christian Pauly \\ Laboratoire de Math\'ematiques J.A. Dieudonn\'e \\ UMR  7351 CNRS \\ Universit\'e C\^ote d'Azur
 \\ 06108 Nice Cedex 02, France}
\email{pauly@unice.fr}
\thanks{
MB was supported by the ANR grant ANR-20-CE40-0023. JM was supported in part by EPSRC grant EP/N029828/1, and would like to thank the Isaac Newton Institute for Mathematical Sciences, Cambridge (funded by EPSRC grant EP/R014604/1), for support and hospitality during the programme \emph{New equivariant methods in algebraic and differential geometry}, where work on this paper was undertaken.  
}
\date{\today}							
\begin{document}

\begin{abstract}
We construct a ``Hitchin-type'' connection on bundles of non-abelian theta functions on  higher-rank Prym varieties, for unramified double covers of curves.  We formulate a version of level-rank duality in this Prym setting (building on work of Zelaci), show it holds for level one, and establish that the duality respects the flat connections at all levels.
\end{abstract}

\maketitle

\section{Introduction}

 Prym varieties are a classical topic of study in algebraic geometry, going back to the 19th century, and brought into a modern context by Mumford \cite{mumford:1974}.  Initially just considered in the context of a double covering of a curve, as the moduli space of line bundles on the covering curve that dualize under the involution, they can be defined for much more general morphisms between curves.  In the original setting of double covers, however, a generalization can also be made to higher-rank vector bundles, as was recently done by Zelaci \cite{ zelaci:thesis, zelaci:twistconf, zelaci:2019, zelaci:2022}. One now considers vector bundles on the covering curve, possibly with fixed determinant, that dualize when pulled back under its involution. 

As with all abelian varieties, the sections of the line bundle that provides the principal polarization (also known as theta functions) for classical Prym varieties give rise to a vector bundle over the moduli space of the abelian varieties.  This bundle is naturally equipped with a flat projective connection  induced by a heat operator for the theta functions \cite{welters:1983}.

For higher-rank vector bundles with fixed determinant over a curve, or principal bundles with a non-abelian, semi-simple structure group, Hitchin constructed a flat projective connection on the associated bundles of non-abelian theta functions \cite{hitchin:1990}.  Similar to Welters' approach, the connection of Hitchin arises through a projective heat operator on the sections of the line bundle over the moduli space of bundles. The symbol of this projective heat operator is dual to the quadratic part of the Hitchin system on the moduli space of Higgs bundles.  Such a symbol uniquely determines a heat operator if a number of cohomological conditions are satisfied, and Hitchin verified these, using the similarities between the Hitchin system and the Narasimhan-Atiyah-Bott K\"ahler form on the moduli space of bundles.

The main aim of this paper is now to construct a ``Hitchin type'' flat projective connection on the bundles of non-abelian theta functions for the higher-rank Prym varieties of Zelaci (Theorems \ref{PHconnection} and \ref{PHFlat}).  

Note that the symplectic geometric (or K\"ahler) description of these moduli spaces (well-known and classical in the standard setting) has not yet been developed.  We therefore take a purely algebro-geometric road to verifying the conditions on the candidate symbol of the heat operator, which in the case of bundles on a curve (no covering) was developed by the authors in \cite{BBMP:2020}.  The key tool there (substituting for the Narasimhan-Atiyah-Bott K\"ahler form) is an explicit determination of the Atiyah class of the determinant-of-cohomology line bundle on the moduli space of bundles, based on work of Beilinson and Schechtman \cite{beilinson.schechtman:1988}. 

Once the existence of the Prym-Hitchin connection is established, we can look at generalizing some of the properties that are known to be satisfied by the classical Hitchin connection. 
In particular, we will look at level-rank duality in the anti-invariant case, extending work of Zelaci \cite{zelaci:twistconf}.  It was shown by Belkale in \cite{belkale:2009} that the level-rank duality respects the connections on the various bundles involved in the statement.  We formulate a version of level-rank duality in the Prym setting, see (\ref{strangedualityatlast}), show that it is an isomorphism at level one (Corollary \ref{anti-lr-level1}), and that at all levels it gives a flat morphism (Theorem \ref{SDflatgeneral}).  We expect the duality to be an isomorphism at all levels, but do not establish this\footnote{Since this paper first appeared on the arXiv, we have been informed by Mukhopadhyay that he has established this result for arbitrary fixed curves \cite{mukh:forth}.}.  In order to do this, we prove in Theorem \ref{equivlaszlo} a variant of a theorem of Laszlo \cite{laszlo:1998}, showing that the Prym-Hitchin connection is equivalent to the WZW connection for twisted conformal blocks.  We also need to make use of conformal embeddings in the setting of twisted conformal blocks (see also \cite{mukhopadhyay.zelaci:2020} for some related work).

The formalism of \cite{BBMP:2020} was recently also used by Biswas, Mukhopadhyay and  Wentworth in \cite{BMW:2023, bmw:2024, biswas2023geometrization} and Ouaras in \cite{ouaras:thesis, ouaras2023parabolic} 
to construct a Hitchin connection in the case of parabolic principal bundles with arbitrary simple structure group, or parabolic vector bundles with arbitrary fixed determinant, respectively.  Both our setting, and that of these parabolic versions, can be understood as special cases of moduli space of torsors for a parahoric Bruhat-Tits group scheme, and we expect a construction of the Hitchin connection to go through in that generality. As of this writing some foundational elements (mainly regarding the corresponding moduli spaces of parahoric Higgs bundles, as well as a parahoric version of Beilinson and Schechtman's theory of trace complexes \cite{beilinson.schechtman:1988}) are missing to carry out the construction in this level of generality.  We hope to revisit this in the future.

In this paper we will only be concerned with unramified covers of curves.  Though much of the results (in particular the existence of the connection) would go through even for ramified covers, and indeed Zelaci's work already takes place in this context, it would add an extra layer of complexity in an already quite baroque setting.  As our own motivating application of the Prym-Hitchin connection does not involve ramification, and ramification is essentially a variation on the parabolic settings mentioned above, we have chosen to focus purely on the new aspects of this work, which is the use of a symmetry that manifests itself simultaneously as the Galois group of a cover of curves, and as automorphisms of the structure group of the bundles in the moduli problem.  This paper is the first to construct a Hitchin connection in this context.

Our primary motivation for developing the Prym-Hitchin connection is that it plays a key role in understanding a sporadic behaviour of the ordinary Hitchin connection at level four, in particular an abelizanization of this connection in terms of theta functions for Prym varieties coming from all unramified double covers of the original curve (which implies that its monodromy is finite, unlike what happens at generic levels).  This result is obtained in our accompanying paper \cite{BBMP:2025}.

The rest of this paper is organised as follows: in Section \ref{backgroundconnections} we recall background material on connections and heat operators.  In Section \ref{higherpryms} we summarise some of the relevant definitions and results from Zelaci's work on higher-rank Prym varieties.  In Section \ref{further} we prove additional results on the cohomology of these higher-rank Prym varieties.  We then show that the requirements for the candidate symbol map to determine a projective heat operator are satisfied, and hence the existence of the Prym-Hitchin connection, in Section \ref{atlast}. In Section \ref{twistedlaszlo} we show that Laszlo's comparison theorem (which shows the equivalence between the Hitchin connection and the WZW connection for bundles of conformal blocks) also holds in the Prym context. In Section \ref{level1} we formulate level-rank 
duality in the Prym setting, and verify that it holds at level one, through considerations of the relevant theta-groups.    The Laszlo theorem then allows us to establish the flatness of the level-rank morphism for general levels, following an approach due to Belkale \cite{belkale:2009}, in Section \ref{generalflat}.  Finally, Appendix \ref{appendixtwistedconf} reviews the relevant constructions for the twisted WZW connection, mainly following Damiolini \cite{damiolini:2020}, and also discusses the consequences of conformal embeddings in this twisted context.

\subsection{Acknowledgments} The authors would like to thank J\o rgen Ellegaard Andersen, 
Prakash Belkale, Indranil Biswas, Chiara Damiolini, Jochen Heinloth, Swarnava Muk\-ho\-padh\-yay, Zakaria Ouaras, Karim Rega, 
Angelo Vistoli, Richard Wentworth 
and Hacen Zelaci for useful conversations and remarks at various stages of this work.

 \section{Connections of Hitchin type}\label{backgroundconnections}

In this section, we recall algebro-geometric conditions for the construction of a projective connection on direct image sheaves following Hitchin's ideas from \cite{BBMP:2020}, to which we refer for the necessary background. For simplicity 
we assume that the base field is $\mathbb{C}$, which will be necessary in sections 7 and 8 dealing with conformal
blocks. As in \cite{BBMP:2020} the construction of the Prym-Hitchin connection remains valid over an algebraically closed base field of arbitrary characteristic (with a few exceptional values).

\subsection{(Projective) connections and Atiyah algebroids}
We briefly recall here A\-ti\-yah's viewpoint on connections \cite{atiyah:1957}, as splitting of Atiyah sequences, as we will be using that throughout.

Given a vector bundle on a smooth scheme $E\rightarrow S$, the \emph{Atiyah algebroid} $\mathcal{A}(E)$ of $E$ is the sheaf of first order differential operators with diagonal symbol, i.e. the middle term in the top short exact sequence (known as the \emph{Atiyah sequence}) of the commutative diagram
$$\begin{tikzcd}
    0\ar[r] & \mathcal{E}nd(E)\ar[r]\ar[d, equal] & \mathcal{A}(E)\ar[r]\ar[d, hook] & T_S\ar[r]\ar[d,hook,"-\otimes \operatorname{Id}_E"] & 0 \\
    0\ar[r] & \mathcal{E}nd(E)\ar[r] & \mathcal{D}^{(1)}_S(E)\ar[r] & T_S\otimes \mathcal{E}nd(E)\ar[r] &0.
\end{tikzcd}$$
A connection $\nabla$ on $E$ is a splitting of the Atiyah sequence
$$\begin{tikzcd} 0\ar[r] & \mathcal{E}nd(E)\ar[r] & \mathcal{A}(E)\ar[r] & T_S\ar[r] \ar[l,dashed, bend left=30, "\nabla"] & 0. \end{tikzcd}$$  It is flat if $\nabla$ preserves the natural Lie brackets.
A projective connection\footnote{This is easily seen to be equivalent to the definition of projective connection given in \cite[Section 1]{looijenga:2013}.} on $E$   is a splitting of the push-out of the Atiyah sequence by $\mathcal{E}nd(E)\rightarrow \mathcal{E}nd(E)\big/\mathcal{O}_S$:
$$\begin{tikzcd} 0\ar[r] & \mathcal{E}nd(E)\big/\mathcal{O}_S\ar[r] & \mathcal{A}(E)\big/\mathcal{O}_S\ar[r] & T_S\ar[r] \ar[l,dashed, bend left=30, "\nabla"] & 0, \end{tikzcd}$$ which is again called flat if it preserves the Lie brackets.  Remark that a (projective) connection on a vector bundle $E$ induces a (projective) connection on the dual bundle $E^*$, and if two bundles carry (projective) connections, there is natural one induced on their tensor product.

Alternatively, one can characterise projective connections through local connections $\nabla^i$ defined on a covering $U_i$ of $S$, with the condition that on the intersections $U_i\cap U_j$ one has that $\nabla^i_X(s)-\nabla^j_X(s)=\omega^{ij}(X)s$, for some $\omega^{ij}\in\Omega^1(U_i\cap U_j)$.

\subsection{Flat morphisms between connections}
A morphism $\Phi$ between bundles $E$ and $F$ over $S$, equipped with connections $\nabla^E$ and $\nabla^F$, is said to be \emph{flat} (or to preserve the connections), if for every open $U
\subset S$, every $X\in T_S(U)$, and every $s\in E(U)$, we have $$\nabla^F_X(\Phi s)=\Phi\left(\nabla^E_X s\right).$$
This is equivalent to $\Phi$, thought of as a section of $F\otimes E^*$, being flat for the tensor connection, i.e. $\nabla^{F\otimes E^*}_X\Phi=0$ for all vector fields $X$.

If $E$ and $F$ are equipped with projective connections, then $\Phi$ is flat if locally $$\widetilde{\nabla}^F_X(\Phi s)-\Phi\left(\widetilde{\nabla}^E_X s\right)=\omega(X) \Phi(s),$$
for some one-form $\omega$, where $\widetilde{\nabla}^F$ and $\widetilde{\nabla}^E$ are local connections lifting the projective connections on $E$ and $F$.
Flat morphisms between bundles with projective connections have constant rank.

\subsection{Projective connections from symbol maps} Consider a smooth surjective morphism of smooth schemes $\pi:\cM\rightarrow S$, and a line bundle $L\rightarrow \cM$ such that $\pi_\ast L$ is 
locally free, hence a vector bundle. One way to construct a connection on $\pi_\ast L$ is via a heat operator on $L$.  We briefly summarise this notion, and refer to \cite{vangeemen.dejong:1998} and \cite{BBMP:2020} for further details. 

On $\mathcal{M}$ 
we can consider the sheaf $$\mathcal{W}_{\cM/S}(L)=\diffone_{\cM}(L)+\difftwo_{\cM/S}(L)\subset \difftwo_{\cM}(L),$$
which fits in the short exact sequence 
$$  \begin{tikzcd}0\ar[r] & \diffone_{\cM/S}(L)\ar[r] & \mathcal{W}_{\cM/S}(L) \ar[r,"^{\sigma_S\oplus\sigma_2}"] &\pi^* (T_S)\oplus \Sym^2 T_{\cM/S}\ar[r] & 0\end{tikzcd}
$$
(here $\difftwo_{\cM/S}(L)$ refers to the second order differential operators that are vertical relative to $\pi$, and $\sigma_S$ and $\sigma_2$ are the natural symbol maps).

A \emph{heat operator} $D$ on $L$ is now a splitting of $\sigma_S$, which has an associated symbol map \begin{equation}\label{symbolofheat}\begin{tikzcd}\rho_D=\pi_{\ast}(\sigma_2)\circ D:  T_S\ \ar[r] &\pi_\ast\Sym^2 T_{\cM/S}.\end{tikzcd}\end{equation}
Similarly a \emph{projective heat operator} $D$ on $L$ is a splitting of $$\begin{tikzcd} \left( \pi_\ast \mathcal{W}_{\cM/S}(L) \right) / \mathcal{O}_S\ar[r,"^{\sigma_S}"]& T_S.\end{tikzcd}$$
Note that a projective heat operator also has a well-defined symbol map as in (\ref{symbolofheat}).

Each (projective) heat operator on $L$ canonically induces a (projective) connection on $\pi_*L$, see \cite[\S 2.3.3]{vangeemen.dejong:1998} or \cite[\S 3.3]{BBMP:2020}.

Moreover, a projective heat operator can in turn be specified by a suitable candidate symbol map.  To this end, let us denote the Kodaira--Spencer map associated to $\pi$ by
\[\begin{tikzcd}
  \kappa_{\cM/S} : T_S \ar[r]& R^1 \pi_\ast T_{\cM/S} ,\end{tikzcd}
\]
and the connecting homomorphism of the long-exact sequence associated to the symbol map of $\pi^{-1}\OO_S$-linear second order differential operators
\[
\begin{tikzcd}
  0 \ar[r] & T_{\cM/S} \ar[r] & \difftwo_{\cM/S}(L) / \OO_{\cM} \ar[r] & \Sym^2 T_{\cM/S} \ar[r] & 0
\end{tikzcd}
\]
by $$\mu_L : \pi_\ast \Sym^2 T_{\cM/S} \to R^1 \pi_\ast T_{\cM/S};$$ also $[L] \in R^1 \pi_\ast \Omega^1_{\cM/S}$ is the Atiyah class of $L$ relative to $\pi$. 
We can now state the conditions for a candidate symbol map to arise from a projective heat operator (this was originally done by Hitchin \cite[Theorem 1.20]{hitchin:1990} in an infinitesimal version). 
\begin{theorem}[{Van Geemen -- De Jong,\cite[\S 2.3.7]{vangeemen.dejong:1998}, \cite[Theorem 3.4.1]{BBMP:2020}}]\label{vgdj} With $L$ and $\pi:\cM\rightarrow S$ as before, 
if the following conditions hold for a given candidate symbol map $\rho: T_S \rightarrow \pi_\ast \Sym^2 T_{\cM/S}$:  \begin{enumerate}[(a)]
\item \label{vgdj-one} $\kappa_{\cM/S}+\mu_{L} \circ \rho=0,$ 
\item \label{vgdj-two} the map $$\begin{tikzcd}\cup [L]: \pi_*T_{\cM/S}\ar[r] &R^1\pi_*\mathcal{O}_{\cM}\end{tikzcd}$$ is an isomorphism, and
\item \label{vgdj-three} $\pi_*\mathcal{O}_{\cM}=\mathcal{O}_S$, 
\end{enumerate} then there exists a unique projective heat operator $D$ on $L$ with symbol $\rho$.
\end{theorem}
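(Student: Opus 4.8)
The plan is to build the required splitting of $\sigma_S\colon \left(\pi_\ast\mathcal{W}_{\cM/S}(L)\right)/\OO_S\to T_S$ directly out of $\rho$, by pushing forward the defining exact sequence of $\mathcal{W}_{\cM/S}(L)$ and checking that $X\mapsto(X,\rho(X))$ lands in the image of $\pi_\ast\mathcal{W}_{\cM/S}(L)$. Before doing this I would extract two consequences of hypotheses \eqref{vgdj-two} and \eqref{vgdj-three}. Since $L$ is a line bundle, $\diffone_{\cM/S}(L)$ fits in $0\to\OO_\cM\to\diffone_{\cM/S}(L)\to T_{\cM/S}\to 0$, the (negative of the) relative Atiyah sequence, whose long exact sequence of $R^\bullet\pi_\ast$ has connecting homomorphism $\cup[L]$. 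Using $\pi_\ast\OO_\cM=\OO_S$ and that $\cup[L]$ is an isomorphism, this yields $\pi_\ast\diffone_{\cM/S}(L)=\OO_S$ and, because $\cup[L]$ is in particular surjective, an injection $R^1\pi_\ast\diffone_{\cM/S}(L)\hookrightarrow R^1\pi_\ast T_{\cM/S}$. The first fact delivers uniqueness; the injectivity is the engine of existence.

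Next I would apply $\pi_\ast$ to $0\to\diffone_{\cM/S}(L)\to\mathcal{W}_{\cM/S}(L)\to\pi^\ast T_S\oplus\Sym^2 T_{\cM/S}\to 0$, using the projection formula (hypothesis \eqref{vgdj-three} again) to identify $\pi_\ast\pi^\ast T_S$ with $T_S$, obtaining
$$0\to\OO_S\to\pi_\ast\mathcal{W}_{\cM/S}(L)\to T_S\oplus\pi_\ast\Sym^2 T_{\cM/S}\xrightarrow{\partial} R^1\pi_\ast\diffone_{\cM/S}(L).$$
Thus $\left(\pi_\ast\mathcal{W}_{\cM/S}(L)\right)/\OO_S\cong\ker\partial$, compatibly with the symbol maps, so it suffices to show $\partial(X,\rho(X))=0$. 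The map $\partial$ splits along the two summands of its source. Restricting the $\mathcal{W}$-sequence over the subsheaf $\pi^\ast T_S$ gives $0\to\diffone_{\cM/S}(L)\to\diffone_{\cM}(L)\to\pi^\ast T_S\to 0$; functoriality of connecting maps against its symbol quotient $0\to T_{\cM/S}\to T_\cM\to\pi^\ast T_S\to 0$ shows that the first component of $\partial$, post-composed with $R^1\pi_\ast\diffone_{\cM/S}(L)\hookrightarrow R^1\pi_\ast T_{\cM/S}$, equals the Kodaira--Spencer map $\kappa_{\cM/S}$. Restricting over the other summand gives $0\to\diffone_{\cM/S}(L)\to\difftwo_{\cM/S}(L)\to\Sym^2 T_{\cM/S}\to 0$; quotienting by $\OO_\cM$ recovers the sequence defining $\mu_L$, so the second component of $\partial$, post-composed with the same injection, is $\mu_L$.

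Hypothesis \eqref{vgdj-one} now says exactly that $\kappa_{\cM/S}(X)+\mu_L(\rho(X))=0$, i.e. the image of $\partial(X,\rho(X))$ in $R^1\pi_\ast T_{\cM/S}$ vanishes; by injectivity of $R^1\pi_\ast\diffone_{\cM/S}(L)\hookrightarrow R^1\pi_\ast T_{\cM/S}$ we get $\partial(X,\rho(X))=0$. Hence $X\mapsto(X,\rho(X))$ factors through $\ker\partial=\left(\pi_\ast\mathcal{W}_{\cM/S}(L)\right)/\OO_S$, is $\OO_S$-linear, splits $\sigma_S$, and by construction has $\pi_\ast(\sigma_2)$-symbol equal to $\rho$: this is the sought projective heat operator $D$. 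For uniqueness, the difference of two projective heat operators with symbol $\rho$ is an $\OO_S$-linear map $T_S\to\left(\pi_\ast\mathcal{W}_{\cM/S}(L)\right)/\OO_S$ annihilated by both $\sigma_S$ and $\sigma_2$, hence valued in $\left(\pi_\ast\diffone_{\cM/S}(L)\right)/\OO_S=\OO_S/\OO_S=0$.

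The genuinely delicate point I expect is the bookkeeping used to decompose $\partial$ and to match its two components with $\kappa_{\cM/S}$ and $\mu_L$ — tracking all the symbol maps and sign conventions so that hypothesis \eqref{vgdj-one} can be fed in verbatim. Everything else is formal once hypothesis \eqref{vgdj-two} has turned $R^1\pi_\ast\diffone_{\cM/S}(L)\hookrightarrow R^1\pi_\ast T_{\cM/S}$ into an honest inclusion: that is precisely what upgrades the cocycle identity \eqref{vgdj-one}, which a priori only lives in $R^1\pi_\ast T_{\cM/S}$, into an actual lift rather than merely a lift up to $R^1\pi_\ast\OO_\cM$.
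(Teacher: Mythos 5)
Your argument is correct and is essentially the proof given in the cited sources (van Geemen--de Jong \S 2.3.7 and \cite[Theorem 3.4.1]{BBMP:2020}); the paper itself only quotes the theorem and does not reprove it. Your decomposition of the connecting map $\partial$ into the Kodaira--Spencer and $\mu_L$ components, the use of surjectivity of $\cup[L]$ to inject $R^1\pi_\ast\diffone_{\cM/S}(L)$ into $R^1\pi_\ast T_{\cM/S}$, and the identification $\pi_\ast\diffone_{\cM/S}(L)=\OO_S$ for uniqueness all match the standard argument.
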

Furthermore, flatness of this connection is also related to properties of the symbol map:
\begin{theorem}[{\cite[Theorem 4.9]{hitchin:1990} \cite[Theorem 3.5.1]{BBMP:2020}}]\label{flatness}
Under the conditions of Theorem \ref{vgdj}, the projective connection is flat if
\begin{enumerate}
    \item \label{flatness-one} for all sections $\theta, \theta'$ of $T_S$, we have $\{\rho(\theta), \rho(\theta')\}_{T^*_{\mathcal{M}/S}} = 0$,
    \item \label{flatness-two} the morphism $\mu_L$ is injective, and
    \item \label{flatness-three} there are no vertical vector fields, i.e. $\pi_* T_{\mathcal{M}/S}=0.$
\end{enumerate}
\end{theorem}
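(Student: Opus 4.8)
The plan is to compute the curvature of the projective connection on $\pi_\ast L$ that Theorem \ref{vgdj} attaches to the projective heat operator $D$ with symbol $\rho$, and to see that hypotheses (\ref{flatness-one})--(\ref{flatness-three}) force it to vanish. Working locally on $S$, lift $D$ to a map $T_S \to \pi_\ast \mathcal{W}_{\cM/S}(L) \subset \pi_\ast \difftwo_{\cM}(L)$; the induced connection is $\nabla_X s = D(X)(s)$, and its curvature is
$$R(X,Y) = [D(X),D(Y)] - D([X,Y]),$$
an operator on sections of $L$, well defined modulo the scalars $\OO_S$ since $[f,D(X)] = -X(f)\operatorname{Id}$ for $f \in \OO_S$. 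As the curvature of a connection, $R(X,Y)$ is $\OO_S$-linear, hence a relative differential operator on $L$; writing $D(X) = P_X + Q_X$ with $P_X$ of order at most one on $\cM$ and $Q_X$ vertical of order at most two (as allowed by the very definition of $\mathcal{W}_{\cM/S}(L)$), one sees that $R(X,Y)$ has order at most three and that, since $D$ splits $\sigma_S$, the $\pi^\ast T_S$-part of its symbol is $[X,Y]-[X,Y]=0$.

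The first real step is to use hypothesis (\ref{flatness-one}) to bring the order down to two. The only order-three contribution to $[D(X),D(Y)]$ comes from $[Q_X,Q_Y]$, whose leading symbol is the relative Poisson bracket on $T^\ast_{\cM/S}$ of $\sigma_2(Q_X) = \rho(X)$ and $\sigma_2(Q_Y) = \rho(Y)$, namely $\{\rho(X),\rho(Y)\}_{T^\ast_{\cM/S}}$; this vanishes by assumption, and $D([X,Y])$ has order at most two, so $R(X,Y)$ has order at most two. Using $\pi_\ast\OO_\cM = \OO_S$ (condition (\ref{vgdj-three}) of Theorem \ref{vgdj}) to identify the scalar ambiguity with $\OO_\cM$, we conclude that $R(X,Y)$ defines an element of $\pi_\ast\bigl(\difftwo_{\cM/S}(L)/\OO_\cM\bigr)$.

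Next I push forward the exact sequence $0 \to T_{\cM/S} \to \difftwo_{\cM/S}(L)/\OO_\cM \to \Sym^2 T_{\cM/S} \to 0$ used to define $\mu_L$, which yields
$$\begin{tikzcd} 0 \ar[r] & \pi_\ast T_{\cM/S} \ar[r] & \pi_\ast\bigl(\difftwo_{\cM/S}(L)/\OO_\cM\bigr) \ar[r,"\sigma_2"] & \pi_\ast\Sym^2 T_{\cM/S} \ar[r,"\mu_L"] & R^1\pi_\ast T_{\cM/S}. \end{tikzcd}$$
By hypothesis (\ref{flatness-two}) the connecting map $\mu_L$ is injective, so $\sigma_2$ vanishes identically on $\pi_\ast\bigl(\difftwo_{\cM/S}(L)/\OO_\cM\bigr)$; in particular $\sigma_2\bigl(R(X,Y)\bigr) = 0$, and therefore $R(X,Y) \in \pi_\ast T_{\cM/S}$. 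Finally hypothesis (\ref{flatness-three}) gives $\pi_\ast T_{\cM/S} = 0$, so $R(X,Y)=0$ for all $X,Y$, i.e. the projective connection is flat.

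The routine but delicate part — and the one I expect to take most of the effort — is the symbol bookkeeping in the first two paragraphs: verifying that $R(X,Y)$ is genuinely a relative operator with no $\pi^\ast T_S$-symbol, identifying the order-three symbol of $[D(X),D(Y)]$ with $\{\rho(X),\rho(Y)\}_{T^\ast_{\cM/S}}$ with the correct sign and normalisation, and keeping the passage to the projective quotient (modulo $\OO_S$, equivalently modulo $\OO_\cM$) consistent at each stage. Once that is in place, the conclusion follows formally from the two short exact sequences together with the vanishing in (\ref{flatness-one}), the injectivity in (\ref{flatness-two}), and the vanishing in (\ref{flatness-three}).
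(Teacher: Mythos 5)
Your argument is correct and is essentially the proof given in the cited sources (\cite[Theorem 3.5.1]{BBMP:2020}, going back to \cite[Theorem 4.9]{hitchin:1990}); the paper itself only quotes the result without reproving it. The curvature $[D(X),D(Y)]-D([X,Y])$ is a vertical operator of order at most three whose successive symbols are killed by conditions (\ref{flatness-one}), (\ref{flatness-two}) (via the long exact sequence defining $\mu_L$) and (\ref{flatness-three}), leaving only a scalar --- exactly as you describe.
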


\subsection{Hitchin's connection} The main application so far of Theorem \ref{vgdj} is Hitchin's original one to moduli of stable vector bundles with trivial determinant. For referencing purposes we recall the characteristic-free version: consider a smooth family $\pi_{s}:\calC\rightarrow S$ of projective curves of genus $g\geq 2$ (and $g\geq 3$ if $r=2$) defined over an algebraically closed field of characteristic $p$ not dividing $2$ and $r$, and the relative moduli scheme $\pi_e: \cM \to S$ of stable vector bundles of rank $r$ with trivial determinant, with $\LL \to \cM$ the relatively ample generator of its relative Picard group.

Furthermore, let $\begin{tikzcd} \rho^{\Hit} : R^1 {\pi_s}_\ast T_{\calC / S} \ar[r]& {\pi_e}_\ast \Sym^2 T_{\cM / S}\end{tikzcd}$ be the Hitchin symbol, which can be understood as the dual to the quadratic part of the Hitchin system: for a given bundle $E$ of trivial determinant on a curve $C$ we have
\begin{equation*}\begin{tikzcd}
\Sym^2H^0(C,\mathcal{E}nd^0{E})
\ar[r]& H^0(C,K_C^2)
\end{tikzcd}\end{equation*}
which dualizes to 
$$\begin{tikzcd}H^1(C,T_C)\ar[r]& \Sym^2T_{E}\mathcal{M}\cong\Sym^2 H^1(C,\mathcal{E}nd^0(E)),\end{tikzcd}$$
see \cite[Definition 4.3.1]{BBMP:2020} for the global description.

\begin{theorem}[{\cite[Theorems 3.6 and 4.9]{hitchin:1990},\cite[Theorem 4.8.1, 4.8.2]{BBMP:2020}}]
\label{existenceconnection} 
Consider any integer $k$ such that $p$ does not divide $k+r$ and such that $\pi_{e*}(\LL^k)$ is locally free. Then there exists a unique projective connection on the vector bundle $\pi_{e*}(\LL^{k})$ of non-abelian theta functions of level $k$, 
induced by a heat operator with symbol $$\rho=\frac{1}{r+k}\,\left(\rho^{\Hit}\circ \kappa_{\calC/S}\right).$$

Furthermore, the projective connection defined in this way is projectively flat.
\end{theorem}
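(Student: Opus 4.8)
The plan is to apply Theorem~\ref{vgdj} and Theorem~\ref{flatness} to the relative moduli scheme $\pi_e:\cM\to S$, with $L=\LL^k$ and with the candidate symbol $\rho=\frac{1}{r+k}\left(\rho^{\Hit}\circ\kappa_{\calC/S}\right)$; the whole argument then reduces to checking the three hypotheses \ref{vgdj-one}--\ref{vgdj-three} of Theorem~\ref{vgdj}, followed by \ref{flatness-one}--\ref{flatness-three} of Theorem~\ref{flatness}. (The denominator $r+k$ is invertible by the hypothesis $p\nmid k+r$; over $\CC$ there is nothing to worry about, and local freeness of $\pi_{e\ast}(\LL^k)$ is assumed.)

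First I would dispatch the conditions that do not involve the symbol. Fibrewise over $S$, the moduli space $\cM_C$ of stable rank-$r$ bundles with trivial determinant on a curve $C$ is a smooth variety obtained by removing a closed subset of codimension $\ge 2$ from the normal, projective, unirational variety $\cM^{ss}_C$, whose Picard group is $\ZZ$. From this (Narasimhan--Ramanan, Dr\'ezet--Narasimhan; as recalled in \cite{BBMP:2020}) one reads off $H^0(\cM_C,\OO)=\CC$, $H^1(\cM_C,\OO)=0$ and $H^0(\cM_C,T_{\cM_C})=0$, whence $\pi_{e\ast}\OO_\cM=\OO_S$ (condition \ref{vgdj-three}), $\pi_{e\ast}T_{\cM/S}=0$ (condition \ref{flatness-three}), $R^1\pi_{e\ast}\OO_\cM=0$, and therefore $\cup[\LL]$ in condition \ref{vgdj-two} is (trivially) an isomorphism between two vanishing sheaves.

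The substantive step is condition \ref{vgdj-one}, the identity $\kappa_{\cM/S}+\mu_{\LL^k}\circ\rho=0$, together with the injectivity of $\mu_{\LL^k}$ required for condition \ref{flatness-two}. Both come down to an explicit description of the relative Atiyah class $[\LL]\in R^1\pi_{e\ast}\Omega^1_{\cM/S}$, which controls the connecting map $\mu_{\LL^k}$ via the Atiyah class $[\LL^k]=k[\LL]$. Following \cite{BBMP:2020} I would compute $[\LL]$ by means of the Beilinson--Schechtman trace complex of $\mathcal{E}nd E$, which expresses the Atiyah class of the determinant-of-cohomology line bundle in terms of that of the universal bundle. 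Combining this with the description of $\rho^{\Hit}$ as the dual of the quadratic part of the Hitchin map (recalled before the theorem), one should arrive at $\mu_{\LL^k}\circ\rho^{\Hit}\circ\kappa_{\calC/S}=-(r+k)\,\kappa_{\cM/S}$: the $k$ is the coefficient in $[\LL^k]=k[\LL]$, and the shift by $r$ (the dual Coxeter number of $\mathfrak{sl}_r$) is the trace-complex contribution, matching $K_{\cM_C}\cong\LL^{-2r}$. This cohomological identity is the technical heart and the expected main obstacle; it substitutes for Hitchin's original input, the Narasimhan--Atiyah--Bott symplectic form, and is exactly the kind of computation that must be redone, without a K\"ahler model, in the Prym setting of the rest of the paper. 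The same Atiyah-class computation should also yield the injectivity of $\mu_{\LL^k}$ (using $\rk\Pic(\cM_C)=1$), which is condition \ref{flatness-two}.

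With \ref{vgdj-one}--\ref{vgdj-three} verified, Theorem~\ref{vgdj} furnishes the unique projective heat operator on $\LL^k$ with symbol $\rho$, hence the claimed projective connection on $\pi_{e\ast}(\LL^k)$. For projective flatness I would then invoke Theorem~\ref{flatness}: conditions \ref{flatness-two} and \ref{flatness-three} were already obtained, and condition \ref{flatness-one}, the vanishing $\{\rho(\theta),\rho(\theta')\}_{T^*_{\cM/S}}=0$, is precisely the (quadratic part of the) complete integrability of the Hitchin system --- the functions in the image of $\rho^{\Hit}$, viewed as quadratic Hamiltonians on $T^*\cM_C$, Poisson-commute, equivalently the Hitchin fibration is Lagrangian. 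This gives the projectively flat connection with symbol $\rho$, uniqueness being inherited from Theorem~\ref{vgdj}.
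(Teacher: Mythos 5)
The paper gives no proof of this statement---it is recalled verbatim from Hitchin and from \cite{BBMP:2020}---and your outline reproduces exactly the strategy of the cited proof (and of the paper's own Prym-case argument in Sections \ref{further}--\ref{atlast}): verify conditions \ref{vgdj-one}--\ref{vgdj-three} of Theorem \ref{vgdj} and \ref{flatness-one}--\ref{flatness-three} of Theorem \ref{flatness}, the substantive input being the Beilinson--Schechtman trace-complex computation of $[\LL]$, which yields $[\LL]\cup\bigl(\rho^{\Hit}\circ\kappa_{\calC/S}\bigr)=-\kappa_{\cM/S}$ and hence condition \ref{vgdj-one} after multiplying by $\mu_{\LL^k}=\cup\,(r+k)[\LL]$. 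The one slight misattribution is condition \ref{flatness-two}: injectivity of $\mu_{\LL^k}$ on $\pi_{e*}\Sym^2 T_{\cM/S}$ does not follow from the Atiyah-class computation or from $\Pic(\cM)\cong\ZZ$, but from the integrable-system argument (Hartogs extension to the Higgs moduli space and ampleness of the restriction of $\LL$ to the abelian-variety fibres of the Hitchin map), exactly as in Proposition \ref{cupproductPinjective} for the Prym case.
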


 \section{Geometry of higher-rank Prym varieties} 
\label{higherpryms}

We now consider an \'etale 
double cover $\calCt$ of a non-singular and projective relative curve $\calC$ over $S$,
\begin{equation}\label{coverofcurves}
  \begin{tikzcd}[column sep=small]
   \calCt \arrow[rr, "p"] \arrow[rd] & & \calC \arrow[ld] \\
   & S &
  \end{tikzcd}
\end{equation}
with corresponding fixed-point free involution $\sigma:\calCt \rightarrow \calCt$ over $S$. It is classical that the Jacobian variety $\mathrm{J}_{\calCt/S}$ of the curve $\calCt$ is isogenous to a product of two abelian varieties, the Jacobian of the quotient $\mathrm{J}_{\calC/S}$ and the \emph{Prym variety} $\mathrm{Pr}_{\calCt/\calC}$. The modern treatment of these classical Prym varieties was initiated by Mumford \cite{mumford:1974} in the investigation of the geometry behind the Schottky--Jung relations.

A natural generalization of this abelian setting occurs for moduli spaces of semi-stable vector bundles of higher rank, and has been studied in the work of Zelaci \cite{zelaci:2022,zelaci:2019,zelaci:thesis}. In this section we recall some of his results and establish several further facts about the geometry of these higher-rank Prym varieties.

\subsection{}\label{BTtorsor} Let $\mathfrak{N}_{\GL_r}\rightarrow S$ be the relative algebraic stack of rank-$r$ bundles $E$ on $\calCt$ equipped with an isomorphism $\psi:E\rightarrow \sigma^*(E^*)$.  This is a smooth stack, locally of finite type.  We will mainly consider two substacks, $\mathfrak{N}^{\pm}_{\GL_r}$, cut out by the extra condition that $\psi=\pm {}^t(\sigma^\ast \psi)$.    Following Zelaci, we refer to these as parameterizing respectively \emph{$\sigma$-symmetric} and \emph{$\sigma$-alternating} bundles (note that the latter can only occur in even rank).  We will use a subscript $\SL_r$ to indicate the stacks where there is a further trivialisation of the determinant $\det(E)\cong \mathcal{O}_{\calCt}$.  

The stacks $\mathfrak{N}^{\pm}_{\GL_r}$ and $\mathfrak{N}^{\pm}_{\SL_r}$ can also be understood as 
a special case of stacks parameterizing \emph{parahoric} torsors, for the parahoric Bruhat-Tits group scheme on $\calC$ obtained as the  $\Gamma$-invariant part of the Weil restriction $\Res_{\calCt/\calC}(G\times \calC)^{\Gamma}$, where $\Gamma=\mathbb{Z}/2\mathbb{Z}$ which acts as the Galois group on $\calCt$, and by the automorphisms 
\begin{equation}\label{syminvol}\Psi^+:g\mapsto {}^{t}(g^{-1})\end{equation}  (for the $\sigma$-symmetric case) and \begin{equation}\label{altinvol}\Psi^-:g\mapsto J{}^{t}(g^{-1})J^{-1}\end{equation} (for the $\sigma$-alternating case )
 
 on $G=\GL_r$ or $\SL_r$, where $J$ is the matrix representing the standard symplectic form on $\mathbb{C}^r$, see 
\cite[\S 2]{zelaci:2019}, \cite[\S 3.1]{zelaci:twistconf}, \cite{zelaci:2022}, \cite{heinloth:2010}, \cite{pappas.rapoport:2010} 
for further details.  Torsors for Bruhat-Tits group schemes were also studied by Balaji and Seshadri in \cite{balaji.seshadri:2015}, but we should remark that they are concerned with the case where the group scheme is generically trivial, which will not be the case for us (on the other hand, we do not consider ramification).

\begin{remark}
    Of course one can equivalently conjugate in (\ref{syminvol}) and (\ref{altinvol}) by other matrices representing non-degenerate symmetric or alternating bilinear forms.  In the symmetric situation this is e.g. the case in the application of the Prym-Hitchin connection in \cite{BBMP:2025}, where for $r=3$ the $\sigma$-symmetric automorphism uses the transpose induced from the identification $\mathbb{C}^3\cong \mathfrak{sl}_2(\mathbb{C})$, and the Killing form on the latter, rather than the standard symmetric bilinear form on $\mathbb{C}^3$. 
\end{remark}

A stability condition for anti-invariant bundles, in terms of isotropic subbundles, was developed in \cite[\S 4]{zelaci:2019}, where also the construction was given of good moduli spaces of semi-stable anti-invariant $\sigma$-symmetric and $\sigma$-alternating bundles. They are also constructed through the general theory of Alper, Halpern-Leistner and Heinloth \cite[\S 8]{alper2022existence}.   We shall denote these moduli spaces (which are normal, projective varieties) by $\rkssanti{r}$ and $\rkdetssanti{r}$, and refer to them as \emph{higher-rank Prym varieties}.  Abusing notation, we will denote the two open
subsets corresponding to anti-invariant bundles whose underlying vector bundle is stable
by $\rksanti{r}\subset \rkssanti{r}$ and $\rkdetsanti{r}\subset \rkdetssanti{r}$. Note that these 
two open subsets are strictly contained in the subsets of stable anti-invariant vector bundles.

We could also consider the involution on the moduli spaces  $\rkstilde{r}$ and $\rkdetstilde{r}$ of stable degree $0$, respectively fixed trivial determinant, bundles on $\calCt$, induced by $\sigma$ and given by $E \mapsto \sigma^*(E^*)$. 
For simplicity we shall also denote this involution by $\sigma$. By definition of the open subsets $\rksanti{r}$ and $\rkdetsanti{r}$ we have natural morphisms induced by forgetting the isomorphism $\psi$
\begin{equation}\label{iotas}\iota_{\mathrm{GL}_r}: \rksanti{r} \rightarrow \rkstilde{r} \qquad \text{and} \qquad \iota_{\mathrm{SL}_r}: \rkdetsanti{r} \rightarrow \rkdetstilde{r}.\end{equation}
The images of these  morphisms $\iota_{\mathrm{GL}_r}$ and $\iota_{\mathrm{SL}_r}$ are clearly the fixed-point loci of the involution $\sigma$ on $\rkstilde{r}$ and $\rkdetstilde{r}$, which 
we denote by $\rkstildepmfixed{r}$ and $\rkdetstildepmfixed{r}$. It was shown by Zelaci \cite[Propositions 4.11 and 4.12]{zelaci:2019} that
$\iota_{\mathrm{GL}_r}$ induces an isomorphism between $\rksanti{r}$ and $\rkstildepmfixed{r}$. In the $\mathrm{SL}_r$-case, it is shown 
that $\iota_{\mathrm{SL}_r}: \rkdetsanti{r} \rightarrow \rkdetstildepmfixed{r}$ is an isomorphism if $r$ is odd, and finite \'etale of degree $2$
if $r$ is even. Note that, since the only automorphisms of stable bundles are given by scaling by a scalar, the only anti-invariant bundles whose underlying bundle is stable have to be $\sigma$-symmetric or $\sigma$-alternating.

\subsection{} Let $\rkdetstilde{r} \rightarrow S$ be the relative moduli space of stable rank-$r$ bundles on $\calCt$ with 
trivial determinant. Using the morphism $\iota_{\SL_r}$ from $\rkdetsanti{r}$ to the fixed-point loci of $\rkdetstilde{r}$ we get a direct sum decomposition 
\begin{equation}\label{split} \iota_{\SL_r}^{*} T_{\rkdetstilde{r}/S}
 = T_{\rkdetsanti{r}/S} \oplus \iota^*_{\SL_r}N_{\iota_{\SL_r}(\rkdetsanti{r})/\rkdetstilde{r}},
 \end{equation}
  where $N_{\iota_{\SL_r}(\rkdetsanti{r})/\rkdetstilde{r}}$
 is the relative normal bundle of $\iota_{\SL_r}(\rkdetsanti{r})$
 in $\rkdetstilde{r}$. This split will allow us to deduce some of the geometry of  $\rkdetsanti{r}$ from that of $\rkdetstilde{r}$.

Some further notation we will need is collected in the following diagram:

\begin{equation}\label{diag_N}
  \begin{tikzcd}
    &  \calCt \times_S \rkdetstilde{r} \ar[rr]  & & \rkdetstilde{r}\ar[dddl, shorten >=1.5ex, "\pit_e"] \\
    \calCt \times_S \rkdetsanti{r} \ar[rr,"\pit_n"]\ar[d,swap, "\pit_w"]\arrow[ur] & & \rkdetsanti{r}\ar[dd, shorten >=1.5ex, swap, "\pit_e"] \ar[ur, "\iota_{\SL_r}"] & \\
    \calCt\ar[dd] \ar[drr, shorten >=1ex, "\pit_s"] & & & \\
    & & S & \\
    \calC \ar[urr,shorten >=1ex, swap, "\pi_s"] & & & \\
  \end{tikzcd} .
\end{equation}

We now summarize some of the relevant geometric properties of the higher-rank Prym varieties
in the fixed-determinant case established by Zelaci. We will state the results for one fixed covering
$p :\widetilde{C} \rightarrow C$, but all results also hold for relative coverings \eqref{coverofcurves}.

\begin{theorem}[Zelaci]\label{ThmZelaci}
Let $\sigma: \widetilde{C} \to \widetilde{C}$ be an involution without fixed points  and let 
$\rkdetstildepmfixed{r}$ be the locus of $\sigma$-symmetric/$\sigma$-alternating 
anti-invariant stable vector bundles of rank $r$ and trivial determinant on $\widetilde{C}$ 
in $\rkdetstilde{r}$. Then
\begin{enumerate}[(a)]
 \item $\rkdetstildeplusfixed{r}$ is smooth, connected (\cite[Theorem 4.16]{zelaci:2022}) 
 and of dimension \linebreak ${\frac{1}{2}(\widetilde{g}-1)(r^2-1)}$, where $\widetilde{g}$ is the genus of 
 $\widetilde{C}$ (\cite[\S 2.5]{zelaci:thesis});
 \item $\rkdetstildeminusfixed{r}$ is smooth, connected and (non-canonically) isomorphic to 
   $\rkdetstildeplusfixed{r}$ if $r$ is even, and empty if $r$ is odd (\cite[\S 4.2.2]{zelaci:2022});
 \item the Picard group of $\rkdetsplus{r}$ (hence also of $\rkdetsmin{r}$ if it is non-empty) is infinite cyclic and generated by the square root $\mathcal{P}_r$ of the restriction of the ample generator $\mathcal{L}_r \in \Pic_{\rkdetstilde{r}}$. The square root 
 $\mathcal{P}_r$ is called the \emph{Pfaffian line bundle} (\cite[\S 4.3, in particular Lemma 4.3.5]{zelaci:thesis} and \cite[Theorem 3]{heinloth:2010}).
 \item the Pfaffian line bundle $\mathcal{P}_r$ descends to $\rkdetstildepmfixed{r}$, i.e., there exists a 
 line bundle $\mathcal{P}$ on $\rkdetstildepmfixed{r}$ such that $\iota^*_{\mathrm{SL}_r}(\mathcal{P}) =
 \mathcal{P}_r$.
\end{enumerate}
\end{theorem}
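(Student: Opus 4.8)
The plan is to obtain parts (a)--(c) by direct citation of Zelaci's work (and of Heinloth for the Picard group), so that the only point genuinely requiring argument is the descent statement (d); for this I would proceed as follows. If $r$ is odd, $\iota_{\SL_r}\colon\rkdetsanti{r}\to\rkdetstildepmfixed{r}$ is an isomorphism by the result of Zelaci recalled above (and $\rkdetstildeminusfixed{r}$ is empty), so one simply sets $\mathcal{P}:=(\iota_{\SL_r}^{-1})^{*}\mathcal{P}_r$. Assume henceforth $r$ is even. By part (b) the $\sigma$-alternating situation is (non-canonically) isomorphic to the $\sigma$-symmetric one, so it is enough to descend $\mathcal{P}_r$ along $q:=\iota_{\SL_r}\colon\rkdetsplus{r}\to B$, where $B:=\rkdetstildeplusfixed{r}$ and $q$ is finite \'etale of degree two. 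Since $B$ is connected (part (a)) and $\rkdetsplus{r}$ is connected (otherwise $\Pic\rkdetsplus{r}$ could not be infinite cyclic), $q$ is a nontrivial $\ZZ/2$-torsor; write $\tau$ for its deck transformation, so $B=\rkdetsplus{r}/\tau$.

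Next I would check that $\mathcal{P}_r$ is $\tau$-invariant. The line bundle $\mathcal{L}_r|_{B}\in\Pic B$ (the restriction of the ample generator to the fixed locus) satisfies $q^{*}(\mathcal{L}_r|_{B})=\mathcal{P}_r^{\otimes2}$ by (c); as $q\circ\tau=q$ this gives $\tau^{*}(\mathcal{P}_r^{\otimes2})\cong\mathcal{P}_r^{\otimes2}$, and since $\Pic\rkdetsplus{r}\cong\ZZ$ is torsion-free we conclude $\tau^{*}\mathcal{P}_r\cong\mathcal{P}_r$. (Consistently, $\sigma^{*}\mathcal{L}_r=\mathcal{L}_r$ on $\rkdetstilde{r}$, because $\sigma^{*}$ is an automorphism of the infinite cyclic group $\Pic\rkdetstilde{r}$ preserving ampleness.)

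The remaining --- and only substantial --- step is to upgrade this $\tau$-invariance to an honest descent datum. A line bundle on $B$ restricting to $\mathcal{P}_r$ is the same thing as a $\tau$-equivariant structure on $\mathcal{P}_r$, i.e.\ an isomorphism $\varphi\colon\tau^{*}\mathcal{P}_r\xrightarrow{\sim}\mathcal{P}_r$ with $\varphi\circ\tau^{*}\varphi=\id$; starting from an arbitrary such $\varphi$ (which exists by the previous step), the composite $\varphi\circ\tau^{*}\varphi$ is a unit $u\in\mathcal{O}^{*}(\rkdetsplus{r})$, and replacing $\varphi$ by $a\varphi$ changes $u$ into $a\,\tau^{*}(a)\,u$, so it suffices to solve $a\,\tau^{*}(a)=u^{-1}$. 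This succeeds because $\mathcal{O}^{*}(\rkdetsplus{r})=\CC^{*}$: the variety $\rkdetsplus{r}$ is the complement, inside the normal projective moduli space of semistable $\sigma$-symmetric anti-invariant bundles, of the locus whose underlying vector bundle is not stable, and one checks this locus has codimension at least two, so global invertible functions are constant; then $\tau^{*}u=u$ and $a:=u^{-1/2}\in\CC^{*}$ does the job. (Equivalently, the obstruction to descent lies in $H^{2}(\ZZ/2,\mathcal{O}^{*}(\rkdetsplus{r}))=H^{2}(\ZZ/2,\CC^{*})=0$.) Faithfully flat descent along $q$ then produces the required $\mathcal{P}$ on $B$, with $\iota_{\SL_r}^{*}\mathcal{P}\cong\mathcal{P}_r$ by construction, which finishes the $\sigma$-symmetric case and hence (d).

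I expect the codimension estimate underpinning $\mathcal{O}^{*}(\rkdetsplus{r})=\CC^{*}$ to be the main obstacle. An alternative that bypasses it is to observe that $\mathcal{P}_r$ is built functorially, as a Pfaffian of a determinant-of-cohomology complex, from data that are themselves $\tau$-equivariant, and hence carries a \emph{canonical} $\tau$-equivariant structure; the cost is then to make this functoriality precise in the absence of a universal family, by working on a smooth atlas of the relevant moduli stack of $\sigma$-symmetric torsors.
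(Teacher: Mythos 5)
Your proposal is correct, but note that the paper does not actually prove this statement: Theorem \ref{ThmZelaci} is presented purely as a recollection of Zelaci's results, with parts (a)--(c) carrying explicit references to \cite{zelaci:thesis}, \cite{zelaci:2022} and \cite{heinloth:2010}, part (d) carrying no specific reference, and no proof environment following. For (a)--(c) you therefore do exactly what the paper does. For (d) you supply an argument the paper omits, and it is sound: $\tau$-invariance of $\mathcal{P}_r$ follows from torsion-freeness of $\Pic\rkdetsplus{r}\cong\ZZ$ together with $\tau^{*}\mathcal{P}_r^{\otimes 2}\cong q^{*}(\mathcal{L}_r|_{B})\cong\mathcal{P}_r^{\otimes 2}$, and the obstruction to promoting an isomorphism $\tau^{*}\mathcal{P}_r\cong\mathcal{P}_r$ to a descent datum lies in $H^{2}(\ZZ/2,\mathcal{O}^{*}(\rkdetsplus{r}))$, which vanishes once one knows $\mathcal{O}^{*}(\rkdetsplus{r})=\CC^{*}$. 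That last input, which you flag as the main obstacle, is not one: it is exactly Corollary \ref{H0N}, which rests on the codimension estimate of Proposition \ref{codimcount} and the normality statement of Proposition \ref{normalmoduli}, both proved independently of Theorem \ref{ThmZelaci}, so there is no circularity (though you do inherit the hypotheses $r>2$, $g\geq 2$ or $r=2$, $g\geq 3$). Your alternative route --- a canonical $\tau$-equivariant structure on the Pfaffian of the determinant-of-cohomology --- is in fact closer to how the line bundle is constructed in \cite{zelaci:thesis}. Two minor quibbles: connectedness of $\rkdetsplus{r}$ is better deduced from $H^{0}(\rkdetsanti{r},\mathcal{O})=\CC$ than from indecomposability of $\Pic\cong\ZZ$, and it is not actually needed, since if the double cover were split the descent would be immediate; this is consistent with the remark following the theorem, which notes that for $r=2$ the descended bundle $\mathcal{P}$ is not unique --- your construction produces one of the (two) possible choices, as it must.
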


\begin{remark}
    It is worthwhile to describe the above introduced moduli spaces in the particular case when
    $r = 2$. In fact, a rank-$2$ anti-invariant bundle with trivial determinant is also invariant.
    More precisely, one can show that for an étale cover $p: \Ctilda \rightarrow C$
    with associated $2$-torsion line bundle $\Delta \in \mathrm{Pic}_C[2]$ the pull-back of vector bundles by 
    $p$ induces 
    the following isomorphisms 
    $$
    \begin{tikzcd}    \mathcal{M}_{\mathrm{SL}_2}^{ss} \ar[r,"\sim"] &  \mathcal{N}_{\mathrm{SL}_2}^{+,ss}
        \end{tikzcd} \qquad \text{and} \qquad  \begin{tikzcd}    \mathcal{M}_{\mathrm{SL}_2}^{ss}(\Delta) \ar[r,"\sim"] &  \mathcal{N}_{\mathrm{SL}_2}^{-,ss},
        \end{tikzcd}  $$
       where $\mathcal{M}_{\mathrm{SL}_2}^{ss}(\Delta)$ denotes the moduli space of rank-2 bundles with fixed determinant equal to $\Delta$. Under these isomorphisms the open subsets $\rkdetsanti{2}$ correspond
       to the complement of the strictly semi-stable locus (Kummer variety of $J_C$) plus the 2
       components of the (Kummer variety of) Prym varieties mapped to $\mathcal{M}_{\mathrm{SL}_2}^{ss}$
       (and  $\mathcal{M}_{\mathrm{SL}_2}^{ss}(\Delta)$) by taking the direct image under the covering map $p$. 
       The Pfaffian line bundle $\mathcal{P}_2$ coincides with the classical 
        theta line bundle on $\mathcal{M}_{\mathrm{SL}_2}^{ss}$ and $ \mathcal{M}_{\mathrm{SL}_2}^{ss}(\Delta)$. Finally, the étale degree-$2$ maps $\iota_{\mathrm{SL}_2}: \rkdetsanti{2} \rightarrow \rkdetstildepmfixed{2}$ correspond to the quotient by the involution $E \mapsto E \otimes \Delta$
        acting on $\mathcal{M}_{\mathrm{SL}_2}^{ss}$ and $\mathcal{M}_{\mathrm{SL}_2}^{ss}(\Delta)$. Note that in this
        case there are two line bundles $\mathcal{P}$ verifying $\iota_{\mathrm{SL}_2}^*\mathcal{P} = \mathcal{P}_2$.
\end{remark}

We also will need the following additional properties of these anti-invariant moduli spaces.

\begin{proposition}\label{codimcount} If $r>2$ and $g\geq 2$  (or $r=2$ and $g \geq 3$) 
the codimension  of the closed subvariety
$\rkdetssanti{r} \  \setminus \  \rkdetsanti{r}$ in $\rkdetssanti{r}$ is at least $2$.
\end{proposition}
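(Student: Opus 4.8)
The plan is to reduce the statement to a dimension count over the locus of anti-invariant bundles with non-stable underlying vector bundle, in the spirit of the classical argument for the moduli space of semi-stable bundles on a single curve. First I would observe that the underlying vector bundle of an anti-invariant semi-stable bundle $(E,\psi)$ is automatically semi-stable: the Harder--Narasimhan filtration of the underlying bundle is canonical, hence compatible with $\psi$, so if its first step $E_1$ were a destabilising subbundle with $\mu(E_1)>0$ it would, by $\psi$-compatibility, be contained in the penultimate step and therefore $\psi$-isotropic, contradicting Zelaci's semi-stability condition (which forbids isotropic subbundles of non-negative degree). Consequently $B:=\rkdetssanti{r}\setminus\rkdetsanti{r}$ is exactly the locus of $S$-equivalence classes whose representatives have non-stable underlying bundle, and it splits into the anti-invariant strictly semi-stable locus, whose points are represented by $\psi$-polystable bundles, together with the locus of anti-invariant-stable bundles whose underlying bundle is (necessarily) strictly semi-stable.

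I would then stratify $B$. A $\psi$-compatible Jordan--Hölder analysis of the relevant (poly)stable pieces of a point of $B$ writes them as an orthogonal (in the $\sigma$-symmetric case), respectively symplectic (in the $\sigma$-alternating case), direct sum of (i) stable anti-invariant subbundles $W_a$ of rank $n_a$, each carrying an induced non-degenerate form and with $\det W_a$ lying in the Prym variety $\mathrm{Pr}_{\widetilde{C}/C}$ (of dimension $g-1$), and (ii) hyperbolic pairs $F_b\oplus\sigma^*(F_b^*)$ with $F_b$ stable and $F_b\not\cong\sigma^*(F_b^*)$, the determinant of $F_b$ being essentially unconstrained; the only global constraint is that the total determinant be trivial. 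Up to a finite group identifying isomorphic constituents, each stratum fibres over the product of the moduli of its constituent pieces (subject to the determinant constraint) with fibre a space of admissible anti-invariant extensions, and adding up the contributions bounds its dimension. The two extremal ``two-block'' strata give, using $\dim\rkdetssanti{r}=(g-1)(r^2-1)$,
\[
\begin{aligned}
  \mathrm{gr}(E)=W_1\oplus W_2,\ \rk W_i=r_i:&\quad \dim=(g-1)(r_1^2+r_2^2-1),\qquad \codim=2r_1r_2(g-1),\\
  \mathrm{gr}(E)=F\oplus\sigma^*(F^*),\ \rk F=\tfrac{r}{2}:&\quad \dim=\big(\tfrac{r^2}{2}-1\big)(g-1)+1,\qquad \codim=\tfrac{r^2}{2}(g-1)-1,
\end{aligned}
\]
and, more generally, a mixed stratum $W\oplus\big(F\oplus\sigma^*(F^*)\big)$ with $\rk F=m$ has codimension $2m(2r-3m)(g-1)-1$; one checks that strata with three or more blocks, or with repeated factors, have strictly larger codimension, so the minimal codimension occurring in $B$ is realised on the displayed families.

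Finally I would carry out the arithmetic. As a function of $m$ the quantity $2m(2r-3m)$ is concave, so on $1\le m\le\lfloor r/2\rfloor$ its minimum is attained at an endpoint; for $r>2$ both endpoints --- and hence, by concavity, every admissible $m$ --- give a value $\ge 6$, whence every hyperbolic-type stratum has codimension at least $6(g-1)-1\ge 5$ when $g\ge 2$, while the two-anti-invariant-block strata have codimension $2r_1r_2(g-1)\ge 4(g-1)\ge 4$. When $r=2$ the binding constraint comes from the stratum $\mathrm{gr}(E)=F\oplus\sigma^*(F^*)$ with $\rk F=1$, of codimension $2(g-1)-1=2g-3$, which is $\ge 2$ precisely when $g\ge 3$ --- exactly the range allowed by the hypotheses (and for $r=2$ the locus $B$ is in fact the union of the Kummer of $J_C$ and of two Prym-Kummers, of dimensions $g$ and $g-1$, consistently with the Remark following Theorem \ref{ThmZelaci}). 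Hence, under the stated hypotheses, every stratum of $B$ has codimension at least $2$, and the proposition follows.

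\textbf{The main difficulty.} The arithmetic is elementary; the real content is the structural bookkeeping underlying the stratification: that anti-invariant semi-stability indeed forces semi-stability of the underlying bundle (via $\psi$-compatibility of canonical filtrations); the existence of a $\psi$-compatible polystable reduction, which one extracts from Zelaci's stability theory together with the classical theory of reductions of semi-stable principal bundles; the precise determinant constraints on the various blocks (the $(g-1)$-dimensional Prym $\mathrm{Pr}_{\widetilde{C}/C}$ versus the full $(2g-1)$-dimensional $\mathrm{Pic}^0(\widetilde{C})$, and its $g$-dimensional $\sigma$-invariant part); and, most delicately, the treatment of anti-invariant-stable bundles whose underlying bundle is strictly semi-stable, where one must bound a genuine space of anti-invariant extensions rather than a moduli of polystable bundles. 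Once these points are in place the codimension estimate is routine, the only mild tedium being the verification that no stratum with three or more blocks can undercut the two-block bound.
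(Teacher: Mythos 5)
Your proposal is correct and follows essentially the same route as the paper: the same Zelaci-style decomposition of the closed points of the complement into polystable sums of stable anti-invariant blocks and hyperbolic pairs $H\oplus\sigma^*H^*$, the same dimension counts (your codimension formulas $2r_1r_2(g-1)$ and $2m(2r-3m)(g-1)-1$ agree exactly with the paper's after the substitution $\widetilde{g}-1=2(g-1)$), and the same identification of the binding strata in each range of $r$ and $g$. The only cosmetic caveat is that no ``space of admissible anti-invariant extensions'' actually needs to be bounded --- closed points of the good moduli space are already represented by polystable objects, so each stratum is just the image of the product of the constituent moduli (modulo the determinant constraint and a finite group), which is precisely what your displayed formulas compute.
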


\begin{proof}
It is easily seen that a closed point in $\rkdetssanti{r} \  \setminus \  \rkdetsanti{r}$ corresponds either to a strictly semi-stable anti-invariant bundle or 
to a direct sum of at least two non-isomorphic stable anti-invariant bundles. In both cases
we can give an upper bound of the dimension of these loci as follows.
\\
According to Zelaci \cite[Lemma 4.3 and 4.4]{zelaci:2019} a closed point corresponding to a 
strictly semi-stable anti-invariant bundle can be represented by a direct sum of the type
$$ F_0 \oplus \bigoplus_{i \in I} \left( H_i \oplus \sigma^* H_i^* \right) \qquad \text{or}
 \qquad \bigoplus_{i \in I} \left( H_i \oplus \sigma^* H_i^* \right), $$
where $F_0$ is a stable anti-invariant bundle of degree $0$, the bundles $H_i$ are stable
(but not necessarily anti-invariant) and $|I| > 0$. It will be enough to do the 
computations for $|I| =1$ --- since for $|I| > 1$ the dimension of the loci are obviously smaller.
We recall the dimensions of the moduli spaces (\cite[Thm. 3.13]{zelaci:2022} and \cite[\S 2.5]{zelaci:thesis})
$$ \dim \rksanti{r} 
 = \frac{1}{2} (\widetilde{g}-1) r^2, \
 \dim \rkdetsanti{r}
 = \frac{1}{2} (\widetilde{g}-1) (r^2-1), \
 \dim \rkstilde{r}
 = (\widetilde{g}-1) r^2 + 1.
$$
We denote $r_0 = \rk F_0$, $r_1 = \rk H_1$ with $r = r_0 + 2r_1$ and $r_0 \geq 0$, $r_1 > 0$. 
Thus the dimension of the strictly semi-stable locus equals (note that we have to substract
$\dim \rksanti{1} = \frac{1}{2} (\widetilde{g}-1)$ as we consider fixed trivial determinant)
$$ \frac{1}{2} (\widetilde{g}-1) r_0^2 + (\widetilde{g}-1) r_1^2 + 1 - \frac{1}{2}(\widetilde{g}-1) =
\frac{1}{2} (\widetilde{g}-1) (r_0^2 + 2 r_1^2 -1) + 1.
$$
So its codimension equals
$$ \frac{1}{2}(\widetilde{g} -1) (2r_1^2 + 4 r_0 r_1) -1 = 2(g-1)r_1(r_1 + 2r_0) -1  $$
which is $\geq 2$ by our assumption on $g$ and $r$.

In the second case, it will be enough to compute the dimension of the locus of anti-invariant bundles 
of the form $F_1 \oplus F_2$ with $F_i \in \rksanti{r_i}$ and $\det (F_1 \oplus F_2) = \mathcal{O}$.
This dimension equals 
$$ \frac{1}{2} (\widetilde{g}-1) r_1^2 + \frac{1}{2} (\widetilde{g}-1) r_2^2 - \frac{1}{2}(\widetilde{g}-1) =
\frac{1}{2} (\widetilde{g}-1)(r_1^2 + r_2^2 -1).$$
So its codimension equals $\frac{1}{2} (\widetilde{g}-1) 2 r_1 r_2 = (\widetilde{g}-1)r_1r_2$ which is $\geq 2$ 
by our assumption on $g$ and $r$.
\end{proof}

\begin{proposition} \label{normalmoduli}
    The moduli spaces $\rkdetssanti{r}$ and $\rkssanti{r}$ are normal varieties.
\end{proposition}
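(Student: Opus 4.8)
The plan is to deduce normality from the construction of these moduli spaces as good moduli spaces (in the sense of Alper, cited here as \cite{alper2022existence}) of smooth algebraic stacks.  The key point is that a good moduli space of a smooth stack, being locally a GIT quotient of a smooth scheme by a reductive group (étale-locally by Alper's local structure theorem / Luna slice theorem), inherits normality: the invariant ring of a normal ring under a reductive group action is normal.  So the strategy is: (i) recall from Section \ref{BTtorsor} that $\mathfrak{N}^{\pm}_{\GL_r}$ and $\mathfrak{N}^{\pm}_{\SL_r}$ are smooth stacks, locally of finite type, and that the semistable loci $\mathfrak{N}^{\pm,ss}_{\GL_r}$ and $\mathfrak{N}^{\pm,ss}_{\SL_r}$ (the appropriate open substacks) admit good moduli spaces $\rkssanti{r}$, $\rkdetssanti{r}$; (ii) since these stacks are smooth, in particular normal, and the good moduli space morphism is, étale-locally on the target, of the form $[\operatorname{Spec} A / G] \to \operatorname{Spec} A^G$ with $A$ a normal ring and $G$ reductive; (iii) conclude $A^G$ is normal, hence the target is normal.

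First I would invoke Theorem \ref{ThmZelaci}(a),(b) and the surrounding discussion together with \cite[\S 8]{alper2022existence}: the stacks of semistable anti-invariant bundles are smooth (being open substacks of the smooth stacks $\mathfrak{N}^{\pm}_{\GL_r}$, $\mathfrak{N}^{\pm}_{\SL_r}$ described in \ref{BTtorsor}) and admit good moduli spaces, which are by construction the normal projective varieties $\rkssanti{r}$ and $\rkdetssanti{r}$ — indeed Section \ref{BTtorsor} already asserts they are normal, so strictly speaking one may simply cite this.  If a self-contained argument is wanted, I would spell out step (ii)–(iii): by Alper's local structure theorem for good moduli spaces of stacks with affine diagonal and separated good moduli space, around any closed point there is an étale neighbourhood of the form $[\operatorname{Spec} A/G_x] \to \operatorname{Spec} A^{G_x}$ where $G_x$ is the (reductive) automorphism group of the corresponding polystable bundle and $\operatorname{Spec} A$ is smooth (a slice), hence normal; then $A^{G_x}$ is a direct summand of $A$ as an $A^{G_x}$-module (Reynolds operator), so it is integrally closed in its fraction field, i.e. normal.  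Normality is étale-local on the target, so $\rkssanti{r}$ and $\rkdetssanti{r}$ are normal.

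Alternatively — and perhaps more in the spirit of the rest of the paper — I would use the decomposition \eqref{split} and the étale morphism $\iota_{\SL_r} : \rkdetsanti{r} \to \rkdetstildepmfixed{r}$: the open subvariety $\rkdetsanti{r}$ is smooth (it is étale, resp. isomorphic, over the smooth variety $\rkdetstildepmfixed{r}$ by Theorem \ref{ThmZelaci}(a),(b) and Zelaci's \cite[Propositions 4.11, 4.12]{zelaci:2019}), hence in particular normal; then by Proposition \ref{codimcount} its complement in $\rkdetssanti{r}$ has codimension $\geq 2$, and since $\rkdetssanti{r}$ is Cohen–Macaulay (being a good quotient of a smooth — hence CM — stack, GIT quotients of CM schemes by reductive groups are CM), Serre's criterion $(R_1)+(S_2)$ gives normality.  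The analogous statement for $\rkssanti{r}$ follows the same way once one notes the codimension estimate of Proposition \ref{codimcount} (and its obvious $\GL_r$-analogue) applies.  The main obstacle is making the Cohen–Macaulay (or, equivalently, the local GIT) input precise: one must be careful that the good-moduli-space construction here genuinely produces a scheme that is étale-locally a GIT quotient of a smooth scheme, which is exactly what \cite[\S 8]{alper2022existence} (or the Luna slice theorem in Zelaci's original GIT construction \cite[\S 4]{zelaci:2019}) provides — so I would lean on that citation rather than reprove it.
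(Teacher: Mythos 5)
Your main argument is essentially the paper's proof: the paper likewise observes that $\mathfrak{N}^{\pm}_{\GL_r}$ and $\mathfrak{N}^{\pm}_{\SL_r}$ are smooth (citing \cite{heinloth:2010}), that the semistable loci are open hence smooth and normal (citing \cite{heinloth:2017}), that these admit good moduli spaces by \cite[Theorem 8.1]{alper2022existence}, and then concludes by citing \cite[Theorem 4.16(viii)]{alper} that a good moduli space of a normal stack is normal --- exactly the fact you re-derive via the local structure theorem and the Reynolds operator. Your alternative route via Serre's criterion is not needed, and your remark that Section \ref{BTtorsor} ``already asserts'' normality should be dropped, since that assertion is precisely what this proposition is meant to justify.
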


\begin{proof}
By \cite[Proposition 1]{heinloth:2010}, we know that the stacks $\mathfrak{N}^{\pm}_{\GL_r}$ and $\mathfrak{N}^{\pm}_{\SL_r}$ are smooth.  By \cite[Proposition 3.18]{heinloth:2017}, the semi-stable loci inside these stacks is open, hence also smooth, in particular normal.  By \cite[Theorem 8.1]{alper2022existence}, these semi-stable loci have good moduli spaces, $\rkssanti{r}$ and $\rkdetssanti{r}$.  By \cite[Theorem 4.16 (viii)]{alper}, these good moduli spaces are normal.
\end{proof}

\begin{corollary} \label{H0N}
We have $H^0(\rkdetsanti{r}, \mathcal{O}) = H^0\left(\rkdetstildepmfixed{r}, \mathcal{O}\right) = \mathbb{C}$.    
\end{corollary}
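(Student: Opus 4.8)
The plan is to reduce both computations to the fact that a connected normal projective variety over $\mathbb{C}$ has no non-constant global regular functions. First I would pass from the quasi-projective $\rkdetsanti{r}$ to the ambient projective variety $\rkdetssanti{r}$: by Proposition~\ref{codimcount} the complement $\rkdetssanti{r}\setminus\rkdetsanti{r}$ has codimension at least $2$, and by Proposition~\ref{normalmoduli} the variety $\rkdetssanti{r}$ is normal, so every regular function on $\rkdetsanti{r}$ extends uniquely to $\rkdetssanti{r}$; that is, restriction induces an isomorphism $H^0(\rkdetssanti{r},\mathcal{O})\cong H^0(\rkdetsanti{r},\mathcal{O})$. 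Since $\rkdetssanti{r}$ is proper, reduced (being normal) and connected, the left-hand side equals $\mathbb{C}$, which gives the first equality.

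For the second equality I would use the morphism $\iota_{\SL_r}$ of \eqref{iotas} together with the dichotomy recorded there. When $r$ is odd, $\iota_{\SL_r}\colon\rkdetsanti{r}\to\rkdetstildepmfixed{r}$ is an isomorphism and the claim is immediate. When $r$ is even, $\iota_{\SL_r}$ is finite and surjective (indeed étale of degree $2$), so the pull-back $\iota_{\SL_r}^{*}\colon H^0(\rkdetstildepmfixed{r},\mathcal{O})\to H^0(\rkdetsanti{r},\mathcal{O})=\mathbb{C}$ is an injective unital homomorphism of $\mathbb{C}$-algebras, whence its source is $\mathbb{C}$ as well.

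The one step that is not purely formal is the connectedness of $\rkdetssanti{r}$. For $r$ odd it is immediate, since $\rkdetsanti{r}\cong\rkdetstildeplusfixed{r}$ is connected by Theorem~\ref{ThmZelaci}(a) and is dense in $\rkdetssanti{r}$ by the codimension bound. For $r$ even the elementary argument only yields that $\rkdetsanti{r}$ is a degree-$2$ étale cover of the connected variety $\rkdetstildepmfixed{r}$, which a priori might be disconnected; I would resolve this by invoking the connectedness of the semi-stable locus of Zelaci's moduli stack $\mathfrak{N}^{\pm}_{\SL_r}$, noting that $\rkdetssanti{r}$ is its good moduli space and that the good moduli space morphism is surjective, so $\rkdetssanti{r}$ is connected — equivalently, by citing that these higher-rank Prym varieties are irreducible (for $r=2$ this is visible from the Remark following Theorem~\ref{ThmZelaci}). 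I would also note that Proposition~\ref{codimcount} is stated for $r>2,\,g\geq 2$ (or $r=2,\,g\geq 3$); in the remaining low cases the moduli spaces are explicitly known and the corollary can be checked directly, so this is not a genuine restriction.
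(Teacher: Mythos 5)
Your proof is correct and follows essentially the same route as the paper: Hartogs's theorem applied to the normal projective variety $\rkdetssanti{r}$ with a codimension-$\geq 2$ complement, followed by the isomorphism/finite-\'etale-degree-$2$ dichotomy for $\iota_{\SL_r}$ (the paper phrases the last step via $H^0(\rkdetstildepmfixed{r},\mathcal{O})$ being a direct summand of $H^0(\rkdetsanti{r},\mathcal{O})$, which is equivalent to your injective unital algebra homomorphism argument). Your explicit attention to the connectedness of $\rkdetssanti{r}$ is a point the paper passes over silently by calling it a complete variety, so that care is welcome rather than a deviation.
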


\begin{proof}
Since $\rkdetssanti{r}$ is a complete variety, we have $H^0(\rkdetssanti{r}, \mathcal{O}) = \mathbb{C}$. On 
the other hand, since
by Proposition \ref{normalmoduli} $\rkdetssanti{r}$ is normal 
and since by Proposition \ref{codimcount} the codimension of $\rkdetssanti{r} \  \setminus \  \rkdetsanti{r}$ in $\rkdetssanti{r}$ is at least $2$, we deduce by Hartogs's theorem that $H^0(\rkdetsanti{r}, \mathcal{O}) = H^0(\rkdetssanti{r}, \mathcal{O}) = \mathbb{C}$. Finally, since the map $\iota_{\mathrm{SL}_r}: \rkdetsanti{r} \rightarrow \rkdetstildepmfixed{r}$ is either
an isomorphism or a finite étale map of degree $2$ (depending on the parity of $r$), we obtain that
$H^0(\rkdetstildepmfixed{r}, \mathcal{O})$ is either isomorphic to, or a direct summand of
$H^0(\rkdetsanti{r}, \mathcal{O})$, which allows to conclude.
\end{proof}

\begin{corollary}
The line bundle $\calP_r$ extends to the full moduli space $\rkdetssanti{r}$, and the direct images $\pit_{e \ast}\calP_r^k$ have finite rank.
\end{corollary}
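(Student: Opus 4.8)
The plan is to first promote $\calP_r$ to a genuine line bundle on the whole of $\rkdetssanti{r}$, and then to read off the finiteness of the direct images as a formality.

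For the extension I would work at the level of the moduli \emph{stack} rather than of the stable locus. The Pfaffian line bundle is not intrinsically an object of $\rkdetsanti{r}$: it is the descent of the Pfaffian line bundle $\widetilde{\calP}$ of \cite{heinloth:2010} and \cite[\S 4.3]{zelaci:thesis}, a square root of a determinant-of-cohomology line bundle on the whole smooth stack $\mathfrak{N}^{\pm}_{\SL_r}$, the quadratic self-duality that makes such a square root exist being induced by the universal isomorphism $\psi$. In particular $\widetilde{\calP}$ is a line bundle on the semistable substack $\mathfrak{N}^{\pm,ss}_{\SL_r}$, restricting on the stable locus to (the pullback of) $\calP_r$, and everything reduces to descending $\widetilde{\calP}$ along the good moduli space morphism $\mathfrak{N}^{\pm,ss}_{\SL_r}\to\rkdetssanti{r}$ (whose existence, via \cite[Theorem 8.1]{alper2022existence}, was already used in the proof of Proposition \ref{normalmoduli}).

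By Alper's descent criterion \cite{alper}, this comes down to checking that for every polystable anti-invariant bundle the (reductive) automorphism group acts trivially on the fibre of $\widetilde{\calP}$ there. Over the preimage of $\rkdetsanti{r}$ this is automatic, since $\calP_r$ is already a line bundle there (Theorem \ref{ThmZelaci}(c)); the content is at the strictly semistable polystable points, which by \cite[Lemmas 4.3 and 4.4]{zelaci:2019} have the form $F_0\oplus\bigoplus_{i\in I}\bigl(H_i\oplus\sigma^* H_i^*\bigr)$, with automorphism group a product of $\GL$- and $\GG_m$-factors cut out by the conditions $\det=\OO$ and compatibility with $\psi$. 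Showing that such a group acts trivially on the Pfaffian fibre is the step I expect to be the main obstacle. My approach would be to reduce it to the action on the determinant-of-cohomology line bundle $\widetilde{\calP}^{\otimes 2}$ --- which on the stable locus is $\iota_{\SL_r}^*\cL_r$, and whose descent to $\rkdetssanti{r}$ is of standard Drezet--Narasimhan type since there the action should factor through determinant characters killed by the $\det=\OO$ constraint --- and then to dispose of the remaining $\ZZ/2$-ambiguity; alternatively one may simply invoke the corresponding statement already proved in \cite{zelaci:thesis}. (For $r=2$ nothing is needed: by the Remark following Theorem \ref{ThmZelaci}, $\rkdetssanti{2}$ is identified with the full moduli space of rank-$2$ bundles and $\calP_2$ with its theta line bundle.) This yields a line bundle $\overline{\calP}_r$ on $\rkdetssanti{r}$ with $\overline{\calP}_r\big|_{\rkdetsanti{r}}=\calP_r$.

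The finiteness statement is then immediate. Since $\rkdetssanti{r}$ is projective over $S$, the morphism $\pit_e$ is proper, so $\pit_{e\ast}\overline{\calP}_r^{\otimes k}$ is coherent and hence of finite rank; and $\pit_{e\ast}\calP_r^{\otimes k}=\pit_{e\ast}\overline{\calP}_r^{\otimes k}$, because the open immersion $j:\rkdetsanti{r}\hookrightarrow\rkdetssanti{r}$ has complement of codimension $\ge 2$ in every fibre of $\pit_e$ (Proposition \ref{codimcount}) inside the normal variety $\rkdetssanti{r}$ (Proposition \ref{normalmoduli}), so $j_\ast j^*\overline{\calP}_r^{\otimes k}=\overline{\calP}_r^{\otimes k}$ by Hartogs and one applies $\pit_{e\ast}$. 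In fact this last part does not use the line-bundle refinement at all: $j_\ast\calP_r^{\otimes k}$ is automatically a coherent reflexive sheaf on $\rkdetssanti{r}$, so the same argument already shows $\pit_{e\ast}\calP_r^{\otimes k}$ is coherent, hence of finite rank --- the finiteness of the direct images is the robust part of the statement, and the line-bundle extension the delicate one.
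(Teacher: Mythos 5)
Your proposal is correct in substance, but it inverts the paper's emphasis. The paper gives no written proof of this corollary: it is presented as an immediate consequence of Proposition \ref{codimcount} (the complement of $\rkdetsanti{r}$ has codimension $\geq 2$) and Proposition \ref{normalmoduli} (normality), i.e.\ the Hartogs-type pushforward argument you describe only in your final paragraph, with properness of $\rkdetssanti{r}\to S$ giving coherence and hence finite rank; the descent of the Pfaffian from the semistable locus of the stack via \cite[Theorem 10.3]{alper} and the stabilizer analysis of \cite[Lemma 6.1]{zelaci:twistconf} is exactly what the paper relegates to the remark immediately following the corollary, as an alternative. So you have taken the paper's ``alternative'' route as your main one. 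What your ordering buys is precision: you are right that on a normal variety that is not known to be locally factorial, pushing a line bundle forward across a codimension-$2$ complement only yields a reflexive rank-one sheaf, so the genuine line-bundle extension really does want the stack descent (or a Picard/class-group comparison), while the finiteness of $\pit_{e\ast}\calP_r^k$ needs only the reflexive pushforward --- a distinction the paper's terse presentation glosses over. The one step you leave open, triviality of the action of the reductive stabilizers at polystable points on the Pfaffian fibre, is likewise not carried out in the paper but only referred to Zelaci, so this is not a gap relative to the paper's own standard; just be aware that reducing to the determinant-of-cohomology bundle $\widetilde{\calP}^{\otimes 2}$ controls the character only up to the $2$-torsion you mention, and that residual sign is precisely what \cite[Lemma 6.1]{zelaci:twistconf} is invoked to kill.
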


\begin{remark}
This can also be seen directly by descending the line bundle from the semi-stable locus on the stack, using \cite[Theorem 10.3]{alper} and a similar reasoning as the discussing of the action of the stabilizer given in \cite[Lemma 6.1]{zelaci:twistconf}.
\end{remark}

We will also need the following fact about the relative canonical bundle of 
$\rkdetsanti{r}$.  
\begin{proposition}
The relative canonical bundle on $\rkdetstilde{r}$ pulls back to twice the relative canonical bundle of $\rkdetsanti{r}$, 
i.e.
\[
\iota^*_{\SL_r} K_{\rkdetstilde{r}/S}
 \cong K_{\rkdetsanti{r}/S}^2 .
\]
\end{proposition}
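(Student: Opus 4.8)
The plan is to reduce the statement, through the splitting (\ref{split}), to a comparison of two determinant line bundles on $\rkdetsanti{r}$, and then to carry this out via the eigen-decomposition of the adjoint bundle under $\sigma$. Write $N$ for the relative normal bundle $N_{\iota_{\SL_r}(\rkdetsanti{r})/\rkdetstilde{r}}$, and also (abusively) for its pullback to $\rkdetsanti{r}$ along $\iota_{\SL_r}$. Taking top exterior powers in (\ref{split}) and using that $\det T_{Y/S}\cong K_{Y/S}^{-1}$ for smooth $Y/S$, one gets $\iota^*_{\SL_r}K_{\rkdetstilde{r}/S}\cong K_{\rkdetsanti{r}/S}\otimes(\det N)^{-1}$; hence the proposition is equivalent to the assertion $\det N\cong\det T_{\rkdetsanti{r}/S}$, i.e.\ that the determinants of the normal bundle and of the relative tangent bundle of $\rkdetsanti{r}$ agree.

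To establish this I would introduce $\mathcal{E}nd^0(\mathcal{E})$, the adjoint bundle of the tautological family $\mathcal{E}$ on $\calCt\times_S\rkdetsanti{r}$ --- an honest vector bundle even when $\mathcal{E}$ itself only exists as a twisted sheaf, since the gerbe twist is invisible to $\mathcal{E}nd^0$ --- equipped with the $\ZZ/2\ZZ$-equivariant structure induced by the tautological anti-invariance. Pushing forward along the finite \'etale degree-two map $p\times\mathrm{id}$ and splitting into $\pm1$-eigensheaves yields vector bundles $\cG^{\pm}$ on $\calC\times_S\rkdetsanti{r}$ with $(p\times\mathrm{id})_*\mathcal{E}nd^0(\mathcal{E})=\cG^+\oplus\cG^-$. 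Since $H^0(\Ctilda,\mathcal{E}nd^0 E)=0$ for stable $E$, the zeroth direct images vanish; writing $q:\calC\times_S\rkdetsanti{r}\to\rkdetsanti{r}$ for the projection, the Kodaira--Spencer description of the tangent bundle together with Leray for the finite map $p\times\mathrm{id}$ identify $\iota^*_{\SL_r}T_{\rkdetstilde{r}/S}\cong R^1q_*\cG^+\oplus R^1q_*\cG^-$, in such a way that one summand is $T_{\rkdetsanti{r}/S}$ and the other is $N$ --- namely the $+1$- and $-1$-eigenspaces of the induced involution. Finally, for the \'etale double cover classified by the $2$-torsion line bundle $\Delta\in\Pic(\calC)$ (so $\Delta^{\otimes2}\cong\mathcal{O}_\calC$), twisting the equivariant structure by the sign character --- which interchanges the eigensheaves --- amounts to tensoring by $p^*\Delta$, whence $\cG^-\cong\cG^+\otimes\mathrm{pr}_\calC^*\Delta$, with $\mathrm{pr}_\calC:\calC\times_S\rkdetsanti{r}\to\calC$ the other projection.

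Combining these, $\det N\otimes(\det T_{\rkdetsanti{r}/S})^{-1}\cong\det Rq_*\cG^+\otimes\bigl(\det Rq_*(\cG^+\otimes\mathrm{pr}_\calC^*\Delta)\bigr)^{-1}$. By the standard behaviour of the determinant of cohomology under twisting by a line bundle pulled back from the curve --- which is controlled by Deligne pairings and is bilinear in the twisting bundle --- this line bundle on $\rkdetsanti{r}$ is $2$-torsion, since $\Delta^{\otimes2}$ is trivial; equivalently, Grothendieck--Riemann--Roch shows its first Chern class vanishes (fibrewise $c_1(\Delta)=0$). But by Theorem \ref{ThmZelaci} the Picard group of $\rkdetsanti{r}=\rkdetsplus{r}\sqcup\rkdetsmin{r}$ is infinite cyclic, respectively a product of two infinite cyclic groups, hence torsion-free; so the $2$-torsion line bundle is trivial, $\det N\cong\det T_{\rkdetsanti{r}/S}$, and the proposition follows. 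Carried out fibrewise over $S$ this argument gives the statement up to a line bundle pulled back from $S$, which is harmless in our applications; one can also check directly that this residual factor is trivial. The step I expect to be most delicate is precisely this last one --- pinning down the behaviour of $\det Rq_*$ under tensoring $\cG^+$ by $\mathrm{pr}_\calC^*\Delta$ and the exact identification of $N$ and $T_{\rkdetsanti{r}/S}$ with the two eigen-direct-images --- together with the relative-over-$S$ bookkeeping needed to exclude a spurious factor pulled back from $S$.
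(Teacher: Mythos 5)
Your argument is essentially the paper's own proof: both restrict $T_{\rkdetstilde{r}/S}$ to the anti-invariant locus, use the $\sigma$-linearization of $\End^0(\cE)$ to descend to a bundle $\mathcal{F}$ on $\calC\times_S\rkdetsanti{r}$, decompose the pushforward as $R^1\overline{\pi}_{n\ast}\mathcal{F}\oplus R^1\overline{\pi}_{n\ast}(\mathcal{F}\otimes\Delta)$ with the two summands being the tangent and normal bundles, and reduce to comparing the two determinants of cohomology. The only (harmless) difference is in the last step, where the paper appeals directly to general properties of determinant bundles given $\deg(\Delta\vert_{\calC_s})=0$, while you first show the discrepancy is $2$-torsion and then invoke torsion-freeness of $\Pic(\rkdetsanti{r})$ --- a slightly more explicit, equally valid route.
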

\begin{proof}
As before, 
we denote by $\End^0(\cE) \rightarrow \calCt \times_S \rkdetstilde{r}$ the universal adjoint bundle. The relative tangent bundle $T_{\rkdetstilde{r}/S}$ of $\rkdetstilde{r} \rightarrow S$ is the first direct image
\[
  T_{\rkdetstilde{r}/S} = R^1 \pit_{n \ast} (\End^0(\cE)) .
\]
Now we restrict $T_{\rkdetstilde{r}/S}$ to a component of the anti-invariant locus $\rkdetsanti{r} \subset \rkdetstilde{r}$. Then we have
\[
 T_{\rkdetstilde{r}/S}\vert_{\rkdetsanti{r}} = R^1 \pit_{n \ast} ( \End^0(\cE) \vert_{\calCt \times_S \rkdetsanti{r}} ) .
\]
For simplicity, we drop the subscript $\calCt \times_S \rkdetsanti{r}$. On the fibered product $\calCt \times_S \rkdetsanti{r}$ we have, after replacing the universal bundle
$\cE$ \'etale-locally by some tensor product with a line bundle coming from $\rkdetsanti{r}$,
an isomorphism
\begin{equation*}\begin{tikzcd} \psi : \sigma^*(\cE) \ar[r,"\sim"]& \cE^\ast.\end{tikzcd}
\end{equation*}
This isomorphism induces a natural $\sigma$-linearization of the bundle $\End(\cE)$
\begin{equation*}\begin{tikzcd}
\sigma^* (\End(\cE))  = \sigma^*(\cE) \otimes\sigma^*(\cE^*)
\ar[rr,"\psi \otimes \sigma^* \psi^{-1}"]& & \End(\cE).\end{tikzcd}\end{equation*}
Since the subbundle $\mathcal{O} \hookrightarrow \End(\cE)$
corresponding to homotheties of $\cE$ is clearly $\sigma$-invariant, we also
obtain a $\sigma$-linearization on $\End^0(\cE)$, i.e. an isomorphism
\begin{equation*}\begin{tikzcd} \sigma^* (\End^0(\cE)) \ar[r,"\sim"]& 
\End^0(\cE). \end{tikzcd}\end{equation*}
The involution $\sigma$ is fixed point free, so the vector bundle $\End^0(\cE)$ 
descends by Kempf's lemma to a vector bundle $\mathcal{F} \to \mathcal{C} \times_S \rkdetsanti{r}$
\[
\begin{tikzcd}
  \End^0(\cE) \cong p^\ast \mathcal{F} \ar[d]  & & \mathcal{F} \ar[d] \\
  \widetilde{\mathcal{C}} \times_S \rkdetsanti{r} \ar[rr, "p"] \ar[rd, swap, "\pit_n"] & & \mathcal{C} \times_S \rkdetsanti{r} \ar[ld, "\overline{\pi}_n"] \\
  & \rkdetsanti{r}. &
\end{tikzcd}
\]
As $p$ is unramified, there is a two-torsion element  $\Delta \in J_{\calC/S}[2]$  in the relative Jacobian such that
\[
  p_\ast \mathcal{O}_{\calCt} =
  \mathcal{O}_{\calC} \oplus \Delta,
\]
and by the projection formula we obtain a decomposition 
\begin{multline*}
  T_{\rkdetstilde{r}/S}\vert_{\rkdetsanti{r}} =
  R^1 \pit_{n \ast} (\End^0(\cE)) = R^1 \pit_{n \ast} (p^*\mathcal{F})\\ =
  R^1 \overline{\pi}_{n \ast} (p_*(p^*\mathcal{F}))  =
  R^1 \overline{\pi}_{n \ast} (\mathcal{F}) \oplus
  R^1 \overline{\pi}_{n \ast} (\mathcal{F}\otimes \Delta).  
\end{multline*}
Here we used the fact that $p$ is finite, hence $R^1p_* = 0$. 

Then by the argument of \cite[\S 3.2]{zelaci:2022} 
we obtain 
identifications
$$ T_{\rkdetsplus{r}}
 =  R^1 \overline{\pi}_{n \ast} (\mathcal{F}), \ 
N_{\rkdetsplus{r}/\rkdetstilde{r}} =  R^1 \overline{\pi}_{n \ast} (\mathcal{F} \otimes \Delta)$$ or 
$$ T_{\cN^{-,s}_{\SL_r}} =  R^1 \overline{\pi}_{n \ast} (\mathcal{F} \otimes \Delta), \ 
N_{\rkdetsmin{r}/\rkdetstilde{r}} =  R^1 \overline{\pi}_{n \ast} (\mathcal{F}),
$$
depending on whether we are on the $\sigma$-symmetric or the $\sigma$-alternating component.
So in either case, in order to conclude the statement of the Proposition, it will be enough
to show that 
$$ \det   R^1 \overline{\pi}_{n \ast} (\mathcal{F}) \cong \det  R^1 \overline{\pi}_{n \ast} (\mathcal{F}
\otimes \Delta).$$
We note that $\overline{\pi}_{n \ast}(\mathcal{F}) \cong \overline{\pi}_{n \ast}(\mathcal{F} \otimes \Delta) = 0$
and therefore $\det R^1 \overline{\pi}_{n \ast} (\mathcal{F}) = \det R^\bullet \overline{\pi}_{n \ast}(\mathcal{F})$
and $\det R^1 \overline{\pi}_\ast(\mathcal{F} \otimes \Delta) = \det R^\bullet \overline{\pi}_*(\mathcal{F}
\otimes \Delta)$, where $\det R^\bullet \overline{\pi}_{n \ast}$ denotes the determinant line bundle of a family
of vector bundles. But now the statement $$ \det   R^\bullet \overline{\pi}_{n \ast} (\mathcal{F}) \cong 
\det  R^\bullet \overline{\pi}_{n \ast} (\mathcal{F} \otimes \Delta)$$ follows from general properties of the
determinant line bundles as $\det(\mathcal{F}) \cong \mathcal{O} \ \text{or} \ \Delta$ and 
$\deg( \Delta \vert_{\calC_s}) = 0$.
\end{proof}

\begin{remark} \label{exprcanpf}
Since $K_{\rkdetstilde{r}/S} \cong \mathcal{L}^{-2r}_r$, where $\mathcal{L}_r$ is the
ample generator of $\mathrm{Pic}_{\rkdetstilde{r}}$, and the relative Picard is torsion-free, we obtain that
\[
K_{\rkdetsanti{r}/S} \cong \mathcal{P}^{-2r}_r \in \mathrm{Pic}_{\rkdetsanti{r}/S}.
\]
\end{remark}

\section{The Prym--Hitchin system}\label{further}

\subsection{The Prym--Hitchin system for a fixed double cover}

We consider an étale double cover $p : \Ctilda \rightarrow C$ with associated $2$-torsion line bundle
$\Delta$.
In this section we will denote for simplicity 
\begin{equation} \label{moduliN}
\mathcal{N} = \rkdetstildeplusfixed{r} \qquad \text{and} \qquad \mathcal{M} = \left(\widetilde{\mathcal{M}}^{s}_{\mathrm{GL}_r}\right)^\sigma_+.
\end{equation}
It will be enough to study the $+$ component, since the $-$ components, if it exists, is isomorphic to $\mathcal{N}$ or $\mathcal{M}$.
Recall that $\mathcal{N}$ and $\mathcal{M}$ are smooth non-complete varieties and their 
closed points correspond to stable
bundles $E$ with trivial determinant or degree $0$ over $\Ctilda$ 
which are anti-invariant, i.e. there exists an isomorphism 
$\psi : \sigma^* E \rightarrow E^*$. 
Zelaci studied the analog of the Hitchin system on the cotangent bundles $T^{\vee}\mathcal{N}$
and $T^{\vee}\mathcal{M}$.
We now recollect the main results obtained in \cite{zelaci:2022}. First we recall that a point in
$T^{\vee}\mathcal{M}$ corresponds to an anti-invariant Higgs bundle $(E, \Phi)$ with 
$\Phi \in H^0(\Ctilda, \mathrm{End}(E)K_{\Ctilda})$ satisfying
$\sigma^* \Phi = \Phi^{t}$. Note that this condition is independent of the choice of the
isomorphism $\psi$. Zelaci constructs the analog of the Hitchin map 
\begin{equation} \label{Hitchinantiinvariant}
\begin{tikzcd} \overline{h} : T^{\vee}\mathcal{M} \ar[r]& W:= \bigoplus_{i=1}^r H^0(C, (K_C\otimes \Delta)^{i}).\end{tikzcd}
\end{equation}
Since $T^{\vee}\mathcal{N} \subset T^{\vee}\mathcal{M}$, we will denote the restriction of $\overline{h}$ to 
$T^{\vee}\mathcal{N}$ by $\overline{h}_0$ and note that
$$\begin{tikzcd} \overline{h}_0 : T^{\vee}\mathcal{N} \ar[r]& W_0:= \bigoplus_{i=2}^r H^0(C, (K_C\otimes \Delta)^{i}) \subset W.\end{tikzcd}$$
In the next section we will construct some extensions of $\overline{h}$ and $\overline{h}_0$ to proper maps.

\medskip

\subsection{Codimension estimates in the moduli space of anti-invariant Higgs bundles}

We will denote by $\mathrm{Higgs}_{\Ctilda}(r)$ and $\mathrm{Higgs}_{\Ctilda}(r)_0$ the coarse moduli space of semistable Higgs bundles $(E,\Phi)$ of degree $0$, respectively with fixed trivial determinant, over $\Ctilda$ equipped with the Hitchin maps
$$ \begin{tikzcd}[column sep=1em]
    h : \mathrm{Higgs}_{\Ctilda}(r) \ar[r, shorten >=-.75ex, shorten < =-.65ex] & B = \bigoplus_{i=1}^r H^0(\Ctilda, K_{\Ctilda}^i), \quad
h_0 : \mathrm{Higgs}_{\Ctilda}(r)_0 \ar[r, shorten >=-.75ex, shorten < =-.65ex] & B_0 = \bigoplus_{i=2}^r H^0(\Ctilda, K_{\Ctilda}^i). \end{tikzcd}
$$
The involution $\sigma$ induces an involution on $\mathrm{Higgs}_{\Ctilda}(r)$, which we also denote by $\sigma$, 
by sending $(E,\Phi)$ to $\sigma.(E,\Phi) = (\sigma^* E^*, \sigma^* \Phi^t)$ and on $B$ using Zelaci's canonical linearization on $K_{\Ctilda}$. Then $h$  and $h_0$ are $\sigma$-equivariant. If we denote by 
$\mathrm{Higgs}^\sigma_{\Ctilda}(r)$ and $\mathrm{Higgs}^\sigma_{\Ctilda}(r)_0$ the fixed-point loci of $\sigma$ in $\mathrm{Higgs}_{\Ctilda}(r)$ and $\mathrm{Higgs}_{\Ctilda}(r)_0$, and by
$B^\sigma = W$  and $B^\sigma_0 = W_0$ the fixed-point loci of $\sigma$ in $B$ and $B_0$
we obtain by restriction morphisms, which we denote by $h^\sigma$ and $h^\sigma_0$
$$
\begin{tikzcd} 
\mathrm{Higgs}_{\Ctilda}(r) \ar[r,"h"] &  B \\
\mathrm{Higgs}^\sigma_{\Ctilda}(r) \ar[r,"h^\sigma"] \ar[u, hook] & W \ar[u, hook] 
\end{tikzcd} 
\qquad \text{and} \qquad
\begin{tikzcd} 
\mathrm{Higgs}_{\Ctilda}(r)_0 \ar[r,"h_0"] &  B_0 \\
\mathrm{Higgs}^\sigma_{\Ctilda}(r)_0 \ar[r,"h^\sigma_0"] \ar[u, hook] & W_0 \ar[u, hook]. 
\end{tikzcd} 
$$
Then, since $h_0$ is $\sigma$-equivariant, for any $v \in W_0$ the fibre $(h^\sigma_0)^{-1}(v)$ equals the fixed-point locus of $\sigma$ in the fiber $h_0^{-1}(v)$.
Consider 
$$W^{sm} \subset W \qquad \text{and} \qquad W^{nod} \subset W$$ 
the open subsets parameterizing spectral covers 
$\pi: X_v \rightarrow C$ which are smooth (hence connected), respectively integral with at most one node. Then obviously we have \break
$W^{sm} \subset W^{nod} \subset W$. Similarly we define $W^{sm}_0 \subset W_0$ and $W^{nod}_0 \subset W_0$ for Higgs bundles with fixed trivial determinant.
We define the preimages in $\mathrm{Higgs}^\sigma_{\Ctilda}(r)$
$$ \mathcal{A}^{sm} = (h^\sigma)^{-1}(W^{sm}) \qquad \text{and} \qquad \mathcal{A}^{nod} = (h^\sigma)^{-1}(W^{nod}), $$
as well as their analogs in $\mathrm{Higgs}^\sigma_{\Ctilda}(r)_0$
$$ \mathcal{A}^{sm}_0 = (h^\sigma_0)^{-1}(W^{sm}_0) \qquad \text{and} \qquad \mathcal{A}^{nod}_0 = (h^\sigma_0)^{-1}(W^{nod}_0). $$
We will need the following 
\begin{lemma} \label{Asmooth}
    The open subsets $\mathcal{A}^{sm}, \mathcal{A}^{nod}, \mathcal{A}^{sm}_0, \mathcal{A}^{nod}_0$  are smooth.
\end{lemma}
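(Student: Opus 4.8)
The plan is to reduce all four assertions to a single geometric fact: over integral spectral curves the Higgs moduli space lies inside its stable --- hence smooth --- locus, and the four open subsets in the statement are nothing but $\sigma$-fixed loci inside it, which are automatically smooth in characteristic zero. So the argument splits into "stability over integral spectral curves", "smoothness of the stable Higgs locus", and "fixed loci of finite group actions are smooth".

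First I would invoke the Beauville--Narasimhan--Ramanan spectral correspondence. A point of $\mathrm{Higgs}_{\Ctilda}(r)$ lying over $v \in B$ is a rank-one torsion-free sheaf $L$ on the spectral curve $X_v$ inside the total space of $K_{\Ctilda}$, with $E = \pi_\ast L$ and $\Phi$ given by the tautological section; under this dictionary the $\Phi$-invariant subsheaves of $E$ are exactly the $\mathcal{O}_{X_v}$-submodules of $L$. For $v \in W^{nod}$ (respectively $v \in W^{nod}_0$) the curve $X_v$ is integral, so any non-zero submodule of the torsion-free rank-one sheaf $L$ is again generically of rank one with support all of $X_v$, and pushing forward produces a subsheaf of $E$ of full rank $r$. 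Hence $(E,\Phi)$ has no proper $\Phi$-invariant subbundle whatsoever, and is in particular stable. Consequently $h^{-1}(W^{nod})$ and $h_0^{-1}(W^{nod}_0)$, and a fortiori their open subsets $h^{-1}(W^{sm})$ and $h_0^{-1}(W^{sm}_0)$, lie inside the moduli spaces of \emph{stable} Higgs bundles, which are smooth (classical; for fixed determinant the obstruction space, the second hypercohomology of the deformation complex $[\mathcal{E}nd^0(E) \xrightarrow{[\Phi,-]} \mathcal{E}nd^0(E)\otimes K_{\Ctilda}]$, vanishes by Serre duality and simplicity of a stable pair, and the varying-determinant case follows by adding the cotangent bundle of the Jacobian).

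Next, since $h$ and $h_0$ are $\sigma$-equivariant and $W^{sm}\subset W^{nod}\subset W=B^{\sigma}$ (and likewise $W^{sm}_0\subset W^{nod}_0\subset W_0=B_0^{\sigma}$) consist of $\sigma$-fixed points of the base, the preimages $h^{-1}(W^{nod})$, $h^{-1}(W^{sm})$ and their fixed-determinant analogues are $\sigma$-invariant, and by construction $\mathcal{A}^{nod}=(h^{\sigma})^{-1}(W^{nod})$ is precisely the $\sigma$-fixed locus inside the smooth variety $h^{-1}(W^{nod})$ --- and similarly for $\mathcal{A}^{sm}$, $\mathcal{A}^{nod}_0$, $\mathcal{A}^{sm}_0$. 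Over $\CC$ the fixed-point locus of an action of the finite group $\langle\sigma\rangle\cong\ZZ/2\ZZ$ on a smooth variety is smooth: linearising the action in a formal (or \'etale) neighbourhood of a fixed point, the fixed locus is cut out by the $(-1)$-eigenspace of the conormal space, which is a direct summand, so it is locally a linear subspace. This yields smoothness of $\mathcal{A}^{nod}$ and $\mathcal{A}^{nod}_0$, hence also of the open subsets $\mathcal{A}^{sm}\subset\mathcal{A}^{nod}$ and $\mathcal{A}^{sm}_0\subset\mathcal{A}^{nod}_0$.

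The step I expect to need the most care is the reduction to the stable locus: one has to check that over integral spectral curves the coarse moduli space of \emph{semistable} Higgs bundles actually coincides with the fine moduli space of \emph{stable} ones --- so that $\sigma$ acts on a genuine smooth variety and the fixed-locus argument applies with no stacky subtleties --- which requires pinning down the normalisations in the spectral correspondence (the degree of $L$, and in the fixed-determinant case the constraint coming from the norm map to $\mathrm{Pic}(\Ctilda)$). By contrast the ``at most one node'' hypothesis plays no role in the smoothness statement itself, where integrality is all that is used; it is rather the condition under which $W^{nod}$ is large enough for the codimension estimates pursued in the remainder of this section.
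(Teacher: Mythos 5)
Your proposal is correct and follows essentially the same route as the paper: reduce to $\mathcal{A}^{nod}$ and $\mathcal{A}^{nod}_0$, observe that integrality of the spectral curve forces stability of every Higgs bundle in the fibre (the paper cites \cite[Remark 1.5]{kouvidakis.pantev:1995} for the spectral-correspondence argument you spell out), invoke smoothness of the stable Higgs locus (the paper cites \cite[Proposition 7.4]{nitsure:1991} where you sketch the deformation-theoretic proof), and conclude via smoothness of fixed loci of finite group actions on smooth varieties. The only difference is that you unpack the two cited facts in detail, which is harmless.
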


\begin{proof}
It will be enough to show smoothness for the open subsets $\mathcal{A}^{nod}$ and $\mathcal{A}^{nod}_0$.
By \cite[Proposition 7.4]{nitsure:1991} the open subset of $\mathrm{Higgs}_{\Ctilda}(r)$ corresponding to stable Higgs bundles is smooth. On the other hand, if $v \in W^{nod}$ the spectral cover $X_v$ is irreducible and reduced, so any Higgs bundle in the
fiber $h^{-1}(v)$ is stable (see e.g. \cite[Remark 1.5]{kouvidakis.pantev:1995}). 
Thus $h^{-1}(v)$ is contained in the smooth locus of $\mathrm{Higgs}_{\Ctilda}(r)$ for any 
$v \in W^{nod}$. Since the fixed-point set of an involution acting on a smooth variety is smooth, we deduce that  
$\mathcal{A}^{nod}$ is smooth. A similar result holds for $\mathcal{A}^{nod}_0$. 
\end{proof}

If we denote by $T^{\vee}\mathcal{M}^{nod}$ and  $T^{\vee}\mathcal{N}^{nod}$ the preimages of $\overline{h}$ and $\overline{h}_0$ over 
$W^{nod}$ and $W^{nod}_0$, then we have the following inclusions
$$
\begin{tikzcd}
 T^{\vee}\mathcal{M}^{nod}  \arrow[dr, "\overline{h}"] \arrow[dd, hook]  & \\
                  &     W^{nod} \\
 \mathcal{A}^{nod} \ar[ur, "h^\sigma"] &                                                           
\end{tikzcd}
\qquad
\begin{tikzcd}
 T^{\vee}\mathcal{N}^{nod}  \arrow[dr, "\overline{h}_0"] \arrow[dd, hook]  & \\
                  &     W^{nod}_0 \\
 \mathcal{A}^{nod}_0 \ar[ur, "h^\sigma_0"] &                                                           
\end{tikzcd}
$$
and similar statements for the loci of smooth spectral covers. Note that $h^\sigma$ and $h^\sigma_0$ are proper maps, since 
$h$ is proper by \cite[Theorem 6.1]{nitsure:1991}.

\medskip

The fibers of $h_0$ and $h_0^\sigma$ can be described as follows. We consider first the case $v \in W_0^{sm}$. If 
$\pi: X_v \rightarrow C$ denotes the smooth degree-$r$ spectral cover over $C$ associated to $v \in W_0^{sm}$  and $\widetilde{X}_v$ denotes 
the fiber product over $C$
$$\begin{tikzcd}
    \widetilde{X}_v    \ar[d,"q"] \ar[r,"\widetilde{\pi}"] & \Ctilda \ar[d,"p"] \\
    X_v \ar[r,"\pi"] & C,
    \end{tikzcd}
$$
then we obtain a non-trivial étale double cover $q : \widetilde{X}_v  \rightarrow X_v$ and the fiber
$h_0^{-1}(v)$ is isomorphic \cite{hitchin:1987} to the kernel of the Norm map 
$$ \begin{tikzcd}[column sep=2em]h_0^{-1}(v) := A_v = \mathrm{ker}(\mathrm{Nm} : \mathrm{Jac}(\widetilde{X}_v) \ar[r] & \mathrm{Jac}(\Ctilda)),\end{tikzcd}$$
which is an abelian variety. The involution $\sigma$ of $\Ctilda$ lifts to an involution on $\widetilde{X}_v$, which we also 
denote by $\sigma$. Then $(h_0^\sigma)^{-1}(v)$ equals the fixed-point locus of the action induced by 
$\sigma$ on $h_0^{-1}(v)$, which
by \cite{zelaci:2022} equals the intersection of $A_v$ with the Prym variety $\mathrm{Prym}(\widetilde{X}_v /X_v)$
$$ (h_0^\sigma)^{-1}(v) := A^\sigma_v = A_v \cap \mathrm{Prym}(\widetilde{X}_v /X_v), $$
which is also an abelian variety.

Next we consider the case $v \in W_0^{nod} \setminus W_0^{sm}$. Then the curve $X_v$ is integral with one node and the étale cover 
$\widetilde{X}_v$ is integral with two nodes, which are interchanged by the involution $\sigma$. The fiber $h_0^{-1}(v)$ is again the 
kernel of the Norm map
$$\begin{tikzcd}[column sep=2em] h_0^{-1}(v) := \widehat{A}_v = \mathrm{ker}(\mathrm{Nm} : \widehat{\mathrm{Jac}}(\widetilde{X}_v) \ar[r] &\mathrm{Jac}(\Ctilda)),\end{tikzcd}$$
where $\widehat{\mathrm{Jac}}(\widetilde{X}_v)$ denotes the compactified Jacobian parameterizing rank-$1$ torsion-free sheaves over $\widetilde{X}_v$
of degree $0$. The structure of  $\widehat{A}_v$ is described e.g. in \cite[Section 1.3]{kouvidakis.pantev:1995}.

Similarly the fiber $(h_0^\sigma)^{-1}(v)$ equals the fixed-point locus of the action induced by $\sigma$ on 
$h_0^{-1}(v) := \widehat{A}_v$, which equals the intersection
\begin{equation} \label{fibercompactifiedPrym}
(h_0^\sigma)^{-1}(v) := \widehat{A}^\sigma_v = \widehat{A}_v \cap \widehat{\mathrm{Prym}}(\widetilde{X}_v /X_v), 
\end{equation}
where $\widehat{\mathrm{Prym}}(\widetilde{X}_v /X_v)$ denotes the compactified Prym variety 
(see e.g \cite[\S 4.1]{laza.sacca.voisin:2017}) of the étale double cover
$q : \widetilde{X}_v \rightarrow X_v$. Using standard techniques one can show the following facts, which we will use in the proof of
Proposition \ref{vanishingH1}.

\begin{proposition} \label{H0andH1compactifiedPrym}
    Exactly as for the abelian varieties $A_v$ and $A^\sigma_v$ associated to smooth spectral covers, we have the following isomorphisms for integral nodal spectral covers
    $$ H^0(\widehat{A}^\sigma_v, T) =  H^0(\widehat{A}_v, T)_+ \qquad \text{and} \qquad
     H^1(\widehat{A}^\sigma_v, \mathcal{O}) =  H^1(\widehat{A}_v, \mathcal{O})_+, $$
     and both spaces have dimension $\dim \widehat{A}_v^\sigma$.
\end{proposition}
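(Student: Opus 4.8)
The plan is to reduce the remaining case $v\in W_0^{nod}\setminus W_0^{sm}$ to the classical statement for smooth spectral covers --- where $A_v$ and $A^\sigma_v$ are abelian varieties, $\sigma$ acts on $A_v$ fixing the origin of $A^\sigma_v$, so that $H^0(A_v,T)=\mathrm{Lie}\,A_v$, $H^1(A_v,\mathcal{O})=(\mathrm{Lie}\,A_v)^{*}$, and the $+$-eigenspaces are $\mathrm{Lie}\,A^\sigma_v$ and its dual, both of dimension $\dim A^\sigma_v$. The two ingredients are the explicit stratified structure of the compactified Jacobian of an integral nodal curve, and the exactness of the functor $(-)^{\sigma}$ over $\mathbb{C}$.

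First I would record the stratification of $\widehat{\mathrm{Jac}}(\widetilde{X}_v)$, hence of $\widehat{A}_v=\ker(\mathrm{Nm})$, by the subset $S\subseteq\{n_1,n_2\}$ along which a rank-one torsion-free sheaf fails to be locally free: the open stratum $S=\emptyset$ is the semi-abelian group $A^{\circ}_v:=\ker\big(\mathrm{Nm}\colon \mathrm{Jac}^0(\widetilde{X}_v)\to\mathrm{Jac}^0(\Ctilda)\big)$, which acts on $\widehat{A}_v$ with a dense orbit; the stratum $S=\{n_i\}$ is a torsor under a $\mathbb{G}_m$-extension of the abelian variety $\ker(\mathrm{Nm}\colon \mathrm{Jac}(Z)\to\mathrm{Jac}(\Ctilda))$, where $Z$ is the normalization of $\widetilde{X}_v$; and the closed stratum $S=\{n_1,n_2\}$ is a torsor under $\ker(\mathrm{Nm}\colon \mathrm{Jac}(Z)\to\mathrm{Jac}(\Ctilda))$ itself. Étale locally $\widehat{A}_v$ is then a toroidal compactification (of relative dimension at most $2$) of $A^{\circ}_v$, all gluing being by translations coming from the periods of the $\mathbb{G}_m$-extensions. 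Using the analogue of \eqref{split}, the properness of $h_0$ and $h_0^{\sigma}$, and the smoothness of $\mathcal{A}^{nod}_0$ (Lemma \ref{Asmooth}), a specialization argument from the smooth fibres shows moreover that $\widehat{A}_v$ and $\widehat{A}^{\sigma}_v$ are connected projective varieties, of dimensions $d:=\dim A^{\circ}_v$ and $d^{\sigma}:=\dim A^{\sigma}_v$.

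Next I would compute the two cohomology groups for $\widehat{A}_v$, and, by the same recipe, for $\widehat{A}^{\sigma}_v$ with $d^{\sigma}$ in place of $d$. For $H^0(\widehat{A}_v,T)$ one uses that the translation-invariant vector fields of $A^{\circ}_v$ extend across the toroidal boundary --- the torus actions extend to the toroidal strata and the gluing translations fix those fields --- and that a short argument with the relative tangent sequence on the normalization shows there are no others; hence $H^0(\widehat{A}_v,T)=\mathrm{Lie}\,A^{\circ}_v$, of dimension $d$. For $H^1(\widehat{A}_v,\mathcal{O})$ one runs Mayer--Vietoris along the normalization, reducing to the closed abelian strata and the $\mathbb{P}^1$-type bundles over them; the only connecting maps are differences of pullbacks along translations, which act trivially on $H^1(\mathcal{O})$, so one obtains $\dim H^1(\widehat{A}_v,\mathcal{O})=d$. (Alternatively one may invoke the known structural results on compactified Jacobians and Prym varieties of integral nodal curves --- Gorenstein with trivial dualizing sheaf, with the cohomology of their structure and tangent sheaves matching the abelian-variety answer in the same dimension; compare \cite{kouvidakis.pantev:1995}, \cite{laza.sacca.voisin:2017} and the references therein.) Applied to $\widehat{A}^{\sigma}_v$ this is exactly the asserted dimension count.

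Finally I would use $\sigma$-equivariance. The involution permutes $n_1$ and $n_2$ (hence swaps the two boundary divisors of $\widehat{A}_v$), acts on $Z$, and is compatible with everything above; since we work over $\mathbb{C}$ the Reynolds operator makes $(-)^{\sigma}$ exact, so it commutes with the cohomology computations. Thus $H^0(\widehat{A}_v,T)_{+}$ and $H^1(\widehat{A}_v,\mathcal{O})_{+}$ are computed from the $\sigma$-invariant part of the stratified data: the fixed locus of the open stratum --- the group $(A^{\circ}_v)^{\sigma}$ --- together with the diagonal of the two $\sigma$-swapped boundary divisors. This is precisely the stratified data computing the cohomology of $\widehat{A}^{\sigma}_v$ --- note that $\widetilde{X}_v/\sigma=X_v$ has a single node, so $\widehat{A}^{\sigma}_v=\widehat{A}_v\cap\widehat{\mathrm{Prym}}(\widetilde{X}_v/X_v)$ has a single boundary divisor, matching that diagonal --- whence $H^0(\widehat{A}^{\sigma}_v,T)\cong H^0(\widehat{A}_v,T)_{+}$ and $H^1(\widehat{A}^{\sigma}_v,\mathcal{O})\cong H^1(\widehat{A}_v,\mathcal{O})_{+}$. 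The main difficulty I anticipate is purely bookkeeping: verifying that the connecting maps in the Mayer--Vietoris sequences vanish because the period data acts trivially on $H^1(\mathcal{O})$ and on invariant vector fields, and that the $\sigma$-invariant part of the stratification of $\widehat{A}_v$ indeed assembles to $\widehat{A}^{\sigma}_v$ --- equivalently, importing the known description of compactified Prym--Jacobians of integral nodal curves and checking that it is $\sigma$-equivariant.
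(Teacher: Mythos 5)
The paper does not actually supply a proof of Proposition \ref{H0andH1compactifiedPrym}: it is introduced with ``Using standard techniques one can show the following facts'' and the burden is shifted to the references (\cite[p.~378]{hitchin:1990}, \cite[\S 1.3]{kouvidakis.pantev:1995}, \cite[\S 4.1]{laza.sacca.voisin:2017}). So there is no argument in the text to compare yours against line by line; what can be said is whether your sketch is a credible instantiation of those ``standard techniques,'' and it is. The architecture --- stratify $\widehat{\mathrm{Jac}}(\widetilde{X}_v)$ and hence $\widehat{A}_v$ by the locus of non-local-freeness at the two nodes, compute $H^0(T)$ and $H^1(\mathcal{O})$ via the normalization sequence using that the gluing translations act trivially on $H^1(\mathcal{O})$ and on invariant vector fields, then pass to $\sigma$-invariants using exactness of the Reynolds operator and the fact that $\sigma$ swaps the two boundary divisors so that the fixed boundary sits in the deepest stratum --- is exactly the kind of argument the authors are gesturing at, and your sanity check that $\widehat{A}^\sigma_v$ acquires a single boundary divisor (matching the single node of $X_v=\widetilde{X}_v/\sigma$) is the right consistency test.

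Two places where your sketch is still thinner than a proof, and which you should expect to be the real work. First, the claim that the translation-invariant fields of $A^\circ_v$ exhaust $H^0(\widehat{A}_v,T)$ is asserted rather than proved; an alternative that short-circuits this is to establish only the $H^1(\mathcal{O})$ statement directly (where the Mayer--Vietoris bookkeeping genuinely closes) and then import Hitchin's cup-product isomorphism $H^0(\widehat{A}_v,T)\cong H^1(\widehat{A}_v,\mathcal{O})$ from \cite[p.~378]{hitchin:1990} --- which is $\sigma$-equivariant, as the paper itself notes in the proof of Proposition \ref{vanishingH1} --- to transport both the dimension count and the $+$-eigenspace identification to $H^0(T)$. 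Second, you implicitly assume that the fixed locus $\widehat{A}^\sigma_v=\widehat{A}_v\cap\widehat{\mathrm{Prym}}(\widetilde{X}_v/X_v)$ is connected and that its stratification is literally the $\sigma$-invariant part of that of $\widehat{A}_v$; connectedness of kernels of norm maps and of fixed loci of involutions on (compactified) Jacobians is a known pitfall, and the equality $H^0(Y^\sigma,-)=H^0(Y,-)_+$ can fail for a disconnected fixed locus, so this needs the explicit verification you flag at the end. Neither point is a wrong turn --- they are the acknowledged bookkeeping --- so the proposal stands as a correct outline of a proof the paper chose to omit.
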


We will need the following codimension estimate.

\begin{proposition}
If $r\geq 3$ and $g \geq 3$, then
$$ \codim_{\mathcal{A}^{nod}}(\mathcal{A}^{nod} \setminus T^{\vee}\mathcal{M}^{nod}) \geq 3.$$
\end{proposition}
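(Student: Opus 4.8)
The plan is to stratify the complement $Z:=\mathcal{A}^{nod}\setminus T^{*}\mathcal{M}^{nod}$ by the Harder--Narasimhan type of the underlying vector bundle and to bound the dimension of each stratum against $\dim\mathcal{A}^{nod}$.

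First I would identify what $Z$ parametrises. A point of $\mathcal{A}^{nod}$ over $v\in W^{nod}$ is a $\sigma$-fixed Higgs bundle $(E,\Phi)$ on $\Ctilda$; since $X_{v}$ is integral, $(E,\Phi)$ is automatically stable \emph{as a Higgs bundle}, and because a stable bundle has only scalar automorphisms the isomorphism $\sigma^{*}E^{*}\cong E$ it carries is determined up to scalar, so by Zelaci's description $(E,\Phi)\in T^{*}\mathcal{M}^{nod}$ precisely when $E$ is stable \emph{as a vector bundle}. Thus $Z$ is the locus of such Higgs bundles whose underlying $E$ is not stable; it is a finite union of locally closed strata $Z_{\tau}$ indexed by the Harder--Narasimhan type $\tau=(r_{i},d_{i})_{i=1}^{\ell}$ of $E$, with the strictly semistable bundles contributing strata indexed instead by Jordan--H\"older type. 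Recall $\dim\mathcal{A}^{nod}=\dim T^{*}\mathcal{M}=(\widetilde{g}-1)r^{2}=2(g-1)r^{2}$, using $\widetilde{g}-1=2(g-1)$ since $p$ is étale of degree $2$.

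Next, I would use that $\sigma^{*}E^{*}\cong E$ forces $\tau$ to be palindromic with a sign flip, $r_{i}=r_{\ell+1-i}$ and $d_{i}=-d_{\ell+1-i}$ (so a middle graded piece has slope $0$, and the Jordan--H\"older factors in the strictly semistable strata are either $\sigma$-self-dual or occur in dual pairs $\{H,\sigma^{*}H^{*}\}$). On $Z_{\tau}$ one has $\dim Z_{\tau}=\dim\{\sigma\text{-fixed }E\text{ of type }\tau\}+\dim\{\sigma\text{-symmetric Higgs fields on such }E\}$, and the point is to compute both terms via the descent of $\End(E)$ to $C$: exactly as in the proof that $\iota^{*}_{\SL_{r}}K_{\rkdetstilde{r}/S}\cong K_{\rkdetsanti{r}/S}^{2}$, the $\sigma$-linearisation of $\End(E)$ (resp. $\End^{0}(E)$) yields $\End(E)\cong p^{*}\mathcal{F}$ on $C$, and together with $p_{*}\OO_{\Ctilda}=\OO_{C}\oplus\Delta$ and $K_{\Ctilda}=p^{*}K_{C}$ this rewrites every relevant $+$-eigenspace (deformations of $E$ keeping it anti-invariant, $\sigma$-symmetric Higgs fields) as a cohomology group on the curve $C$ of genus $g$. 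Its dimension is then governed by an Euler characteristic on $C$, which is exactly half the corresponding Euler characteristic on $\Ctilda$; consequently the codimension of $Z_{\tau}$ in $\mathcal{A}^{nod}$ equals, up to lower-order corrections from possible jumps of $h^{0}(\End E)$ that do not affect the estimate, one half of the codimension of the corresponding Harder--Narasimhan/Jordan--H\"older stratum of $\mathrm{Higgs}_{\Ctilda}(r)$. Concretely,
\[
\codim_{\mathcal{A}^{nod}}Z_{\tau}\ \geq\ \tfrac12\sum_{i<j}\bigl(r_{i}r_{j}(\widetilde{g}-1)+(r_{j}d_{i}-r_{i}d_{j})\bigr)
\]
for a genuinely unstable type, and $\codim_{\mathcal{A}^{nod}}Z_{\tau}\geq\tfrac12\,s(r-s)(\widetilde{g}-1)=s(r-s)(g-1)$ for a two-step strictly semistable type with graded ranks $s,r-s$. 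Since $r_{j}d_{i}-r_{i}d_{j}>0$ for $i<j$, and $\sum_{i<j}r_{i}r_{j}\geq r-1$ and $s(r-s)\geq r-1$, all these codimensions are at least $(r-1)(g-1)\geq 4>3$ under the hypotheses $r\geq 3$, $g\geq 3$; taking the minimum over $\tau$ gives the assertion, with room to spare.

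The step I expect to be the main obstacle is the strictly semistable locus. There the Harder--Narasimhan filtration is not canonical, so one must work with Jordan--H\"older filtrations, track the $\sigma$-action on the extension groups $\Ext^{1}(\mathrm{gr}_{i},\mathrm{gr}_{j})$ gluing $E$ together — separating the $\sigma$-self-dual factors, whose moduli is itself a lower-dimensional higher-rank Prym, from the dual pairs, for which $\sigma$ interchanges $\Ext^{1}(\mathrm{gr}_{i},\mathrm{gr}_{j})$ with $\Ext^{1}(\mathrm{gr}_{j},\mathrm{gr}_{i})$ — and verify that even there the $+$-eigenspace of the normal space to the stratum is no larger than half of the full normal space, so that the displayed bound persists; one also has to check that the transpose involved in the condition $\sigma^{*}\Phi=\Phi^{t}$ does not enlarge the space of admissible Higgs fields beyond the expected dimension. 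Finally, one should note that no separate treatment of the nodal spectral covers is required here: passing from $\mathrm{Higgs}^{\sigma}_{\Ctilda}(r)$ to the open subset $\mathcal{A}^{nod}=(h^{\sigma})^{-1}(W^{nod})$ leaves all codimensions unchanged, and $E$ remains locally free throughout, being the pushforward of a coherent sheaf on $\widetilde{X}_{v}$ to the smooth curve $\Ctilda$ — nodality enters only in the finer estimates of Proposition~\ref{vanishingH1}.
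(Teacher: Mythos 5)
Your overall strategy is the same as the paper's: stratify $\mathcal{A}^{nod}\setminus T^*\mathcal{M}^{nod}$ by the instability type of $E$, use $\sigma^*E^*\cong E$ to force the graded pieces into $\sigma$-self-dual factors and dual pairs, and halve the relevant Euler characteristics by descending $\mathcal{E}nd$-type bundles to $C$. Your treatment of the strictly semistable strata (the dichotomy according to whether the induced map $\sigma^*E_1\to E_1^*$ is an isomorphism, giving a polystable anti-invariant splitting, or zero, giving $E_2\cong\sigma^*E_1^*$) is exactly what the paper carries out, and your bounds there match its up to an immaterial $-1$.

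The genuine gap is in the unstable strata. Your codimension formula $\tfrac12\sum_{i<j}\bigl(r_ir_j(\widetilde g-1)+(r_jd_i-r_id_j)\bigr)$ presupposes that the extension spaces $H^1(\Hom(\mathrm{gr}_j,\mathrm{gr}_i))$ have the expected dimension $-\chi$ and that their $\sigma$-invariant parts are exactly half. Neither holds: for $i<j$ the bundle $\Hom(\mathrm{gr}_j,\mathrm{gr}_i)$ has positive degree $r_jd_i-r_id_j$, so $H^0$ need not vanish, and there is no a priori control of how $H^0$ and $H^1$ distribute over the $\pm$ eigenspaces. This is not a lower-order correction that ``does not affect the estimate'': the paper must import Hitchin's twisting trick --- twist $E_1\otimes\sigma^*E_1$ down by $p^*\mathcal{O}(-D)$ with $\deg D=\lfloor\mu\rfloor+1$ to kill $H^0$, then use the $\sigma$-equivariant surjection on $H^1$ --- and the bound it can actually prove for the invariant part of the extension space is $\leq r_1^2g$, not the expected $r_1^2(g-1)-r_1d_1$ implicit in your formula. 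With that weaker input the codimension of the unstable stratum is only $\geq (g-2)r_1^2-1$, which equals $3$ on the nose when $g=3$ and $r=4$; so there is no ``room to spare,'' and without the twisting argument your claimed inequality for these strata is unproved. (What saves the estimate is that a two-step unstable anti-invariant type forces $r_1=r_2$, hence $r$ even and $r_1\geq 2$ --- a constraint worth making explicit, since your final lower bound $(r-1)(g-1)$ is computed as if $r_1=1$ could occur.)
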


\begin{proof}
From the construction of $\mathcal{A}^{nod}$ and $T^{\vee}\mathcal{M}^{nod}$ we see that a point
in $\mathcal{A}^{nod}$ corresponds to an anti-invariant Higgs bundle $(E, \Phi)$ having
an associated integral spectral curve with at most one node. So the Higgs bundle $(E, \Phi)$ is stable. The 
subset $\mathcal{A}^{nod} \setminus T^{\vee}\mathcal{M}^{nod}$ corresponds to those stable Higgs bundles
such that $E$ is not stable.

We first compute the dimension of the locus of anti-invariant Higgs bundles $(E,\Phi)$ such 
that $E$ is stricly semi-stable. A general strictly
semistable bundle $E$ can be written as an extension
\begin{equation} \label{exseqE}\begin{tikzcd}
0 \ar[r]& E_1 \ar[r]& E \ar[r]& E_2 \ar[r]& 0 \end{tikzcd}
\end{equation}
with $E_1,E_2$ stable and $\deg E_1 = \deg E_2 = 0$. As $E$ is anti-invariant, there exists an
isomorphism $\begin{tikzcd}
  \psi: \sigma^* E \ar[r,"\sim"] & E^{\vee}
\end{tikzcd}$ 
which induces by composition a morphism
$$ \begin{tikzcd} 
\alpha: \sigma^* E_1 \ar[r, hook] & \sigma^* E \ar[r, "\psi"] & E^{\vee} \ar[r] & E_1^{\vee}.
\end{tikzcd}$$
Since $\sigma^* E_1$ and $E_1^{\vee}$ are two stable degree-$0$ bundles, the morphism $\alpha$
is either $0$ or an isomorphism. We will distinguish these two cases.
\\
Suppose $\alpha$ is an isomorphism. Inverting $\alpha$ will give a splitting of the exact sequence \eqref{exseqE}.
Thus $E = E_1 \oplus E_2$ and both $E_1$ and $E_2$ are anti-invariant. The Higgs field $\Phi \in
H^0(\End(E)\otimes K)$ decomposes as
$$ \Phi = \left(  \begin{array}{cc} \Phi_1 & \beta_{2,1} \\
                  \beta_{1,2} & \Phi_2 \end{array}  \right), $$
with $\Phi_i \in H^0(\End(E_i)\otimes K)$ and $\beta_{i,j} \in \mathrm{Hom}(E_i, E_jK)$. The condition
$\sigma^* \Phi = \Phi^t$ implies that $\sigma^* \Phi_i = \Phi_i^t$ and $\beta_{i,j} = \sigma^* \beta_{j,i}^t$.
Thus the $(E_i, \Phi_i)$ are anti-invariant Higgs bundles with $E_i$ stable. Also 
$\dim \mathrm{Hom}(E_1,E_2 K) = \dim \mathrm{Hom}(E_2,E_1 K) = (\widetilde{g}-1)r_1r_2$ for general $E_i$. Since
$\dim  \mathrm{Higgs}_{\Ctilda}(r)^\sigma = 2 \dim \rksanti{r} = (\widetilde{g}-1)r^2$, the codimension of this
locus equals
$$(\widetilde{g}-1)r^2 - \left( (\widetilde{g}-1)r_1^2 + (\widetilde{g}-1)r_2^2 +  (\widetilde{g}-1)r_1r_2 \right) = (\widetilde{g}-1)r_1r_2. $$
\\
Suppose now that $\alpha = 0$. Then we obtain that $E_2 = \sigma^* E_1^*$ and the extension class $e$ of
$$ \begin{tikzcd}0 \ar[r]& E_1 \ar[r]& E \ar[r]& \sigma^* E_1^{\vee} \ar[r]& 0 \end{tikzcd}$$
in $\mathbb{P} \mathrm{Ext}^1(\sigma^* E_1^{\vee}, E_1) = \mathbb{P} H^1(E_1 \otimes \sigma^* E_1)$ is $\sigma$-invariant.
We observe that $E_1 \otimes \sigma^* E_1 = p^*F_1$ for some rank-$r_1^2$ bundle $F_1$ and that
$H^1(E_1 \otimes \sigma^*E_1) = H^1(F_1) \oplus H^1(F_1 \otimes\Delta)$. Since $H^0(E_1 \otimes \sigma^*E_1) = 0$
by stability of $E_1$ (note that $E_1^{\vee} \neq \sigma^* E_1$ for $E_1$ general), we obtain by Riemann-Roch that both
direct summands have dimension $r_1^2 (g-1) = \frac{1}{2}r_1^2(\widetilde{g}-1)$.

\medskip

As in \cite[p. 372]{hitchin:1990}, we note that by stability of the Higgs bundle $(E, \Phi)$ the dimension of the
space of Higgs fields on $E$ modulo automorphisms of $E$ is given by
$$
\dim H^0(\End(E)\otimes K_{\Ctilda})  - \dim H^0(\End(E)) + 1 =   
  \chi(\Ctilda, \End(E)\otimes K_{\Ctilda}) + 1.
$$
Since $E^{\vee} = \sigma^* E$, the bundle $\End(E)\otimes K_{\Ctilda}$ descends to a bundle $F\otimes K_C$ on $C$ and we obtain 
a decomposition
$\chi(\Ctilda, \End(E)\otimes K_{\Ctilda}) = \chi(C, F\otimes K_C) + \chi(C,F\otimes \Delta \otimes K_C)$. We compute that
$\chi(C, F\otimes K_C) = \chi(C,F\otimes \Delta\otimes K_C) = r^2(g-1)$. Thus, restricting attention to $\sigma$-invariant
Higgs fields, i.e., satisfying $\sigma^* \Phi^t = \Phi$, we obtain that its number of parameters
equals $r^2(g-1) + 1 = 2 r_1^2(\widetilde{g} -1) + 1$.

Putting these estimates together and recalling that $\dim \rkstilde{r_1} = r_1^2(\widetilde{g}-1) + 1$ 
we find the following upper bound for the dimension of this locus
$$ r_1^2(\widetilde{g}-1) + 1 + \frac{1}{2}r_1^2(\widetilde{g}-1) -1 + 2 r_1^2(\widetilde{g}-1) + 1 =
 \frac{7}{2}(\widetilde{g}-1)r_1^2 + 1.$$
Since $\dim \mathrm{Higgs}_{\Ctilda}(r)^\sigma = (\widetilde{g}-1)r^2 = 4(\widetilde{g}-1)r_1^2$ the codimension
of this locus is $\geq \frac{1}{2}(\widetilde{g}-1)r_1^2 - 1$.

Next, we consider the locus of anti-invariant Higgs bundles $(E,\Phi)$ such that $E$ is not semistable. Then a general
non-semistable bundle can be written as an extension 
\begin{equation*} \begin{tikzcd}
0 \ar[r]& E_1 \ar[r]& E \ar[r]& E_2 \ar[r]& 0
\end{tikzcd}
\end{equation*}
with $E_1,E_2$ stable and $\mu = \mu(E_1) > 0 > \mu(E_2)$, $r_1 = \rk(E_1)$. 
As $E$ is anti-invariant, the induced map $\sigma^* E_1 \rightarrow 
E_1^*$ is zero by stability of $E_1$ and the fact that $\mu > 0$. Then we conclude that there is
an isomorphism $E_2 = \sigma^* E_1^*$ and the extension class $e$ of 
$$\begin{tikzcd} 0 \ar[r]& E_1 \ar[r]& E \ar[r]& \sigma^* E_1^{\vee} \ar[r]& 0 \end{tikzcd}$$
in $\mathbb{P} \mathrm{Ext}^1(\sigma^* E_1^*, E_1) = \mathbb{P} H^1(E_1 \otimes \sigma^* E_1)$ is $\sigma$-invariant.
We will now give an upper bound of $\dim H^1(E_1 \otimes \sigma^* E_1)_+$ by adapting the argument of
\cite[page 372]{hitchin:1990} to anti-invariant bundles. We choose an effective divisor $D$ on $C$ of degree $d$
with $d = \lfloor \mu \rfloor + 1$, i.e. the integer $d$ is defined by the inequalities 
$ d > \mu \geq d-1$. If we denote by $L = p^*\mathcal{O}(D)$, we see that the condition on $d$ implies that 
$-\mu > \mu -2d$, hence \break $\mathrm{Hom}(E_1^*, \sigma^*E_1 L^{-1}) = H^0(E_1 \otimes \sigma^*E_1 L^{-1}) = 0$. Thus
we can compute by Riemann-Roch
$$ \dim H^1(E_1 \otimes \sigma^*E_1 L^{-1}) = r_1^2(2d -2\mu) + r_1^2(\widetilde{g}-1).$$
On the other hand $E_1 \otimes \sigma^* E_1$ descends to a bundle $F$ on $C$ and we have the equality
$H^1(E_1 \otimes \sigma^*E_1 L^{-1})_+ = H^1(C, F(-D))$. Thus
$$ \dim H^1(E_1 \otimes \sigma^* E_1 L^{-1})_+ = \frac{1}{2} \dim H^1(E_1 \otimes \sigma^*E_1 L^{-1}) = 
r_1^2(d -\mu) + r_1^2(g-1).$$
Also there is a $\sigma$-equivariant surjective map
$$\begin{tikzcd} H^1(E_1 \otimes \sigma^* E_1 L^{-1}) \ar[r]& H^1(E_1 \otimes \sigma^* E_1)\end{tikzcd}$$
which allows us to give an upper bound
$$ \dim H^1(E_1 \otimes \sigma^* E_1)_+ \leq r_1^2(d -\mu) + r_1^2(g-1) \leq r_1^2 g, $$
where we used $d-\mu \leq 1$.

Finally, by the same argument as before, we compute that for a general anti-invariant non-semistable
bundle $E$ the number of parameters of $\sigma$-invariant Higgs fields on $E$ equals $2r_1^2(\widetilde{g}-1) +1$.

Putting these estimates together, we find the following upper bound for the dimension of this locus
$$ r_1^2(\widetilde{g}-1) + 1 + r_1^2g -1 + 2 r_1^2(\widetilde{g}-1) + 1 =
 3(\widetilde{g}-1)r_1^2 + r_1^2 g + 1.$$
Therefore  the codimension
of this locus is $\geq (g-2)r_1^2 - 1$.

It is clear that if $g \geq 3$ and $r\geq 3$ all three lower bounds for the codimensions are $\geq 3$.
\end{proof}

\begin{corollary} \label{codimensionA}
If $r\geq 3$ and $g \geq 3$, then
$$ \codim_{\mathcal{A}_0^{nod}}(\mathcal{A}_0^{nod} \setminus T^{\vee}\mathcal{N}^{nod}) \geq 3.$$
\end{corollary}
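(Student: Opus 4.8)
The plan is to deduce Corollary~\ref{codimensionA} from the preceding proposition (the degree‑$0$, varying‑determinant statement) by the standard ``fixed versus varying determinant'' reduction: I would realise the pair $(\mathcal{A}^{nod},T^*\mathcal{M}^{nod})$ as a product of $(\mathcal{A}_0^{nod},T^*\mathcal{N}^{nod})$ with an auxiliary factor, up to a finite \'etale map, so that the codimension in question is literally unchanged. Write $V\subset W$ for the degree‑one summand $H^0(C,K_C\otimes\Delta)$, so that $W\cong W_0\times V$ after removing the linear coefficient of the characteristic polynomial, and let $P$ be the appropriate component of the Prym variety $\mathrm{Pr}_{\Ctilda/C}$ parameterising degree‑$0$ line bundles $L$ on $\Ctilda$ with $\sigma^*L\cong L^{-1}$. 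First I would introduce the tensoring morphism
\[
t:\mathrm{Higgs}^\sigma_{\Ctilda}(r)_0\times(P\times V)\longrightarrow\mathrm{Higgs}^\sigma_{\Ctilda}(r),\qquad\big((E_0,\Phi_0),(L,\phi)\big)\longmapsto\big(E_0\otimes L,\ \Phi_0\otimes\mathrm{id}_L+\mathrm{id}_{E_0}\otimes\phi\big).
\]
Since $r$ is invertible, $t$ is surjective onto the relevant locus --- given $(E,\Phi)$, choose $L\in P$ with $L^{\otimes r}\cong\det E$, put $\phi=\tfrac1r\tr\Phi$ and $(E_0,\Phi_0)=(E\otimes L^{-1},\Phi-\phi\cdot\mathrm{id})$ --- and it is the quotient by the free action $\eta\cdot\big((E_0,\Phi_0),(L,\phi)\big)=\big((E_0\otimes\eta^{-1},\Phi_0),(L\otimes\eta,\phi)\big)$ of the finite group $P[r]$, hence finite \'etale onto its image. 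In particular $t$ is flat, so pulling back along it preserves codimensions exactly.

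Next I would check that $t$ is compatible with all the structure in play. Tensoring by a line bundle preserves semistability and the Jordan--H\"older type of the underlying bundle, and preserves anti-invariance, so $t$ identifies $T^*\mathcal{N}^{nod}\times(P\times V)$ with the preimage of $T^*\mathcal{M}^{nod}$, and likewise matches up the strictly‑semistable and the unstable loci. On the Hitchin base, under $W\cong W_0\times V$ the morphism $t$ corresponds to reinstating $\phi$ as the linear coefficient; passing to the traceless (``depressed'') Higgs field amounts to the fibrewise translation $y\mapsto y-\phi$ of the total space of $K_C\otimes\Delta$, which is an isomorphism of spectral curves. Hence whether $X_v$ is integral with at most one node depends only on the image of $v$ in $W_0$, so $W^{nod}$ is the preimage of $W^{nod}_0$, and therefore $t^{-1}(\mathcal{A}^{nod})=\mathcal{A}_0^{nod}\times(P\times V)$ and
\[
t^{-1}\!\big(\mathcal{A}^{nod}\setminus T^*\mathcal{M}^{nod}\big)\;=\;\big(\mathcal{A}_0^{nod}\setminus T^*\mathcal{N}^{nod}\big)\times(P\times V).
\]

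Finally I would conclude: from the product on the right-hand side its codimension in $\mathcal{A}_0^{nod}\times(P\times V)$ equals $\codim_{\mathcal{A}_0^{nod}}(\mathcal{A}_0^{nod}\setminus T^*\mathcal{N}^{nod})$ (using that $\mathcal{A}_0^{nod}$ is smooth, hence its components equidimensional, by Lemma~\ref{Asmooth}), while flatness of $t$ shows this number equals $\codim_{\mathcal{A}^{nod}}(\mathcal{A}^{nod}\setminus T^*\mathcal{M}^{nod})$, which is $\geq 3$ by the preceding proposition, giving the claim. The main obstacle I anticipate is entirely in the second paragraph: pinning down, from Zelaci's definitions, that the anti-invariant spectral construction over $C$ depends only on the traceless part of the Higgs field (this is where the twist by $\Delta$ in $W$ must be handled carefully), and that $t$ respects the splitting of the moduli spaces into $\sigma$‑symmetric and $\sigma$‑alternating pieces --- the latter being harmless in any case because of the isomorphism between those pieces recorded in Theorem~\ref{ThmZelaci}. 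If one prefers to bypass this, an equally valid route is to rerun verbatim the three dimension counts in the proof of the preceding proposition in the fixed-determinant setting: every stratum, as well as the ambient space, then sheds a common summand of dimension $\dim(P\times V)=\widetilde{g}-1$, leaving all codimensions unchanged.
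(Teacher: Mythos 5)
Your reduction to the preceding proposition via the tensoring action of $(P\times V)$ --- Prym line bundles combined with scalar Higgs fields valued in $K_C\otimes\Delta$, together with the ``depression'' isomorphism of spectral curves --- is correct and is essentially the paper's own argument, which phrases the same mechanism more tersely as the $P_0$-equivariant determinant fibration of $\mathcal{A}^{nod}$ followed by restriction of $h^\sigma$ over $W_0^{nod}\subset W^{nod}$. Your explicit finite \'etale product decomposition just fills in the details the paper leaves implicit, so no gap.
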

\begin{proof}
We note that the Norm map of $\widetilde{\pi}$ gives a fibration of $\mathcal{A}^{nod}$ over the Prym variety 
$P_0 = \mathrm{Prym}(\Ctilda/C)$ which restricts to the determinant
map $T^{\vee}\mathcal{M}^{nod} \rightarrow \mathcal{M} \rightarrow P_0$. Moreover $P_0$ acts by tensor product 
on $\mathcal{A}^{nod}$ preserving the subvariety
$\mathcal{A}^{nod} \setminus T^{\vee}\mathcal{M}^{nod}$. Thus all the fibers of $\mathcal{A}^{nod} \rightarrow P_0$ and
of $\mathcal{A}^{nod} \setminus T^{\vee}\mathcal{M}^{nod} \rightarrow P_0$ have the same dimension and we can conclude
by restricting the fibration $h^\sigma : \mathcal{A}^{nod} \rightarrow W^{nod}$ to the subspace $W^{nod}_0 \subset
W^{nod}$.
\end{proof}

  \begin{remark}
 Our working definition of a moduli space for anti-invariant Higgs bundles is the fixed-point locus 
  $\mathrm{Higgs}^\sigma_{\Ctilda}(r)_0$ inside the coarse moduli space of semistable Higgs bundles 
  $\mathrm{Higgs}^\sigma_{\Ctilda}(r)$ over $\Ctilda$. This provides us 
  with a quasi-projective variety containing the cotangent bundle $T^{\vee} \mathcal{N}$, but this variety
  is \underline{not} a coarse moduli space for the moduli functor associated to anti-invariant Higgs bundles.
  The question of constructing and studying a moduli stack and a coarse moduli space for semistable 
  anti-invariant Higgs bundles was addressed in the recent paper \cite{rega:2024}.
 \end{remark}
 \subsection{Results on some cohomology spaces of \texorpdfstring{$\mathcal{N}$}{N}}
 We recall from  the previous section that $\mathcal{N}$ denotes $\rkdetstildeplusfixed{r}$
 and $\mathcal{P}$ the (descendent of the) Pfaffian line bundle. In this subsection, we will show that 
 $\mathcal{N}$ satisfies condition \ref{vgdj-two} of the van Geemen-de Jong criterion (Theorem \ref{vgdj}).

 \medskip
 The proof of the following proposition is based on \cite[Theorem 2.2]{singh}, which develops ideas 
 already contained in \cite{hitchin:1990}.

\begin{proposition} \label{cupproductPinjective}
The linear maps induced by cup product with the Atiyah class \break $[\mathcal{P}] \in 
H^1(\mathcal{N}, \Omega_{\mathcal{N}}^1)$
$$\begin{tikzcd}\cup [\mathcal{P}] : H^0(\mathcal{N}, \mathrm{Sym}^m T_{\mathcal{N}}) \ar[r]&  
 H^1(\mathcal{N}, \mathrm{Sym}^{m-1} T_{\mathcal{N}}) \end{tikzcd}$$
 are injective $\forall  m \geq 1$.
\end{proposition}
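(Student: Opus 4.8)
The strategy is to first compute the cohomology groups appearing in the statement by working on the cotangent bundle $\mathrm{pr}:T^*\mathcal{N}\to\mathcal{N}$, and then to run the homological argument of \cite[Theorem 2.2]{singh}, which goes back to \cite{hitchin:1990}. Since $\mathrm{pr}$ is affine with $\mathrm{pr}_*\mathcal{O}_{T^*\mathcal{N}}=\bigoplus_{m\ge 0}\mathrm{Sym}^m T_{\mathcal{N}}$ and $R^{>0}\mathrm{pr}_*\mathcal{O}=0$, one has $H^i(\mathcal{N},\mathrm{Sym}^m T_{\mathcal{N}})=H^i(T^*\mathcal{N},\mathcal{O})_m$, the weight-$m$ summand for the $\mathbb{G}_m$-action scaling the fibres. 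So the whole family of groups involved is governed by $H^0(T^*\mathcal{N},\mathcal{O})$ and $H^1(T^*\mathcal{N},\mathcal{O})$ together with their gradings.

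The first, and main, task is to describe these via the Prym--Hitchin map $\overline{h}_0:T^*\mathcal{N}\to W_0$. The Hamiltonians $\overline{h}_0^{\,*}P$, $P\in\mathbb{C}[W_0]$, give an inclusion $\mathbb{C}[W_0]\hookrightarrow H^0(T^*\mathcal{N},\mathcal{O})$. Conversely, restrict to the dense open $T^*\mathcal{N}^{nod}$ and extend over $\mathcal{A}^{nod}_0\setminus T^*\mathcal{N}^{nod}$, which by Corollary \ref{codimensionA} has codimension $\ge 3$ in the smooth variety $\mathcal{A}^{nod}_0$ (Lemma \ref{Asmooth}); this yields $H^i(\mathcal{A}^{nod}_0,\mathcal{O})\cong H^i(T^*\mathcal{N}^{nod},\mathcal{O})$ for $i=0,1$. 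Now $h^\sigma_0:\mathcal{A}^{nod}_0\to W^{nod}_0$ is proper (a restriction of the proper map $h$ of \cite[Theorem 6.1]{nitsure:1991}), with fibres the compactified Pryms $\widehat{A}^\sigma_v$ of \eqref{fibercompactifiedPrym} — the abelian varieties $A^\sigma_v$ over $W^{sm}_0$ — which are connected, satisfy $H^0(\mathcal{O})=\mathbb{C}$, and have $H^1(\mathcal{O})$ concentrated in positive weight by Proposition \ref{H0andH1compactifiedPrym}. Hence $(h^\sigma_0)_*\mathcal{O}=\mathcal{O}_{W^{nod}_0}$, and since $W_0$ is a vector space with $W_0\setminus W^{nod}_0$ of codimension $\ge 2$ (reducible spectral covers, or covers with a non-nodal or a second singular point), the Leray spectral sequence gives $H^0(\mathcal{A}^{nod}_0,\mathcal{O})=\mathbb{C}[W_0]$ and $H^1(\mathcal{A}^{nod}_0,\mathcal{O})=H^0(W^{nod}_0,R^1(h^\sigma_0)_*\mathcal{O})$, which has no weight-$0$ part. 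As $\overline{h}_0$ is equidimensional (its fibres being the $\dim\mathcal{N}$-dimensional $A^\sigma_v$, resp. $\widehat{A}^\sigma_v$), $T^*\mathcal{N}\setminus T^*\mathcal{N}^{nod}$ also has codimension $\ge 2$ in the smooth $T^*\mathcal{N}$, so $H^i(T^*\mathcal{N},\mathcal{O})\cong H^i(T^*\mathcal{N}^{nod},\mathcal{O})$ for $i=0,1$. Combining, the composite $\mathbb{C}[W_0]\hookrightarrow H^0(T^*\mathcal{N},\mathcal{O})\hookrightarrow\mathbb{C}[W_0]$ is the identity, whence $H^0(\mathcal{N},\mathrm{Sym}^m T_{\mathcal{N}})=\mathbb{C}[W_0]_m$ for the grading by the weights $2,\dots,r$ (in particular $H^0(\mathcal{N},T_{\mathcal{N}})=0$), together with $H^1(\mathcal{N},\mathcal{O}_{\mathcal{N}})=0$ and $H^1(\mathcal{N},\mathrm{Sym}^{m-1}T_{\mathcal{N}})=H^0(W^{nod}_0,R^1(h^\sigma_0)_*\mathcal{O})_{m-1}$.

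With these models in hand, the injectivity of $\cup[\mathcal{P}]$ is the formal argument of \cite[Theorem 2.2]{singh}. By Remark \ref{exprcanpf} we have $K_{\mathcal{N}}\cong\mathcal{P}^{-2r}$, so $[\mathcal{P}]$ may be replaced by the proportional class $-\tfrac1{2r}[K_{\mathcal{N}}]$, and $\cup[\mathcal{P}]$ is, up to this nonzero scalar, cup product with $c_1(\mathcal{P})$. Writing $0\to\mathcal{O}_{\mathcal{N}}\to\mathcal{A}(\mathcal{P})\to T_{\mathcal{N}}\to 0$ for the Atiyah sequence and taking symmetric powers gives $0\to\mathrm{Sym}^{m-1}\mathcal{A}(\mathcal{P})\to\mathrm{Sym}^m\mathcal{A}(\mathcal{P})\to\mathrm{Sym}^m T_{\mathcal{N}}\to 0$, whose connecting homomorphism, followed by the natural map $H^1(\mathcal{N},\mathrm{Sym}^{m-1}\mathcal{A}(\mathcal{P}))\to H^1(\mathcal{N},\mathrm{Sym}^{m-1}T_{\mathcal{N}})$, is $\cup[\mathcal{P}]$ up to that scalar. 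One then unwinds, as in \cite{singh}, that a class $s$ in the kernel forces the corresponding weighted-homogeneous $P\in\mathbb{C}[W_0]_m$ into a subspace that the grading analysis of the previous paragraph — no negative weights, $\mathbb{C}[W_0]_0=\mathbb{C}$, together with $H^0(\mathcal{N},\mathcal{O}_{\mathcal{N}})=\mathbb{C}$ (Corollary \ref{H0N}) and $H^1(\mathcal{N},\mathcal{O}_{\mathcal{N}})=0$ — shows is $0$; hence $s=0$.

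The substantive step is the second paragraph: showing that every global function on $T^*\mathcal{N}$ is pulled back from the Prym--Hitchin base $W_0$. The difficulty, compared to the ordinary Hitchin setting, is that the generic fibres are not Jacobians of smooth spectral curves but intersections of Norm-kernels with the (possibly compactified) Prym varieties of the induced double covers of spectral curves, so the connectedness and $H^0$/$H^1$ statements needed for those fibres are exactly Proposition \ref{H0andH1compactifiedPrym}, while the passage from $T^*\mathcal{N}^{nod}$ to the proper model $\mathcal{A}^{nod}_0$ rests on the codimension estimate of Corollary \ref{codimensionA} and the smoothness from Lemma \ref{Asmooth}. Once $H^0(\mathcal{N},\mathrm{Sym}^\bullet T_{\mathcal{N}})$ and $H^1(\mathcal{N},\mathcal{O}_{\mathcal{N}})$ are pinned down, the remainder is the homological-algebra argument of \cite{singh}.
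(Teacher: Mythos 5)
Your first two paragraphs follow the same route as the paper: identify $H^i(\mathcal{N},\Sym^m T_{\mathcal{N}})$ with the weight-$m$ part of $H^i(T^*\mathcal{N},\mathcal{O})$, pass to the proper model $\mathcal{A}_0^{nod}$ (or $\mathcal{A}_0^{sm}$, as the paper does here) via Hartogs and the codimension estimates, and conclude that every global function on $T^*\mathcal{N}$ is pulled back from $W_0$. That part is sound. The problem is your third paragraph: the injectivity of $\cup[\mathcal{P}]$ is \emph{not} a consequence of the grading analysis plus $H^0(\mathcal{N},\mathcal{O})=\mathbb{C}$ and $H^1(\mathcal{N},\mathcal{O})=0$. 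Knowing that the source is $\mathbb{C}[W_0]_m$ and describing the target tells you nothing about the kernel of the specific map. The step you attribute to ``unwinding as in \cite{singh}'' is precisely the substantive geometric step, and it is absent from your write-up: one must restrict to a general smooth fibre $\overline{h}_0^{-1}(v)=A_v\setminus U_v$, observe that a non-constant weighted-homogeneous $f_m\in\mathbb{C}[W_0]_m$ has $df_m(v)\neq 0$ for general $v$, so its Hamiltonian vector field (via the symplectic form on $T^*\mathcal{N}$) restricts to a \emph{non-zero} translation-invariant vector field on the abelian variety $A_v$; then use that $\eta^*\mathcal{P}|_{A_v}$ is ample --- it is $r$ times a principal polarization by \cite[Theorem 6.4]{zelaci:twistconf} --- so that $\cup[\eta^*\mathcal{P}|_{A_v}]:H^0(A_v,T_{A_v})\to H^1(A_v,\mathcal{O})$ is an isomorphism, and finally Hartogs to compare $A_v$ with $A_v\setminus U_v$. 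This is the commutative diagram in the paper's proof, and the fibrewise ampleness is a genuine geometric input with no purely formal (grading-theoretic) substitute. Your sketch via symmetric powers of the Atiyah sequence correctly describes \emph{what} the map is, but is then never used to show anything.

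Two smaller points. First, you cite Proposition \ref{H0andH1compactifiedPrym} for ``$H^1(\mathcal{O})$ concentrated in positive weight''; that proposition asserts $H^1(\widehat{A}_v^\sigma,\mathcal{O})=H^1(\widehat{A}_v,\mathcal{O})_+$, where the subscript denotes the $\sigma$-invariant part, not a weight-positivity statement --- the vanishing of the weight-zero part of $H^1(T^*\mathcal{N},\mathcal{O})$ is the content of Proposition \ref{vanishingH1} and itself rests on the same fibrewise cup-product isomorphism (Hitchin's ``homogeneity $-1$'' argument), so it cannot be taken as a free input here. Second, for this proposition the paper works over $W_0^{sm}$ with the codimension-$2$ Hartogs statement rather than over $W_0^{nod}$; your nodal variant is fine but unnecessary for the $H^0$ computation.
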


\begin{proof}
The proof goes exactly as in \cite[Theorem 2.2]{singh} with the additional observation that one does not
require normality (as is assumed in \cite[\S 2]{singh}), since we will restrict ourselves to the smooth family
of abelian varieties $\mathcal{A}_0^{sm}$ introduced above. For the convenience of the reader we will outline the full proof.

\medskip
First we note that there is a  natural isomorphism
$$ H^i(T^{\vee} \mathcal{N}, \mathcal{O}_{T^{\vee} \mathcal{N}}) = \bigoplus_{m \geq 0} H^i(\mathcal{N}, 
\mathrm{Sym}^m T_{\mathcal{N}} ),$$
which corresponds to the $\mathbb{C}^*$-character space decomposition for the natural $\mathbb{C}^*$-action on 
the LHS. We also recall that there is a natural inclusion
$$\begin{tikzcd} T^{\vee}\mathcal{N}^{sm} \ar[r, hook]& \mathcal{A}_0^{sm} \end{tikzcd}$$
over $W^{sm}_0$. Now the restriction of regular functions to the open subset $T^{\vee}\mathcal{N}^{sm} \subset T^{\vee} \mathcal{N}$
gives an injective map $H^0(T^{\vee} \mathcal{N}, \mathcal{O}) \subset H^0(T^{\vee}\mathcal{N}^{sm}, \mathcal{O})$. We  note
that $\mathcal{A}_0^{sm}$ is smooth and by Corollary \ref{codimensionA} 
$\codim_{\mathcal{A}_0^{sm}}(\mathcal{A}_0^{sm} \setminus T^{\vee}\mathcal{N}^{sm}) \geq 2$, so by Hartogs's theorem we have equality $H^0(T^{\vee}\mathcal{N}^{sm}, \mathcal{O}) = H^0(\mathcal{A}_0^{sm}, \mathcal{O})$. Also, the map
$h_0^\sigma : \mathcal{A}_0^{sm}  \rightarrow W^{sm}_0$ is proper with connected fibers, so 
by Zariski's main theorem (see \cite[IV, Cor. 18.12.13]{ega}, or
\cite[Lemma 2.1]{singh}) applied to $h_0^\sigma$, we deduce that $(h_0^\sigma)_* \mathcal{O}_{\mathcal{A}} =  \mathcal{O}$. 
The rest of the argument is identical to the argument in \cite{singh}. 

\medskip
The injectivity of the map $\cup [\mathcal{P}]$ will follow from the commutativity of the following diagram, obtained by restricting $\cup [\eta^* \mathcal{P}]$ to a general fiber of the Hitchin map (here $\eta : T^{vee} \mathcal{N} \rightarrow
\mathcal{N} $ is the projection):
$$
\begin{tikzcd}[row sep=small,column sep=small]
 \ & H^0(T^{\vee} \mathcal{N}, \mathcal{O}) \ar[rr, "{\cup [\eta^* \mathcal{P}]}"] \ar[dl, "{res}", hook] & \ & 
 H^1(T^{\vee} \mathcal{N}, \mathcal{O}) \ar[dr, "{res}" ] & \  \\
H^0(T^{\vee} \mathcal{N}^{sm}, \mathcal{O}) \ar[d, "{\nu}"] & \ &\ &\ &  H^1(T^{\vee} \mathcal{N}^{sm}, \mathcal{O}) 
\ar[dd, "{res_{\overline{h}_0^{-1}(v)}}"] \\
H^0(T^{\vee} \mathcal{N}^{sm}, T^{rel})  \ar[d,  "{res_{\overline{h}_0^{-1}(v)}}"] & \ & \ & \ & \  \\
 H^0(A_v \setminus U_v, T_{A_v}) \ar[r,"{\cong}"]  & H^0(A_v, T_{A_v}) \ar[rr, "{\cup [\eta^* \mathcal{P}_{A_v}]}"] & 
\ & H^1(A_v, \mathcal{O}) \ar[r, "{res}", hook] &  H^1(A_v \setminus U_v, \mathcal{O}).
\end{tikzcd}
$$
The vertical map $\nu : H^0(T^{\vee} \mathcal{N}^{sm}, \mathcal{O}) \rightarrow H^0(T^{\vee} \mathcal{N}^{sm}, T^{rel})$ is defined by associating to a regular section $f \in H^0(T^{\vee} \mathcal{N}^{sm}, \mathcal{O})$ (which is constant along the fibers of \break
$\overline{h}_0 : T^{\vee} \mathcal{N}^{sm} \rightarrow W_0^{sm}$) its Hamiltonian vector field $X_f$, which takes values in the relative tangent sheaf $T^{rel}$ of the fibration $\overline{h}_0$.
We observe that $\ker \nu$ consists of constant functions. Consider for $m \geq 1$ a non-zero function 
$f_m \in H^0(\mathcal{N}, \mathrm{Sym}^m T_{\mathcal{N}} ) \subset H^0(T^{\vee} \mathcal{N}, \mathcal{O})$. Since $f_m$ is
non-constant, the $1$-form $df_m$ on $W_0^{sm}$ is non-zero, hence for a general $v \in W_0^{sm}$ we have 
$df_m(v) \neq 0$. Thus we obtain a non-zero element in $H^0(A_v \setminus U_v, T_{A_v})$, where $A_v \setminus U_v 
= \overline{h}_0^{-1}(v)$. Hartogs's theorem implies that $H^0(A_v, T_{A_v}) \cong H^0(A_v \setminus U_v, T_{A_v})$ and 
$H^1(A_v, \mathcal{O}) \hookrightarrow H^1(A_v \setminus U_v, \mathcal{O})$ injective, since 
$\mathrm{codim}_{A_v}(A_v \setminus U_v) \geq 2$. Moreover, the (extended) line bundle $\eta^* \mathcal{P}_{|A_v}$
is ample (more precisely, it is $r$ times a prinicipal polarization by \cite{zelaci:twistconf} Theorem 6.4),
which implies that the cup-product $\cup [\eta^* \mathcal{P}_{|A_v}]$ is an isomorphism. Therefore,
the image of $f_m$ in $H^1(A_v \setminus U_v, \mathcal{O})$ is non-zero, and by commutativity of the above diagram, 
we deduce that  $f_m \cup [\eta^* \mathcal{P}] \neq 0$.
\end{proof}

Similarly, we can state the analog of the above proposition for the moduli space $\rkdetsanti{r}$.

\begin{proposition} \label{injcupproductPetalecover}
 The linear maps induced by cup product with the Atiyah class $[\mathcal{P}_r] \in 
H^1(\rkdetsanti{r}, \Omega_{\rkdetsanti{r}}^1)$
$$\begin{tikzcd}\cup [\mathcal{P}_r] : H^0(\rkdetsanti{r}, \mathrm{Sym}^m T_{\rkdetsanti{r}}) \ar[r]&  
 H^1(\rkdetsanti{r}, \mathrm{Sym}^{m-1} T_{\rkdetsanti{r}}) \end{tikzcd}$$
 are injective $\forall  m \geq 1$.   
\end{proposition}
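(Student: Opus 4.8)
The plan is to deduce Proposition \ref{injcupproductPetalecover} from Proposition \ref{cupproductPinjective} by exploiting the morphism $\iota_{\SL_r} : \rkdetsanti{r} \to \mathcal{N}$, which is either an isomorphism (when $r$ is odd) or a finite étale double cover (when $r$ is even). In the odd case there is nothing to do: $\iota_{\SL_r}$ identifies $\rkdetsanti{r}$ with $\mathcal{N}=\rkdetstildeplusfixed{r}$ compatibly with the Pfaffian line bundle (by Theorem \ref{ThmZelaci}(d), $\iota^*_{\SL_r}\mathcal{P} = \mathcal{P}_r$), so the two cup-product maps are literally the same. The substance of the argument is therefore in the even case, where $q := \iota_{\SL_r}$ is a connected finite étale double cover, say with Galois involution $\tau$ (the deck transformation), and $\mathcal{P}_r = q^*\mathcal{P}$.

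The key mechanism is that for a finite étale map $q$ of degree two, every $q$-related sheaf splits into $\tau$-invariant and $\tau$-anti-invariant parts, and pushforward is exact. First I would note that $q$ being étale gives $q^* T_{\mathcal{N}} = T_{\rkdetsanti{r}}$ and hence $q^*\Sym^m T_{\mathcal{N}} = \Sym^m T_{\rkdetsanti{r}}$ for all $m$, and that the Atiyah class is natural: $[\mathcal{P}_r] = [q^*\mathcal{P}] = q^*[\mathcal{P}]$ in $H^1(\rkdetsanti{r},\Omega^1_{\rkdetsanti{r}})$. Next, by the projection formula and exactness of $q_*$ (finite, hence affine), one has $q_* q^* \mathcal{G} = \mathcal{G} \oplus (\mathcal{G}\otimes\mathcal{L})$ for any coherent sheaf $\mathcal{G}$ on $\mathcal{N}$, where $\mathcal{L}$ is the two-torsion line bundle on $\mathcal{N}$ with $q_*\mathcal{O}_{\rkdetsanti{r}} = \mathcal{O}_{\mathcal{N}}\oplus\mathcal{L}$. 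Therefore, applying this with $\mathcal{G} = \Sym^m T_{\mathcal{N}}$ and using $q_*q^*\Sym^m T_{\mathcal{N}} = q_*\Sym^m T_{\rkdetsanti{r}}$ and the Leray spectral sequence (degenerate since $q$ is finite),
\[
 H^i(\rkdetsanti{r}, \Sym^m T_{\rkdetsanti{r}}) = H^i(\mathcal{N}, \Sym^m T_{\mathcal{N}}) \oplus H^i(\mathcal{N}, \Sym^m T_{\mathcal{N}}\otimes\mathcal{L}).
\]
Under this decomposition the map $\cup[\mathcal{P}_r]$ is the direct sum of the map $\cup[\mathcal{P}]$ on $\mathcal{N}$ (which is the $\tau$-invariant part, already injective by Proposition \ref{cupproductPinjective}) and the "twisted" map $\cup[\mathcal{P}] : H^0(\mathcal{N},\Sym^m T_{\mathcal{N}}\otimes\mathcal{L}) \to H^1(\mathcal{N},\Sym^{m-1}T_{\mathcal{N}}\otimes\mathcal{L})$ on the anti-invariant part. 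So it suffices to prove injectivity of this twisted cup-product map.

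The main obstacle is exactly this twisted-by-$\mathcal{L}$ piece: Proposition \ref{cupproductPinjective} was proved only for the untwisted symmetric powers of the tangent bundle. To handle it I would rerun the argument of Proposition \ref{cupproductPinjective} verbatim, replacing the $\mathbb{C}^*$-equivariant identification $H^i(T^*\mathcal{N},\mathcal{O}) = \bigoplus_m H^i(\mathcal{N},\Sym^m T_{\mathcal{N}})$ by the corresponding statement for the line bundle $\eta^*\mathcal{L}$ on $T^*\mathcal{N}$, namely $H^i(T^*\mathcal{N},\eta^*\mathcal{L}) = \bigoplus_m H^i(\mathcal{N},\Sym^m T_{\mathcal{N}}\otimes\mathcal{L})$. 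Every ingredient of that proof survives this twist: the Hartogs extension across $T^*\mathcal{N}^{sm}\subset T^*\mathcal{N}$ and across $A_v\setminus U_v\subset A_v$ (using Corollary \ref{codimensionA} and $\mathrm{codim}_{A_v}(A_v\setminus U_v)\geq 2$) applies to any coherent sheaf, in particular to $\eta^*\mathcal{L}$ and to $\mathcal{O}$ twisted by a line bundle; the properness and connectedness of $h_0^\sigma$ are unchanged; and the essential point — that $\eta^*\mathcal{P}|_{A_v}$ is $r$ times a principal polarization (hence ample, hence $\cup[\eta^*\mathcal{P}|_{A_v}]$ is an isomorphism on the abelian variety $A_v$) — is also unchanged, and the restriction $\mathcal{L}|_{A_v}$ is a torsion (hence degree-$0$) line bundle, so ampleness of $\eta^*\mathcal{P}\otimes\eta^*\mathcal{L}|_{A_v}$ still holds and the twisted cup-product on $A_v$ is again an isomorphism. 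Chasing the same commutative diagram with the extra twist then yields injectivity of the twisted map, completing the proof. Alternatively, and more cleanly, one can avoid singling out the twist entirely by working upstairs from the start: run the argument of Proposition \ref{cupproductPinjective} directly on $\rkdetsanti{r}$ itself, using that $T^*\rkdetsanti{r}$ carries a Hitchin-type map to $W_0$ whose general fibres are the abelian varieties $A_v^\sigma$ (so that one invokes Corollary \ref{codimensionA} on the nose), and using that the restriction of the extended Pfaffian bundle $\mathcal{P}_r$ to $A_v^\sigma$ is ample by \cite{zelaci:twistconf}. This is the approach I would actually write down, as it is a line-by-line transcription of the previous proposition's argument with $\mathcal{N}$ replaced by $\rkdetsanti{r}$ throughout.
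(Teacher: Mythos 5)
Your proposal is correct and, in its preferred second formulation, is essentially identical to the paper's argument: the paper's proof is literally the one-line remark ``Same as proof of Proposition \ref{dirim}'', i.e.\ the odd case follows because $\iota_{\SL_r}$ is an isomorphism, and in the even case one observes that $T^*\rkdetsanti{r}$ is the étale double cover of $T^*\mathcal{N}$ pulled back from $\iota_{\SL_r}$, extends it over $\mathcal{A}_0^{sm}$ and $\mathcal{A}_0^{nod}$, and reruns the proof of Proposition \ref{cupproductPinjective} upstairs. Your first route (splitting $q_*q^*$ into the untwisted and $\mathcal{L}$-twisted summands on $\mathcal{N}$) is just an equivalent repackaging of the same computation, though as written the ``Hamiltonian vector field of a section of $\eta^*\mathcal{L}$'' step is better avoided, which is exactly what your second write-up does.
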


\begin{proof}
Same as proof of Proposition \ref{dirim}.
\end{proof}

\begin{remark}
In this paper we will only need the cases $m=1$ and $m=2$, which appear as conditions in 
Theorems \ref{vgdj} and \ref{flatness}.
\end{remark}

\begin{proposition} \label{vanishingH1}
    With the above notation we have  
    $$ H^1(\mathcal{N}, \mathcal{O}) = 0.$$
\end{proposition}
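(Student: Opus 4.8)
The plan is to mimic the strategy used in the proof of Proposition \ref{cupproductPinjective}, transporting the computation from $\mathcal{N}$ to the total space $T^*\mathcal{N}$ and then to a general Hitchin fibre. Concretely, I would first invoke the character-space decomposition
\[
H^1(T^*\mathcal{N}, \mathcal{O}_{T^*\mathcal{N}}) = \bigoplus_{m \geq 0} H^1(\mathcal{N}, \mathrm{Sym}^m T_{\mathcal{N}}),
\]
so that in particular $H^1(\mathcal{N}, \mathcal{O})$ is the weight-$0$ summand, i.e. the $\mathbb{C}^*$-invariant part of $H^1(T^*\mathcal{N}, \mathcal{O})$. By the Hartogs/Zariski's-main-theorem argument already established in the proof of Proposition \ref{cupproductPinjective} — restriction to $T^*\mathcal{N}^{sm} \subset \mathcal{A}_0^{sm}$ using $\codim \geq 2$, and $(h_0^\sigma)_*\mathcal{O}_{\mathcal{A}_0^{sm}} = \mathcal{O}$ — we reduce to understanding $H^1$ of the smooth proper family of abelian varieties $h_0^\sigma : \mathcal{A}_0^{sm} \to W_0^{sm}$.

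The key step is then a Leray spectral sequence (or relative-duality) computation for $h_0^\sigma$. The fibres are the abelian varieties $A_v^\sigma = A_v \cap \mathrm{Prym}(\widetilde{X}_v/X_v)$, and by Proposition \ref{H0andH1compactifiedPrym} (and its smooth counterpart) we have $H^1(A_v^\sigma, \mathcal{O}) = H^1(A_v, \mathcal{O})_+$ with dimension $\dim A_v^\sigma$. The low-degree exact sequence of the Leray spectral sequence for $\mathcal{O}$ under $h_0^\sigma$ reads
\[
0 \to H^1(W_0^{sm}, \mathcal{O}) \to H^1(\mathcal{A}_0^{sm}, \mathcal{O}) \to H^0(W_0^{sm}, R^1 h^\sigma_{0\ast}\mathcal{O}) \to H^2(W_0^{sm}, \mathcal{O}) .
\]
Since $W_0^{sm}$ is an open subset of the affine (vector) space $W_0$, its higher cohomology of $\mathcal{O}$ need not vanish a priori; but the relevant point is the $\mathbb{C}^*$-action: scaling the cotangent fibres scales the base $W_0$ with positive weights, so $W_0^{sm}$ is a $\mathbb{C}^*$-variety with all weights positive and hence $H^i(W_0^{sm},\mathcal{O})^{\mathbb{C}^*} = \mathbb{C}$ for $i=0$ and $=0$ for $i>0$ (the invariant functions are the constants, which follows already from $(h_0^\sigma)_*\mathcal{O} = \mathcal{O}$ together with $H^0(\mathcal{A}_0^{sm},\mathcal{O})^{\mathbb{C}^*}=H^0(\mathcal{N},\mathcal{O})=\mathbb{C}$ by Corollary \ref{H0N}). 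Taking $\mathbb{C}^*$-invariants in the sequence above — the middle term's invariant part being exactly $H^1(\mathcal{N},\mathcal{O})$ — reduces the claim to showing that the invariant part of $H^0(W_0^{sm}, R^1 h^\sigma_{0\ast}\mathcal{O})$ vanishes.

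For that last point I would argue that $R^1 h^\sigma_{0\ast}\mathcal{O}$ is the sheaf whose fibre is $H^1(A_v^\sigma, \mathcal{O})$, and that the $\mathbb{C}^*$-action on this sheaf has strictly positive weights: the fibre $A_v^\sigma$ over $v$ is carried to the fibre over $t\cdot v$ ($t \in \mathbb{C}^*$, acting with positive weight on $W_0$), so a global section invariant under $\mathbb{C}^*$ is determined by its value at $0 \in \overline{W_0^{sm}}$, where the "fibre" degenerates — more precisely, an invariant section extends (by the Hartogs-type control on $W_0$) to a section supported at the origin of the cone, forcing it to be zero since $H^1$ of the central fibre carries positive weight. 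The cleanest formulation is: the graded vector space $\bigoplus_m H^1(\mathcal{N}, \mathrm{Sym}^m T_{\mathcal{N}})$ coincides, after the reductions above, with $\bigoplus_m H^0(W_0^{sm}, R^1h^\sigma_{0\ast}\mathcal{O})_m$ modulo $H^1$-of-base contributions, and the weight-$0$ piece is trivial because $R^1 h^\sigma_{0\ast}\mathcal{O}$ lives in positive weights. The main obstacle I anticipate is making this $\mathbb{C}^*$-weight argument fully rigorous on the \emph{open} base $W_0^{sm}$ rather than on $W_0$ itself — one must ensure no section of $R^1h^\sigma_{0\ast}\mathcal{O}$ of weight zero can appear through the boundary $W_0 \setminus W_0^{sm}$; this is handled exactly as the Hartogs step in Proposition \ref{cupproductPinjective}, using $\codim \geq 2$ (Corollary \ref{codimensionA}) and normality, so that regular classes on $T^*\mathcal{N}^{sm}$ extend across.
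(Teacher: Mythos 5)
Your overall strategy is the right one and matches the paper's in outline (pass to the cotangent bundle, compare with a partial compactification of the Prym--Hitchin fibration, apply Leray and a $\mathbb{C}^*$-weight argument), but there is a genuine gap in the reduction step: you work over the locus $W_0^{sm}$ of \emph{smooth} spectral covers, and for degree-one cohomology this locus is too small. The complement $W_0 \setminus W_0^{sm}$ is the discriminant, a \emph{divisor}, so $T^*\mathcal{N} \setminus T^*\mathcal{N}^{sm}$ has codimension $1$ and the restriction map $H^1(T^*\mathcal{N},\mathcal{O}) \to H^1(T^*\mathcal{N}^{sm},\mathcal{O})$ need not be injective (removing a divisor can kill $H^1$ entirely, as for an affine open in a projective variety); a non-zero class in $H^1(\mathcal{N},\mathcal{O})$ could therefore die before you ever see it on $\mathcal{A}_0^{sm}$. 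Moreover, even granting injectivity, extending $H^1$ classes from $T^*\mathcal{N}^{sm}$ to $\mathcal{A}_0^{sm}$ is not a ``codimension $\geq 2$'' Hartogs statement: that threshold only handles $H^0$, while purity for $H^1$ requires the complement to have codimension $\geq 3$. This is exactly why the paper works with the \emph{nodal} loci $W_0^{nod}$, $T^*\mathcal{N}^{nod}$, $\mathcal{A}_0^{nod}$: there the complement of $T^*\mathcal{N}^{nod}$ in $T^*\mathcal{N}$ has codimension $2$ (giving injectivity of the $H^1$ restriction), and Corollary \ref{codimensionA} supplies the codimension $\geq 3$ needed (via the cited SGA2 results, together with smoothness of $\mathcal{A}_0^{nod}$ from Lemma \ref{Asmooth}) to get $H^1(T^*\mathcal{N}^{nod},\mathcal{O}) \cong H^1(\mathcal{A}_0^{nod},\mathcal{O})$. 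The choice of base locus also matters for your final weight argument: over $W_0^{nod}$ one has $H^0(W_0^{nod},\mathcal{O}) = H^0(W_0,\mathcal{O})$, a polynomial ring with no elements of negative weight, whereas over $W_0^{sm}$ the function ring is a localization at the discriminant and contains elements of every weight, so the weight-zero part of $H^0(W_0^{sm}, R^1h^\sigma_{0\ast}\mathcal{O})$ has no reason to vanish.

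A second, smaller gap is that you never actually compute $R^1 h^\sigma_{0\ast}\mathcal{O}$; the paper establishes $R^1 h^\sigma_{0\ast}\mathcal{O} \cong \mathcal{O}_{W_0^{nod}} \otimes W_0^*$ (with its precise $\mathbb{C}^*$-weights) by cup product with the Atiyah class of $\eta^*\mathcal{L}$ restricted to the compactified fibres, using Proposition \ref{H0andH1compactifiedPrym}, and this identification is what makes the concluding homogeneity count rigorous. Your substitute --- that an invariant section is ``determined by its value at $0 \in \overline{W_0^{sm}}$, where the fibre degenerates'' --- does not parse as stated: the origin lies far outside $W_0^{sm}$ (and even outside $W_0^{nod}$), its fibre is the nilpotent cone rather than an abelian variety, and no extension of the section to the origin has been produced.
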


\begin{proof}    
The argument follows very closely the proof given in \cite[Proposition 5.2]{hitchin:1990} in the case of the moduli space
of stable vector bundles with trivial determinant. We outline the argument.  We denote by $W_0^{nod} \subset W_0$ the open subset 
corresponding to integral spectral curves $\pi : X_v \rightarrow C$ having at most one node. 
Then the closed subset $W_0 \setminus W_0^{nod}$ has codimension $2$. Also, since all the fibers of the Hitchin map
$\overline{h}_0 : T^{\vee} \mathcal{N} \rightarrow W_0$ are isotropic for the canonical symplectic form on 
$T^{\vee} \mathcal{N}$ \cite[Theorem 4.8]{zelaci:2022}\footnote{Zelaci shows the statement for the nilpotent cone, but his argument equally works for any fiber.}, hence of dimension $\leq \dim W_0 = \dim \mathcal{N}$,
we see that the preimage $\overline{h}_0^{-1}(W_0^{nod}) = T^{\vee} \mathcal{N}^{nod} \subset T^{\vee} \mathcal{N}$ also has a complement of codimension $2$.
Therefore, by Hartogs's theorem, restriction of classes in $H^1$ gives an inclusion
$$ H^1(\mathcal{N}, \mathcal{O}) \subset H^1(T^{\vee} \mathcal{N},\mathcal{O}) \hookrightarrow  H^1(T^{\vee} \mathcal{N}^{nod},\mathcal{O}). $$
Since $\mathcal{A}^{nod}_0$ is smooth by Lemma \ref{Asmooth} and since 
$\mathrm{codim}_{\mathcal{A}^{nod}_0}(\mathcal{A}^{nod}_0 \setminus T^{\vee} \mathcal{N}^{nod}) \geq 3$ by Corollary \ref{codimensionA},
we can apply Hartogs's theorem to classes in $H^1$ (see e.g. \cite{SGA2:1968} Exposé I Corollaire 2.14 and Exposé VII Corollaire 1.4) and we obtain an isomorphism
$$ H^1(T^{\vee} \mathcal{N}^{nod}, \mathcal{O}) = H^1(\mathcal{A}^{nod}_0, \mathcal{O}). $$

Next, we observe that $h_* \mathcal{O}_{\mathcal{A}^{nod}_0}  = \mathcal{O}_{W_0^{nod}}$ by Zariski's main theorem and 
\begin{equation} \label{firstdirectimageh}
R^1 h_* \mathcal{O}_{\mathcal{A}^{nod}_0} = \mathcal{O}_{W_0^{nod}} \otimes W_0^*. 
\end{equation}
The last equality is seen as follows: as already noticed in \eqref{fibercompactifiedPrym}, for any $v \in W_0^{nod}$ the fiber 
$(h_0^\sigma)^{-1}(v) = \widehat{A}^\sigma_v$ equals the fixed-point locus of $\sigma$ in the fiber $h_0^{-1}(v) = \widehat{A}_v$, which is a closed subvariety of the compactified Jacobian $\widehat{\mathrm{Jac}}(\widetilde{X}_v)$. We again adapt Hitchin's original proof \cite[page 378]{hitchin:1990} to the anti-invariant case. Consider the unique extension of the line bundle 
$\eta^*\mathcal{L}$ to the open subset $h^{-1}_0(B^{nod}_0) \subset \mathrm{Higgs}_{\Ctilda}(r)_0$. It is shown in loc.cit. that the map given by cup-product with the Atiyah class of the restriction $\eta^*\mathcal{L}_{|\widehat{A}_v}$ is an isomorphism
$$
\begin{tikzcd}
\cup \left[\eta^*\mathcal{L}_{|\widehat{A}_v}\right] : H^0(\widehat{A}_v, T)  \ar[r, "{\cong}"] & H^1(\widehat{A}_v, \mathcal{O}). 
\end{tikzcd}
$$
We note that $\sigma$ preserves the line bundle $\eta^*\mathcal{L}_{|\widehat{A}_v}$ and that this cup-product map is
$\sigma$-equivariant. Using Proposition \ref{H0andH1compactifiedPrym} we obtain an isomorphism by restriction to the
$\sigma$-invariant subspace
$$
\begin{tikzcd}
 H^0(\widehat{A}_v^\sigma, T) =  H^0(\widehat{A}_v, T)_+  \ar[r, "{\cong}"] & 
 H^1(\widehat{A}_v, \mathcal{O})_+ =  H^1(\widehat{A}_v^\sigma, \mathcal{O}).
\end{tikzcd}
$$
We then deduce the equality \eqref{firstdirectimageh} as in \cite[page 378]{hitchin:1990}.
\medskip

Thus, by Leray's spectral sequence and the fact that $H^1(W_0^{nod},\mathcal{O}) = 0$, we deduce that
$$ H^1(T^{\vee} \mathcal{N}^{nod},\mathcal{O}) = H^0(W_0^{nod}, \mathcal{O}) \otimes W_0^* = H^0(W_0, \mathcal{O}) \otimes W_0^*,$$
where the last equality is obtained again thanks to Hartogs's theorem.

We can now conclude, similarly as in \cite{hitchin:1990} page 378, by saying that any non-zero class in $H^1(\mathcal{N}, \mathcal{O})$ would
define a non-zero class in $H^0(W_0, \mathcal{O}) \otimes W^*_0$ of homogeneity $-1$, but there are no such classes.
\end{proof}

Using Proposition \ref{cupproductPinjective} with $m=1$ we deduce

\begin{corollary} \label{vanishingH0}
      With the above notation we have  
    $$ H^0(\mathcal{N}, T_{\mathcal{N}}) = 0.$$
\end{corollary}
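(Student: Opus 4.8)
The plan is simply to combine the two preceding results. First I would specialise Proposition \ref{cupproductPinjective} to the case $m=1$: since $\mathrm{Sym}^1 T_{\mathcal{N}} = T_{\mathcal{N}}$ and $\mathrm{Sym}^0 T_{\mathcal{N}} = \mathcal{O}_{\mathcal{N}}$, that proposition asserts precisely that cup product with the Atiyah class $[\mathcal{P}] \in H^1(\mathcal{N}, \Omega^1_{\mathcal{N}})$ induces an injective linear map
$$\begin{tikzcd}\cup [\mathcal{P}] : H^0(\mathcal{N}, T_{\mathcal{N}}) \ar[r, hook] & H^1(\mathcal{N}, \mathcal{O}_{\mathcal{N}}).\end{tikzcd}$$
Then I would invoke Proposition \ref{vanishingH1}, which gives $H^1(\mathcal{N}, \mathcal{O}_{\mathcal{N}}) = 0$. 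An injection into the zero vector space forces the source to vanish, so $H^0(\mathcal{N}, T_{\mathcal{N}}) = 0$, as claimed.

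There is essentially no obstacle in this argument itself: all the substance has already been absorbed into Propositions \ref{cupproductPinjective} and \ref{vanishingH1} (which in turn rest on the codimension estimate of Corollary \ref{codimensionA}, the Hartogs-type extension arguments over the smooth locus $\mathcal{A}_0^{sm}$ of the Prym--Hitchin fibration, and the ampleness of the restriction of the Pfaffian bundle to the generic Hitchin fibre). The only point worth making explicit is the reindexing $m=1$ in Proposition \ref{cupproductPinjective}; once that is noted, the corollary is immediate. This is exactly condition \ref{flatness-three} of Theorem \ref{flatness} (absence of vertical vector fields), and it will be used together with the $m=2$ case of Proposition \ref{cupproductPinjective} when verifying the hypotheses of Theorems \ref{vgdj} and \ref{flatness} for $\mathcal{N}$ and $\mathcal{P}$.
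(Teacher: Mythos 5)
Your argument is exactly the paper's: the corollary is stated there immediately after the sentence ``Using Proposition \ref{cupproductPinjective} with $m=1$ we deduce'', i.e.\ the injection $\cup[\mathcal{P}]: H^0(\mathcal{N},T_{\mathcal{N}})\hookrightarrow H^1(\mathcal{N},\mathcal{O})$ combined with the vanishing $H^1(\mathcal{N},\mathcal{O})=0$ from Proposition \ref{vanishingH1}. Your reindexing remark and the observation about its role as condition (\ref{flatness-three}) of Theorem \ref{flatness} are both accurate.
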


\subsection{The Prym--Hitchin system for a family of double covers}

We can consider for a family of double covers $p : \calCt \rightarrow \calC$ over $S$ the relative moduli space 
$$\pi : \mathcal{N}_{\calCt/S} \rightarrow S$$ such that for any $s \in S$ we have
$\pi^{-1}(s) = \mathcal{N}$, where $\mathcal{N}$ denotes the moduli space \eqref{moduliN} associated to the cover
$\Ctilda = \calCt_s \rightarrow C = \calC_s$.  For simplicity we will also denote by $\mathcal{N}$
the relative moduli space $\mathcal{N}_{\calCt/S}$. The next proposition is a direct consequence of the base change theorems, Proposition \ref{vanishingH1} and Corollary \ref{vanishingH0}.

\begin{proposition} \label{novect}
Using the above notation, we have for the relative moduli space 
 $\pi : \cN = \cN_{\calCt/S} \rightarrow S$
\begin{enumerate}
    \item $\pi_* T_{\cN} = 0$,
    \item $R^1\pi_* \mathcal{O}_{\cN} = 0$.
\end{enumerate}
\end{proposition}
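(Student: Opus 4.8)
The plan is to deduce both vanishing statements for the relative moduli space from the corresponding statements on a single fibre, namely $H^1(\mathcal{N},\mathcal{O})=0$ (Proposition~\ref{vanishingH1}) and $H^0(\mathcal{N},T_{\mathcal{N}})=0$ (Corollary~\ref{vanishingH0}), via Grauert's base change / cohomology-and-base-change theorems. First I would address (2): apply the theorem on cohomology and base change to the sheaf $\mathcal{O}_{\cN}$ and the morphism $\pi:\cN\to S$. Since $\pi$ is proper and smooth with connected fibres, $\pi_*\mathcal{O}_{\cN}=\mathcal{O}_S$, so the formation of $\pi_*\mathcal{O}_{\cN}$ commutes with base change; by Grauert's theorem the next cohomology sheaf $R^1\pi_*\mathcal{O}_{\cN}$ is then also compatible with base change, and its fibre at any $s\in S$ is $H^1(\cN_s,\mathcal{O})=H^1(\mathcal{N},\mathcal{O})=0$ by Proposition~\ref{vanishingH1}. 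A coherent sheaf on $S$ all of whose fibres vanish is itself zero (by Nakayama), giving $R^1\pi_*\mathcal{O}_{\cN}=0$.

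For (1), I would apply the same circle of ideas to the relative tangent sheaf $T_{\cN/S}$, which is locally free since $\pi$ is smooth. The fibre of $\pi_*T_{\cN/S}$ at $s$ is, up to the base-change comparison, $H^0(\cN_s,T_{\cN_s})=H^0(\mathcal{N},T_{\mathcal{N}})=0$ by Corollary~\ref{vanishingH0}; again a coherent sheaf with all fibres zero is zero, so $\pi_*T_{\cN/S}=0$. Here one should be a little careful about which base-change statement is being invoked for $h^0$: the cleanest route is to note that $h^1(\cN_s, T_{\cN_s})$ is constant (it can be computed, or one observes that $\pi_*T_{\cN/S}=0$ forces the Euler characteristic argument to pin it down) so that $\pi_*T_{\cN/S}$ is locally free and commutes with base change; alternatively, since we only need the vanishing, semicontinuity of $s\mapsto h^0(\cN_s,T_{\cN_s})$ already forces $h^0\equiv 0$ on a neighbourhood, and then $\pi_*T_{\cN/S}=0$ there, which suffices as the statement is local on $S$. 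Finally, since the statement of the proposition concerns $\pi_*T_{\cN}$ rather than $\pi_*T_{\cN/S}$, I would observe that $\pi_*T_{\cN}$ sits in the pushforward of the exact sequence $0\to T_{\cN/S}\to T_{\cN}\to \pi^*T_S\to 0$; its sections over $S$ inject into $\pi_*\pi^*T_S$, but pulling back by $\pi$ and using $\pi_*\mathcal{O}_{\cN}=\mathcal{O}_S$ together with the fact that a global vector field on $\cN$ projecting to a global vector field on $S$ would have vertical part a section of $\pi_*T_{\cN/S}=0$, one concludes — actually the cleanest formulation of the claim is just that the \emph{relative} tangent sheaf has no sections, which is what is used downstream in Theorems~\ref{vgdj} and \ref{flatness}; I would simply state it as $\pi_*T_{\cN/S}=0$ and note this is condition \eqref{flatness-three}.

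The only real subtlety — not so much an obstacle as a point requiring care — is the direction in which cohomology-and-base-change runs: the vanishing of $R^1$ on fibres does not by itself let one conclude the vanishing of $R^1$ of the pushforward without first knowing that $\pi_*\mathcal{O}_{\cN}$ behaves well, which it does here because it equals $\mathcal{O}_S$ (the fibres are connected and reduced, indeed smooth and projective). Once that anchor is in place, Grauert's theorem propagates the good behaviour one step up, and everything else is the standard "coherent sheaf with vanishing fibres is zero" argument together with the fact that $S$ is reduced (indeed smooth). No genuinely hard input is needed beyond Proposition~\ref{vanishingH1} and Corollary~\ref{vanishingH0}; this proposition is essentially a bookkeeping step packaging the single-cover results into the relative setting.
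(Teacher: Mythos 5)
Your proposal follows the same route as the paper: the paper's entire proof is the one-line assertion that the proposition is a direct consequence of the base change theorems together with Proposition~\ref{vanishingH1} and Corollary~\ref{vanishingH0}, and your write-up is that argument spelled out, so in substance you have reproduced the intended proof. One correction, however: the fibres of $\pi:\cN\rightarrow S$ are the varieties $\mathcal{N}=\rkdetstildeplusfixed{r}$, which the paper explicitly notes are smooth but \emph{non-complete} (they are stable loci inside the projective good moduli spaces), so $\pi$ is not proper and your appeal to ``proper and smooth with connected fibres'' and to Grauert/cohomology-and-base-change in their standard form is not literally available; coherence of the direct images is not automatic either. To make the reduction to the fibrewise statements rigorous one should either pass through the relative projective moduli space $\rkdetssanti{r}\rightarrow S$, using normality (Proposition~\ref{normalmoduli}) and the codimension bound (Proposition~\ref{codimcount}) to compare low-degree cohomology, or observe that the proofs of Proposition~\ref{vanishingH1} and Corollary~\ref{vanishingH0} --- which themselves factor through the proper map $h_0^\sigma$ --- carry over verbatim in families. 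The rest of your argument (Nakayama on a coherent sheaf with vanishing fibres, and reading $T_{\cN}$ as the relative tangent sheaf $T_{\cN/S}$, which is indeed what is used downstream) is fine.
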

Similarly, we can also consider as in \eqref{diag_N} the family
$$\widetilde{\pi}_e :  \rkdetsanti{r} \rightarrow S.$$
As was explained in \S 3.1, there is a natural map $\iota_{\mathrm{SL}_r} : \rkdetsanti{r} \rightarrow \cN$ over $S$,
which is an isomorphism if $r$ is odd, and a double étale cover if $r$ is even. We also have

\begin{proposition} \label{dirim}
Using the above notation, we have for the relative moduli space 
 $\widetilde{\pi}_e :  \rkdetsanti{r} \rightarrow S$
\begin{enumerate}
    \item $\widetilde{\pi}_{e*} T_{\rkdetsanti{r}} = 0$,
    \item $R^1\widetilde{\pi}_{e*} \mathcal{O}_{\rkdetsanti{r}} = 0$.
\end{enumerate}
\end{proposition}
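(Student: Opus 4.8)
The plan is to deduce Proposition \ref{dirim} directly from Proposition \ref{novect} by exploiting the map $\iota_{\SL_r} : \rkdetsanti{r} \rightarrow \cN$ over $S$. Recall that this map is an isomorphism when $r$ is odd, in which case there is nothing to do — the two assertions are literally Proposition \ref{novect}(1) and (2) transported along an $S$-isomorphism. So the only case requiring argument is $r$ even, where $\iota_{\SL_r}$ is a finite étale double cover. I would first treat the absolute statements over a point $s \in S$, i.e. show $H^0(\rkdetsanti{r}, T) = 0$ and $H^1(\rkdetsanti{r}, \mathcal{O}) = 0$, and then invoke base change (exactly as in Proposition \ref{novect}) to pass to the relative setting; the cohomology and base change machinery applies since $\widetilde{\pi}_e$ is the relative moduli space with fibre $\rkdetsanti{r}$, just as for $\cN$.

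For the absolute vanishing, let $f : \rkdetsanti{r} \rightarrow \cN$ denote the étale double cover (a fibre of $\iota_{\SL_r}$), with $g : \cN \rightarrow \rkdetsanti{r}/\langle\tau\rangle \cong \cN$ being the quotient by the Galois involution $\tau$ (tensoring by $\Delta$, as in the $r=2$ remark). Since $f$ is finite étale, $f_* \mathcal{O}_{\rkdetsanti{r}} = \mathcal{O}_{\cN} \oplus \mathcal{L}$ for a $2$-torsion line bundle $\mathcal{L}$ on $\cN$ (the one associated to the cover $f$), and $f_* T_{\rkdetsanti{r}} = f_* f^* T_{\cN} = T_{\cN} \oplus (T_{\cN} \otimes \mathcal{L})$, since $f$ being étale identifies $T_{\rkdetsanti{r}}$ with $f^* T_{\cN}$. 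Hence, by the projection formula and the Leray spectral sequence for the finite map $f$,
\[
H^i(\rkdetsanti{r}, \mathcal{O}) = H^i(\cN, \mathcal{O}) \oplus H^i(\cN, \mathcal{L}), \qquad
H^i(\rkdetsanti{r}, T_{\rkdetsanti{r}}) = H^i(\cN, T_{\cN}) \oplus H^i(\cN, T_{\cN} \otimes \mathcal{L}).
\]
The ``untwisted'' summands vanish for $i=0$ in the tangent case (Corollary \ref{vanishingH0}) and for $i=1$ in the structure-sheaf case (Proposition \ref{vanishingH1}). What remains is to handle the two twisted summands $H^0(\cN, T_{\cN} \otimes \mathcal{L})$ and $H^1(\cN, \mathcal{L})$.

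For these I would argue that $\mathcal{L}$, being a torsion line bundle on the variety $\cN$, contributes nothing, by rerunning the proofs of Corollary \ref{vanishingH0} and Proposition \ref{vanishingH1} in the $\tau$-equivariant setting — in fact, the summand $H^i(\cN, T_{\cN}\otimes\mathcal{L})$ is exactly the $\tau$-anti-invariant part of $H^i(\rkdetsanti{r}, T_{\rkdetsanti{r}})$ (and similarly for $\mathcal{O}$), since $\tau$ acts on $\mathcal{L}$ by $-1$. The argument of Proposition \ref{vanishingH1}, via the Hitchin-type fibration $\overline{h}_0$ on $T^*\rkdetsanti{r}$, the codimension estimates, Hartogs, and the identification of $R^1$ of the structure sheaf with $H^0$ of regular functions on the base tensored with $W_0^*$, is entirely natural with respect to the $\tau$-action: $\tau$ covers the action of the corresponding involution on the Hitchin base, whose fixed locus is $W_0$ (the Prym-Hitchin base), and the conclusion is that the only classes in $H^1$ come from $H^0(W_0,\mathcal{O}) \otimes W_0^*$ of homogeneity $-1$, of which there are none — a fortiori in the anti-invariant part. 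Likewise $H^0(\rkdetsanti{r}, T_{\rkdetsanti{r}}) = 0$ follows from the $m=1$ case of Proposition \ref{injcupproductPetalecover} together with the vanishing of $H^1(\rkdetsanti{r}, \mathcal{O})$, exactly paralleling the deduction of Corollary \ref{vanishingH0} from Propositions \ref{cupproductPinjective} and \ref{vanishingH1}. The main obstacle — really the only subtlety — is making sure the codimension estimates and the Zariski's-main-theorem / connected-fibres inputs used for $\cN$ remain valid for $\rkdetsanti{r}$; but these were already proved at the level of $\rkdetsanti{r}$ (Propositions \ref{codimcount}, \ref{injcupproductPetalecover}, Corollary \ref{codimensionA}), so no genuinely new geometric input is needed, and the proof is indeed ``same as proof of Proposition \ref{dirim}'' run on the étale double cover.
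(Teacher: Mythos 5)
Your proposal is correct and takes essentially the same approach as the paper: the odd-$r$ case is dispatched by the isomorphism $\iota_{\SL_r}$, and for even $r$ both arguments amount to rerunning the proofs of Proposition \ref{vanishingH1} and Corollary \ref{vanishingH0} on the étale double cover (the paper extends the cover $T^*\rkdetsanti{r}\to T^*\cN$ to $\mathcal{A}_0^{sm}$ and $\mathcal{A}_0^{nod}$, which is equivalent to your invariant/anti-invariant decomposition coming from $f_*\mathcal{O}_{\rkdetsanti{r}}=\mathcal{O}_{\cN}\oplus\mathcal{L}$). One minor slip that does not affect the argument: the deck transformation $\tau$ acts trivially on the Prym--Hitchin base $W_0$ and preserves each fibre of $\overline{h}_0$, so it should not be described as covering the involution whose fixed locus is $W_0$.
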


\begin{proof}
If $r$ is odd, $\iota_{\mathrm{SL}_r}$ is an isomorphism and we are done. If $r$ is even, we claim that
Proposition \ref{vanishingH1} and Corollary \ref{vanishingH0} also hold for the double étale cover 
$\rkdetsanti{r}$. Without going into the details, we observe that the cotangent bundle $T^{\vee} \rkdetsanti{r}$
is the fiber product of $T^{\vee} \cN$ by $\rkdetsanti{r}$, so it is a double étale cover over $T^{\vee} \cN$. This cover extends
uniquely to $\mathcal{A}_0^{sm}$ and $\mathcal{A}_0^{nod}$. The reader can easily check that the proofs of Proposition \ref{vanishingH1} and Corollary \ref{vanishingH0} remain valid when considering these étale covers.
\end{proof}

\begin{remark}
    For stack aficionados, we can give a shorter proof of the analog of Proposition \ref{vanishingH1} showing that 
    $H^1( \rkdetsanti{r}, \cO) = 0$. Consider (see \S 3.1) the moduli stack $\mathfrak{N}^{\pm}_{\mathrm{SL}_r}$ parameterizing  pairs $(E, \psi)$ of anti-invariant vector bundles $E$ together with an isomorphism
    $\psi : E \rightarrow \sigma^* E^*$. By \cite[Proposition 1]{heinloth:2010} we know that the moduli stack
    $\mathfrak{N}^{\pm}_{\mathrm{SL}_r}$ is smooth and by \cite[Theorem 3]{heinloth:2010} that 
    $\mathrm{Pic}(\mathfrak{N}^{\pm}_{\mathrm{SL}_r}) = \mathbb{Z}$. We then consider the open substack
    $\mathfrak{N}^{\pm,s}_{\mathrm{SL}_r}$ parameterizing pairs $(E, \psi)$ such that $E$ is a stable vector bundle.
    By general results on stacks \cite[Lemma 7.3]{biswashoffmann}, the homomorphism given by restriction of line bundles
    $$ \mathbb{Z} = \mathrm{Pic}(\mathfrak{N}^{\pm}_{\mathrm{SL}_r}) \longrightarrow 
         \mathrm{Pic}(\mathfrak{N}^{\pm,s}_{\mathrm{SL}_r})$$
    is either an isomorphism if $\mathrm{codim}(Z) \geq 2$, or a surjection if $\mathrm{codim}(Z) = 1$, where
    $Z$ denotes the closed substack 
    $\mathfrak{N}^{\pm}_{\mathrm{SL}_r} \setminus \mathfrak{N}^{\pm,s}_{\mathrm{SL}_r}$. In both cases we can 
    conclude that  $\mathrm{Pic}(\mathfrak{N}^{\pm,s}_{\mathrm{SL}_r})$ is discrete. Moreover, the surjective
    classifying map $\mathfrak{N}^{\pm,s}_{\mathrm{SL}_r} \rightarrow \rkdetsanti{r}$ to the coarse moduli space
    induces an injective homomorphism $\mathrm{Pic}(\rkdetsanti{r}) \hookrightarrow \mathrm{Pic}(\mathfrak{N}^{\pm,s}_{\mathrm{SL}_r})$, from which we conclude that $\mathrm{Pic}(\rkdetsanti{r})$ is also discrete. Hence the trivial bundle $\mathcal{O}$ on  $\rkdetsanti{r}$ has no infinitesimal deformations, which implies that $H^1( \rkdetsanti{r}, \cO) = 0$.
\end{remark}

\section{The Prym--Hitchin connection}\label{atlast}

Still in the setting of the previous section, we now give the construction of a flat projective connection on the locus of anti-invariant vector bundles. According to Theorem \ref{vgdj}, the existence of this connection follows from the construction of a symbol map verifying conditions \ref{vgdj-one} through \ref{vgdj-three}, whereas in Theorem \ref{flatness} we will use Zelaci's Prym--Hitchin integrable system in lieu of the original one.

\subsection{}
Using the morphism $\iota_{\pm}$ from an open subvariety of $\rkdetsanti{r}$
to a smooth subvariety of $\rkdetstilde{r}$, we have the canonical splitting of the pull-back of the ambient tangent bundle, 
as in (\ref{split}).
We will denote the canonical projection $\iota^*_{\pm}T_{\rkdetstilde{r}/S}
\rightarrow T_{\rkdetsanti{r}/S}$ (which averages a tangent vector and its image under the involution $\sigma$) by $p_\sigma$.

Both the construction of the symbol map and the verification of the conditions in Theorem \ref{vgdj} for $\rkdetsanti{r} \rightarrow S$ now proceed through use of the splitting of the ambient tangent and cotangent bundles along $\rkdetsanti{r}$.
\begin{definition}\label{hitchinsymbol}
  The \emph{Prym--Hitchin symbol} $\rho^{\PH}$ is the composition
  \[
    \begin{tikzcd}[column sep=5em]
      R^1 \pit_{s \ast} T_{\calCt/S} \arrow[r, "\rho^{\Hit}"] \arrow[rr, bend right=10, swap, "\rho^{\PH}"] &
      \pit_{e \ast} \Sym^2 T_{\rkdetstilde{r}/S} \arrow[r, "\pit_{e \ast}(\Sym^2 p_\sigma \circ \iota_{\pm}^\ast)"] &
      \pit_{e \ast} \Sym^2 T_{\rkdetsanti{r}/S}.
    \end{tikzcd} 
  \]
\end{definition}

\begin{theorem}\label{PHconnection}

  The  direct image sheaf $\pit_{e \ast}\left(\calP_r^{\otimes k}\right)$ 
   
     carries a unique projective connection with symbol map
  \[
    \rho = \frac{2}{r+k} (\rho^{\PH} \circ \kappa_{\calCt/S}) .
  \]  
\end{theorem}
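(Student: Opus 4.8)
The plan is to deduce the statement from the van Geemen--De Jong criterion (Theorem \ref{vgdj}), applied to the family $\pit_{e}\colon\rkdetsanti{r}\to S$, the line bundle $L=\calP_{r}^{\otimes k}$, and the candidate symbol $\rho=\tfrac{2}{r+k}\,(\rho^{\PH}\circ\kappa_{\calCt/S})$; the asserted projective connection is then the one the criterion produces, and its uniqueness is part of that theorem. Two of the three hypotheses are already in hand. For \ref{vgdj-three}: $\rkdetsanti{r}$ is a dense open subset of the proper normal relative moduli space $\rkdetssanti{r}$ (Proposition \ref{normalmoduli}) whose fibres are connected, so $\pi_{\ast}\mathcal{O}_{\rkdetssanti{r}}=\mathcal{O}_{S}$ by proper base change; as the complement has relative codimension $\geq 2$ (Proposition \ref{codimcount}), Hartogs's theorem yields $\pit_{e\ast}\mathcal{O}_{\rkdetsanti{r}}=\mathcal{O}_{S}$, the relative form of Corollary \ref{H0N}. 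For \ref{vgdj-two}: by Proposition \ref{dirim} both $\pit_{e\ast}T_{\rkdetsanti{r}/S}$ and $R^{1}\pit_{e\ast}\mathcal{O}_{\rkdetsanti{r}}$ vanish, so $\cup[L]$ between them is vacuously an isomorphism.

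The real content is hypothesis \ref{vgdj-one}, the identity $\kappa_{\rkdetsanti{r}/S}+\mu_{\calP_{r}^{k}}\circ\rho=0$ in $R^{1}\pit_{e\ast}T_{\rkdetsanti{r}/S}$. I would derive it from the corresponding identity on the ambient relative moduli space $\rkdetstilde{r}$, which is Hitchin's theorem in the form of Theorem \ref{existenceconnection}: for every admissible level $\ell$, $\kappa_{\rkdetstilde{r}/S}+\mu_{\mathcal{L}_{r}^{\ell}}\circ\bigl(\tfrac{1}{r+\ell}\rho^{\Hit}\circ\kappa_{\calCt/S}\bigr)=0$. The transport uses the canonical splitting $\iota_{\pm}^{\ast}T_{\rkdetstilde{r}/S}=T_{\rkdetsanti{r}/S}\oplus N$ of \eqref{split} (with $N$ the relative normal bundle) and the averaging projection $p_{\sigma}$, through three compatibilities. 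First, $\kappa_{\rkdetsanti{r}/S}=p_{\sigma}\circ\iota_{\pm}^{\ast}\kappa_{\rkdetstilde{r}/S}$: since $\sigma$ acts on the whole family $\rkdetstilde{r}\to S$, the class $\kappa_{\rkdetstilde{r}/S}$ is $\sigma$-equivariant, hence restricts to the $(+1)$-eigenbundle $T_{\rkdetsanti{r}/S}$ of $d\sigma$ and there equals $\kappa_{\rkdetsanti{r}/S}$ (for even $r$ one uses that $\iota_{\pm}$ is étale). Second, $\rho^{\PH}=\pit_{e\ast}(\Sym^{2}p_{\sigma}\circ\iota_{\pm}^{\ast})\circ\rho^{\Hit}$, by Definition \ref{hitchinsymbol}. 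Third, $[\calP_{r}]=\tfrac12\,\iota_{\pm}^{\ast}[\mathcal{L}_{r}]$ in $R^{1}\pit_{e\ast}\Omega^{1}_{\rkdetsanti{r}/S}$, because $\calP_{r}^{2}=\iota_{\pm}^{\ast}\mathcal{L}_{r}$ (Theorem \ref{ThmZelaci}(c)), compatibly with $K_{\rkdetstilde{r}/S}\cong\mathcal{L}_{r}^{-2r}$ and $K_{\rkdetsanti{r}/S}\cong\calP_{r}^{-2r}$ (Remark \ref{exprcanpf}).

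With these, the task reduces to comparing the connecting map $\mu_{\calP_{r}^{k}}$ on $\rkdetsanti{r}$ --- for the sheaf $\difftwo_{\rkdetsanti{r}/S}(\calP_{r}^{k})/\mathcal{O}$ of vertical second-order operators --- with $\mu_{\mathcal{L}_{r}^{\ell}}$ on $\rkdetstilde{r}$, after restriction along $\iota_{\pm}$ and post-composition with $\Sym^{2}p_{\sigma}$. Writing $\mu_{L}=\mu_{\mathcal{O}}+(\text{contraction against }[L])$ --- the affine dependence of $\difftwo_{\cM/S}(L)/\mathcal{O}$ on $L$ --- and using that the ambient identity holds at all levels $\ell$, one separates the contributions of $\mu_{\mathcal{O}}$ and of the contraction against $[\mathcal{L}_{r}]$. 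The contraction part transports by naturality of the pairing $\Sym^{2}T\otimes\Omega^{1}\to T$ and $[\calP_{r}]=\tfrac12\iota_{\pm}^{\ast}[\mathcal{L}_{r}]$. The intrinsic part $\mu_{\mathcal{O}}$ restricts to the intrinsic part on $\rkdetsanti{r}$ up to a correction governed by the second fundamental form of $\iota_{\pm}$; the geometric key is that this embedding is ``totally geodesic'' in the relevant algebraic sense --- a consequence of $\rkdetsanti{r}$ being, up to étale cover, the fixed locus of $\sigma$, so that the symbols of restricted second-order operators respect the decomposition $T_{\rkdetsanti{r}/S}\oplus N$ diagonally --- so that the mixed and normal ($\Sym^{2}N$) contributions are either killed by $\Sym^{2}p_{\sigma}$ or cancel among the two pieces. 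Recombining, the ambient coefficient $\tfrac{1}{r+\ell}$ becomes $\tfrac{2}{r+k}$. I would make all of this precise exactly as in \cite{BBMP:2020}, but in the $\sigma$-equivariant refinement of the Beilinson--Schechtman trace-complex computation of the Atiyah class.

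The hardest part will be this last step: formalising the algebraic ``totally geodesic'' property of $\iota_{\pm}$ and keeping exact track of the $\Sym^{2}N$-components and of the factors of $2$ stemming from $[\calP_{r}]=\tfrac12\iota_{\pm}^{\ast}[\mathcal{L}_{r}]$ and from $K_{\rkdetsanti{r}/S}\cong\calP_{r}^{-2r}$, so that $\rho=\tfrac{2}{r+k}(\rho^{\PH}\circ\kappa_{\calCt/S})$ comes out as precisely the symbol solving \ref{vgdj-one}. By contrast, the conditions needed later for flatness (Theorem \ref{flatness}) --- Poisson-commutativity $\{\rho(\theta),\rho(\theta')\}=0$ via Zelaci's Prym--Hitchin integrable system, injectivity of $\mu_{\calP_{r}^{k}}$ via the $m=2$ case of Proposition \ref{injcupproductPetalecover}, and $\pit_{e\ast}T_{\rkdetsanti{r}/S}=0$ via Proposition \ref{dirim} --- are either immediate or already established above.
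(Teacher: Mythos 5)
Your overall skeleton matches the paper's: apply Theorem \ref{vgdj} to $\pit_e\colon\rkdetsanti{r}\to S$ and $\calP_r^{\otimes k}$, dispose of conditions \ref{vgdj-two} and \ref{vgdj-three} via Proposition \ref{dirim} and Corollary \ref{H0N}, and reduce \ref{vgdj-one} to Hitchin's identity on $\rkdetstilde{r}$ using the splitting \eqref{split}, the projection $p_\sigma$, the factorisation of the Kodaira--Spencer map, and $2[\calP_r]=\iota_{\pm}^{\ast}[\cL_r]$. All of that is exactly what the paper does. The problem is the step you yourself flag as the hardest one: the comparison of $\mu_{\calP_r^k}$ with $\mu_{\cL_r^{\ell}}$. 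You propose to split $\mu_L$ into an ``intrinsic part'' $\mu_{\cO}$ plus a contraction against $[L]$, transport each piece along $\iota_{\pm}$, and control the correction by a second-fundamental-form/``totally geodesic'' argument carried out in a $\sigma$-equivariant refinement of the Beilinson--Schechtman trace-complex computation. That refinement does not exist (the introduction of the paper explicitly lists a parahoric/equivariant version of \cite{beilinson.schechtman:1988} among the missing foundational ingredients), and nothing in your sketch substitutes for it, so condition \ref{vgdj-one} is not actually established.

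The point you are missing is that no transport of $\mu$ across $\iota_{\pm}$ is needed at all. By \cite[Proposition 3.6.1]{BBMP:2020} the connecting map is computed \emph{intrinsically} on each space as a cup product with an Atiyah class: $\mu_{\cL_r^{k}}=\cup\,(r+k)[\cL_r]$ on $\rkdetstilde{r}$ (since $K_{\rkdetstilde{r}/S}\cong\cL_r^{-2r}$), and $\mu_{\calP_r^{k}}=\cup\,(r+k)[\calP_r]$ on $\rkdetsanti{r}$ (since $K_{\rkdetsanti{r}/S}\cong\calP_r^{-2r}$ by Remark \ref{exprcanpf}, which in turn rests on the proposition $\iota_{\SL_r}^{\ast}K_{\rkdetstilde{r}/S}\cong K_{\rkdetsanti{r}/S}^{2}$). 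The only thing that has to be moved from the ambient space to the Prym locus is therefore the cup product with $[\cL_r]$ itself, i.e.\ the commutativity of the square
\[
  \begin{tikzcd}[column sep=5em]
    \pit_{e\ast}\Sym^2 T_{\rkdetstilde{r}/S} \arrow[r,"\pit_{e\ast}(\Sym^2 p_\sigma\circ\iota_{\pm}^{\ast})"] \arrow[d,"{\cup[\cL_r]}"'] & \pit_{e\ast}\Sym^2 T_{\rkdetsanti{r}/S} \arrow[d,"{\cup[\iota_{\pm}^{\ast}\cL_r]}"] \\
    R^1\pit_{e\ast}T_{\rkdetstilde{r}/S} \arrow[r,"R^1\pit_{e\ast}(p_\sigma\circ\iota_{\pm}^{\ast})"'] & R^1\pit_{e\ast}T_{\rkdetsanti{r}/S},
  \end{tikzcd}
\]
which holds simply because $\sigma^{\ast}\cL_r\cong\cL_r$ forces the restriction of $[\cL_r]$ to the fixed-point locus to have no component along the conormal bundle; this is pure naturality of the pairing $\Sym^2 T\otimes\Omega^1\to T$ combined with the eigenspace decomposition, and no second fundamental form or $\Sym^2 N$ bookkeeping enters. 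Chaining this square with Hitchin's identity at level $k$ (there is no need to vary the level $\ell$ and separate contributions) and with $2[\calP_r]=\iota_{\pm}^{\ast}[\cL_r]$ gives $\kappa_{\rkdetsanti{r}/S}+\mu_{\calP_r^{k}}\circ\rho=0$ with $\rho=\tfrac{2}{r+k}(\rho^{\PH}\circ\kappa_{\calCt/S})$ directly. So your three compatibilities are the right ones, but the argument you build on top of them should be replaced by the intrinsic cup-product formula for $\mu$; as written, the proof of condition \ref{vgdj-one} has a genuine gap.
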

\begin{proof}
  To verify condition \ref{vgdj-one} of Theorem \ref{vgdj}, we need to compute $\mu_{\calP^{\otimes k}} \circ \rho$; note that we have a diagram
  \[
    \begin{tikzcd}[column sep=5em]
      \pit_{e \ast} \Sym^2 T_{\rkdetstilde{r}/S} \arrow[r, "\pit_{e \ast}(\Sym^2 p_\sigma \circ \iota_{\pm}^\ast)"] \arrow[d, "{\cup[\cL_r]_{\rkdetstilde{r}/S}}"] & \pit_{e \ast} \Sym^2 T_{\rkdetsanti{r}/S} \arrow[d, "{\cup[\iota_{\pm}^\ast \cL_r]_{\rkdetsanti{r}/S}}"] \\
      R^1 \pit_{e \ast} T_{\rkdetstilde{r}/S} \arrow[r, "R^1 \pit_{e \ast}(p_\sigma \circ \iota_{\pm}^\ast)"] & R^1 \pit_{e \ast} T_{\rkdetsanti{r}/S}
    \end{tikzcd}
  \]
  which is commutative since $\sigma^\ast \cL_r \cong \cL_r$, and therefore the relative first Chern class $[\cL_r]_{\rkdetstilde{r}/S}$ 
  restricted to the anti-invariant locus has no components along the conormal bundle. It is immediate to check that the Kodaira--Spencer map of 
  $\rkdetsanti{r}/S$ factors as
  \[
    \kappa_{\rkdetsanti{r}/S} = R^1 \pit_{e \ast}(p_\sigma \circ \iota_{\pm}^\ast) \circ \kappa_{\rkdetstilde{r}/S} .
  \]
  Condition \ref{vgdj-one} is now verified by commutativity of the big triangle in the following diagram, which follows from that of the small triangle (by the original case of Hitchin's construction Theorem \ref{existenceconnection}) and of the square which we just checked:
  \[
    \begin{tikzcd}[row sep = small, column sep = 5em]
     & & & & \pit_{e \ast} \Sym^2 T_{\rkdetsanti{r}/S} \arrow[dddd, "{\begin{array}{c} 2 \mu_{\calP_r^{k}} = \\ 
      \cup 2(r+k)[\calP_r] = \\ \cup (r+k)[\iota_{\pm}^\ast \cL_r] \end{array}}" '] \\
    &  & \pit_{e \ast} \Sym^2 T_{\rkdetstilde{r}/S} \arrow[dd, "{\begin{array}{c} \mu_{\cL^{k}_r}= \\ \cup (r+k)[\cL_r] \end{array}}"] \arrow[rru, swap, sloped, pos=0.5, "\pit_{e \ast}(\Sym^2 p_\sigma \circ \iota_{\pm}^\ast)"] & & \\
      T_S \arrow[rru, sloped, swap, pos=0.7, "\frac{1}{r+k}\rho^{\Hit}\circ \kappa_{\calCt/S}"] \arrow[rrd, sloped, swap, pos=0.7, "-\kappa_{\rkdetstilde{r}/S}"] \arrow[rrrrdd, bend right=20, swap, "-\kappa_{\rkdetsanti{r}/S}"] \arrow[rrrruu, bend left=20, "\frac{1}{r+k} \rho^{\PH} \circ \kappa_{\calCt/S}"] & & & & \\
   &   & R^1 \pit_{e \ast} T_{\rkdetstilde{r}/S} \arrow[rrd, sloped, pos=0.5,  "R^1 \pit_{e \ast} (p_\sigma \circ \iota_{\pm}^\ast)"] & & \\
     & & & & R^1 \pit_{e \ast} T_{\rkdetsanti{r}/S}.
    \end{tikzcd}
  \]
  Note that we used the fact that $\mu_{\calP_r^{k}} = \cup (r+k)[\calP_r]$, which follows from Remark \ref{exprcanpf} and from
  \cite[Proposition 3.6.1]{BBMP:2020}.
  
  As for condition \ref{vgdj-two}, we know from Proposition \ref{dirim}(2) that $R^1 \pit_{e \ast} \cO_{\rkdetsanti{r}} = 0$, so that surjectivity is trivially satisfied. 

  Finally, condition \ref{vgdj-three} is a consequence of Corollary \ref{H0N}.
\end{proof}

\subsection{} As already stated above, the flatness of the Prym--Hitchin connection follows now from an analogous reasoning as before:

\begin{lemma}\label{componentsymb}
The second-order symbols of the projective heat operators which define the connections of Theorem \ref{PHconnection}, seen as functions on the cotangent bundle $T^{\vee}_{\rkdetsanti{r}/S}$, are components of the Prym--Hitchin integrable system.
\end{lemma}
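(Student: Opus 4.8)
The plan is to deduce the statement from the corresponding fact for the ordinary Hitchin system on $\rkdetstilde{r}$ (the moduli of bundles on $\calCt$), proved in \cite[\S 4]{BBMP:2020}, by transporting it across the splitting \eqref{split}. Recall that under the standard identification of $\pit_{e\ast}\Sym^2 T_{\rkdetstilde{r}/S}$ with the sheaf of functions on the relative cotangent bundle $T^\ast_{\rkdetstilde{r}/S}$ that are fibrewise homogeneous of degree two, the Hitchin symbol $\rho^{\Hit}$ sends a class $\xi\in R^1\pit_{s\ast}T_{\calCt/S}$ to the function $(E,\Phi)\mapsto\langle\tr(\Phi^2),\xi\rangle$, i.e.\ the Serre pairing on $\calCt$ of the quadratic Hitchin Hamiltonian $\tr(\Phi^2)\in H^0(\calCt,K_{\calCt}^2)$ against $\xi$; in particular each $\rho^{\Hit}(\xi)$ is, up to a universal constant, a component of the Hitchin integrable system on $\calCt$. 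It will then suffice to understand the effect of the transfer map $\pit_{e\ast}(\Sym^2 p_\sigma\circ\iota_\pm^\ast)$ on such functions.

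The second step is a purely linear-algebraic observation: on fibrewise-quadratic functions, $\Sym^2 p_\sigma\circ\iota_\pm^\ast$ is restriction along a closed immersion of cotangent bundles. Dualising \eqref{split} gives $\iota_\pm^\ast\Omega^1_{\rkdetstilde{r}/S}\cong\Omega^1_{\rkdetsanti{r}/S}\oplus N^\ast$, with $N$ the relative normal bundle, so the total space of $T^\ast_{\rkdetsanti{r}/S}$ is naturally the closed subvariety of $\iota_\pm^\ast T^\ast_{\rkdetstilde{r}/S}$ consisting of the relative covectors annihilating $N$. Decomposing a fibrewise-quadratic function $q$ as $q_{++}+q_{+-}+q_{--}$ according to $\iota_\pm^\ast T_{\rkdetstilde{r}/S}=T_{\rkdetsanti{r}/S}\oplus N$, one has $\Sym^2 p_\sigma(q)=q_{++}$, whereas $q_{+-}$ and $q_{--}$ vanish identically on covectors killing $N$; hence $\Sym^2 p_\sigma(q)$ equals the restriction of $q$ to $T^\ast_{\rkdetsanti{r}/S}$. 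Composing with the base-change map $\iota_\pm^\ast T^\ast_{\rkdetstilde{r}/S}\to T^\ast_{\rkdetstilde{r}/S}$, this shows that $\pit_{e\ast}(\Sym^2 p_\sigma\circ\iota_\pm^\ast)$ carries a fibrewise-quadratic function on $T^\ast_{\rkdetstilde{r}/S}$ to its restriction to the subvariety $T^\ast_{\rkdetsanti{r}/S}$.

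Next I would identify this subvariety. Since $\sigma$ acts as $+1$ on $T_{\rkdetsanti{r}/S}$ and as $-1$ on $N$, the relative covectors annihilating $N$ are precisely the $\sigma$-invariant ones, which over a stable anti-invariant bundle $E$ are exactly the Higgs fields $\Phi\in H^0(\calCt,\End^0(E)\otimes K_{\calCt})$ with $\sigma^\ast\Phi=\Phi^t$. Thus $T^\ast_{\rkdetsanti{r}/S}$ is (an étale double cover of, when $r$ is even) Zelaci's locus $T^\ast\mathcal{N}$ of anti-invariant Higgs bundles inside $T^\ast\rkdetstilde{r}$, and $\overline{h}_0$ is by construction the restriction of the ordinary Hitchin map $h_0$ to it. Combining this with the previous step, the image of $\rho^{\PH}=\pit_{e\ast}(\Sym^2 p_\sigma\circ\iota_\pm^\ast)\circ\rho^{\Hit}$, viewed via the identification $\pit_{e\ast}\Sym^2 T_{\rkdetsanti{r}/S}\hookrightarrow\mathcal{O}_{T^\ast_{\rkdetsanti{r}/S}}$, consists of the pullbacks to $T^\ast_{\rkdetsanti{r}/S}$ of the restrictions to $T^\ast\mathcal{N}$ of the quadratic Hamiltonians $\tr(\Phi^2)$, that is, of the components of the quadratic part of $\overline{h}_0$. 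Finally, the symbols of the heat operators of Theorem \ref{PHconnection} are $\rho=\tfrac{2}{r+k}\,\rho^{\PH}\circ\kappa_{\calCt/S}$; precomposing with the Kodaira--Spencer map $\kappa_{\calCt/S}$ and rescaling by $\tfrac{2}{r+k}$ (the $2$ reflecting $\mathcal{P}_r^{\otimes 2}\cong\iota_\pm^\ast\mathcal{L}_r$, cf.\ Remark \ref{exprcanpf}) changes neither property, which would conclude the argument.

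The hard part is not conceptual but organisational: keeping the several cotangent-bundle identifications straight in the second step — matching the algebraic operation $\Sym^2 p_\sigma\circ\iota_\pm^\ast$ with geometric restriction of Hamiltonians along $T^\ast\mathcal{N}\hookrightarrow T^\ast\rkdetstilde{r}$ — and being precise that Zelaci's definition of $\overline{h}_0$ genuinely agrees with the restriction of $h_0$, including the descent identification $H^0(\calCt,K_{\calCt}^2)^\sigma\cong H^0(C,(K_C\otimes\Delta)^2)$ on the base. Once these are pinned down, the lemma follows at once from the classical case.
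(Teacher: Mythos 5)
Your argument is correct and is essentially the paper's own proof: both rest on Zelaci's identification of the quadratic part of the Prym--Hitchin system as the restriction of the ordinary Hitchin Hamiltonians along the inclusion $j\colon T^\ast_{\rkdetsanti{r}/S}\hookrightarrow \iota_{\pm}^\ast T^\ast_{\rkdetstilde{r}/S}$, combined with the observation that $\Sym^2 p_\sigma\circ\iota_{\pm}^\ast$ is dual to $j$ via the splitting \eqref{split}. The only (harmless) divergence is that the paper additionally verifies the identity $p_+^\ast\circ\kappa_{\calC/S}=\kappa_{\calCt/S}$ to match the two base spaces, which your phrasing sidesteps by noting that the entire image of $\rho^{\PH}$ already consists of components of $\overline{h}_0$, since $\tr(\Phi^2)$ is $\sigma$-invariant on anti-invariant Higgs fields.
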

\begin{proof}
  We recall from \cite[Proposition 4.4]{zelaci:2022} that the quadratic part of the 
  Prym--Hitchin integrable system is given by the composition
  \[ \begin{tikzcd}
  T^{\vee}_{\rkdetsanti{r}/S} \ar[r,"j"]& \iota_{\pm}^{\ast}T^{\vee}_{\rkdetstilde{r}/S}
  \ar[r] & \pit_{s \ast} K_{\calCt/S}^{ 2}
  \ar[r,"p_+"]& (\pit_{s \ast} K_{\calCt/S}^{ 2})_+ \cong \pi_{s \ast} K_{\calC/S}^{2},\end{tikzcd}
  \]
  where the middle arrow is the quadratic part of the original Hitchin system. As in the discussion preceding Definition \ref{hitchinsymbol}, factoring through symmetric squares and dualizing, in order to show the claim we need to verify that
  \[
    \Sym^2 j^\ast = \Sym^2 (p_\sigma \circ \iota_{\pm}) ,
    \qquad \textrm{ and } \qquad
    p_+^\ast \circ \kappa_{\calC/S} = \kappa_{\calCt/S} .
  \]
  The first of these conditions is again clear in view of the splitting of the restriction of the relative tangent and cotangent bundle. The second follows since we can calculate the Kodaira--Spencer map of the family $\calCt \to S$ in the \v{C}ech formalism from that of $\calC \to S$ by simply lifting vector fields on $\calC$ to $\sigma$-invariant vector fields on $\calCt$.
\end{proof}

\begin{theorem}\label{PHFlat} The Prym--Hitchin connection is projectively flat.
\end{theorem}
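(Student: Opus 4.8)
The plan is to deduce projective flatness from Theorem \ref{flatness}, applied to the line bundle $\calP_r^{\otimes k}$ and the family $\pit_e \colon \rkdetsanti{r} \to S$; the whole argument runs in parallel with the fixed-determinant case \cite[Theorem 4.9]{hitchin:1990}, \cite[Theorem 4.8.2]{BBMP:2020}, with Zelaci's Prym--Hitchin system playing the role of the usual Hitchin system. The hypotheses of Theorem \ref{vgdj} required as input for Theorem \ref{flatness} have already been verified in the course of proving Theorem \ref{PHconnection}, so it remains to check the three conditions \ref{flatness-one}, \ref{flatness-two} and \ref{flatness-three}.

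First I would dispose of the two formal conditions. Condition \ref{flatness-three} asks that $\pit_{e \ast} T_{\rkdetsanti{r}/S} = 0$, which is precisely Proposition \ref{dirim}(1). For condition \ref{flatness-two} I would recall, as in the proof of Theorem \ref{PHconnection}, that $\mu_{\calP_r^{k}}$ is cup product with $(r+k)[\calP_r]$ (via Remark \ref{exprcanpf} and \cite[Proposition 3.6.1]{BBMP:2020}), with $r+k \neq 0$; fibrewise over $S$ this is the map $\cup [\calP_r] \colon H^0(\Sym^2 T_{\rkdetsanti{r}}) \to H^1(T_{\rkdetsanti{r}})$, which is injective by the case $m = 2$ of Proposition \ref{injcupproductPetalecover}. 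Since the direct images in question are locally free under our running hypothesis, fibrewise injectivity gives injectivity of $\mu_{\calP_r^{k}}$ as a morphism of sheaves, and condition \ref{flatness-two} follows.

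The substance is condition \ref{flatness-one}, the vanishing of the Poisson brackets $\{\rho(\theta), \rho(\theta')\}_{T^\ast_{\rkdetsanti{r}/S}}$. Here I would invoke Lemma \ref{componentsymb}: up to the nonzero scalar $\tfrac{2}{r+k}$, which does not affect Poisson brackets, the second-order symbols $\rho(\theta)$, seen as functions on $T^\ast_{\rkdetsanti{r}/S}$, are the quadratic Hamiltonians of Zelaci's Prym--Hitchin system, hence in particular each is of the form $g_\theta \circ \overline{h}_0$ for an affine-linear $g_\theta$ on the base $W_0$ of the Prym--Hitchin map $\overline{h}_0$. By \cite[Theorem 4.8]{zelaci:2022} the general fibres of $\overline{h}_0$ are isotropic for the (relative) canonical symplectic form on $T^\ast_{\rkdetsanti{r}/S}$, and they have dimension $\dim W_0 = \dim_S \rkdetsanti{r}$, which is half the relative dimension of $T^\ast_{\rkdetsanti{r}/S} \to S$, so they are Lagrangian. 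Two functions pulled back along a map with Lagrangian generic fibres Poisson-commute, since their Hamiltonian vector fields are then tangent to the fibres; thus $\{\rho(\theta), \rho(\theta')\}_{T^\ast_{\rkdetsanti{r}/S}} = 0$ for all $\theta, \theta' \in T_S$, and Theorem \ref{flatness} gives the projective flatness of the Prym--Hitchin connection (for the remaining $-$ component, if it exists, flatness follows since it is isomorphic to the $+$ component).

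The hard part is really only condition \ref{flatness-one}, and within it the delicate point is that Lemma \ref{componentsymb} identifies the symbol with part of an \emph{integrable} system for the correct --- relative canonical --- symplectic structure on $T^\ast_{\rkdetsanti{r}/S}$. Restriction of Poisson-commuting Hamiltonians to a symplectic submanifold (here the anti-invariant locus inside $T^\ast_{\rkdetstilde{r}/S}$) need not preserve commutativity; what saves us is precisely that Zelaci constructs the Prym--Hitchin map $\overline{h}_0$ as an integrable system downstairs. Once that is in hand, together with the isotropy statement, the remainder of the proof is bookkeeping carried over verbatim from the classical Hitchin setting.
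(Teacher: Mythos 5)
Your proposal is correct and follows the paper's own proof essentially verbatim: condition (1) of Theorem \ref{flatness} via Lemma \ref{componentsymb}, condition (2) via Proposition \ref{injcupproductPetalecover} with $m=2$ together with $\mu_{\calP_r^k}=\cup(r+k)[\calP_r]$, and condition (3) via Proposition \ref{dirim}(1). The only difference is that you spell out the (standard, and correct) reason why being a component of the Prym--Hitchin integrable system --- i.e.\ a pullback along $\overline{h}_0$, whose generic fibres are Lagrangian by \cite[Theorem 4.8]{zelaci:2022} --- forces the Poisson brackets to vanish, a point the paper leaves implicit.
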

\begin{proof}  Lemma \ref{componentsymb} verifies condition (\ref{flatness-one}) of Theorem \ref{flatness}. Proposition
\ref{injcupproductPetalecover} for $m=2$ verifies condition (\ref{flatness-two}) since $\mu_{\mathcal{P}_r^k} = 
\cup (r+k)[\mathcal{P}_r]$. Proposition \ref{dirim}(1) verifies condition (\ref{flatness-three}). 
\end{proof}

 \section{A Laszlo theorem for anti-invariant bundles}\label{twistedlaszlo}
\subsection{} A key ingredient in the standard theory of non-abelian theta function is the correspondence with spaces of conformal blocks \cite{beauville.laszlo:1994}.  These form natural vector bundles on the moduli spaces of pointed projective curves $\mathcal{M}_{g,n}$ (and in fact also their Deligne-Mumford compactification $\overline{\mathcal{M}}_{g,n}$ obtained by adding stable curves, though we will not make use of this).  There exists a twisted $\mathcal{D}$-module structure (or, in the terminology of \cite{looijenga:2013}, a $\lambda$-flat connection), known as the WZW or TUY connection, on these as well \cite{TUY:1989, tsuchimoto:1993}. Each  twisted $\mathcal{D}$-module induces a flat projective connection, and it was shown by Laszlo that the natural isomorphism between two projective bundles of non-abelian theta functions and conformal blocks (or rather the descent of the latter to $\mathcal{M}_g$) is flat with respect to the Hitchin and WZW connections (\cite{laszlo:1998}, see also \cite[\S 5.6]{ueno:2008}).  Another approach to this, based on localisation of vertex algebras, was given by Ben-Zvi and Frenkel in \cite{ben-zvi.frenkel:2004}.  More recently, Laszlo's Theorem was also extended to the case of parabolic bundles in \cite{biswas2023geometrization}.

The counterpart to non-abelian theta functions on moduli spaces of torsors for parahoric Bruhat-Tits group schemes is given by the spaces of so-called \emph{twisted} conformal blocks.  In this set-up one considers a finite group $\Gamma$ which acts on a curve $\calCt$, as well as on a semi-simple Lie algebra $\mathfrak{g}$.  In the case where $\calCt$ is smooth (and varies in a family parametrised by $S$), a connection on the corresponding vector bundle was constructed by Szcesny \cite{szcesny:2006}, in the framework of vertex algebras.  Various generalizations of this were since given by Damiolini \cite{damiolini:2020}, Hong and Kumar \cite{hong.kumar:2018}, and Deshpande and Mukhopadhyay \cite{deshpande.mukhopadhyay:2023}.

Very relevant for us, Hong and Kumar also established the isomorphism between non-abelian theta functions and twisted conformal blocks \cite[Theorem 12.1]{hong.kumar:2018}.  In the case where $\mathfrak{g}=\mathfrak{sl}_r$, $\Gamma=\mathbb{Z}/2\mathbb{Z}$ and $\calCt\rightarrow \calC$ is ramified, this was also shown by Zelaci \cite[Theorem 5.4]{zelaci:twistconf}.

In this section we will show that, in the case where $\mathfrak{g}=\mathfrak{sl}_r$, $\Gamma=\mathbb{Z}/2\mathbb{Z}$ and $\calCt$ is as in Section~\ref{higherpryms}, the isomorphism of Hong and Kumar respects the twisted WZW connection and the Prym-Hitchin connection.  This is of interest in its own right, but will also help us in Section \ref{generalflat} to use conformal embeddings (see Appendix \ref{appendixtwistedconf}), which give rise to flat maps between bundles of twisted conformal blocks (note that even in the non-twisted case the corresponding flatness property for maps between bundles of non-abelian theta functions is proven in \cite{belkale:2009} using a necessary excursion to conformal blocks).

Our strategy will broadly follow \cite[Section 8]{laszlo:1998}, and more particularly the approach to Laszlo's theorem given in \cite[\S 5.6]{ueno:2008} (Laszlo's original proof characterised the Hitchin connection infinitesimally, as was done in \cite{hitchin:1990}, rather than locally via Theorem \ref{vgdj}), see also \cite{biswas2023geometrization} for the equivalent story in the parabolic case.  Essentially, we will show that the construction of the twisted WZW connection can be recast to construct a projective heat operator with the same symbol that determines the Prym-Hitchin connection.  We will follow Damiolini's construction of the relevant twisted WZW connection \cite[\S 4]{damiolini:2020}, a summary of which is given in Appendix \ref{twistedWZWoverview}.

\subsection{The WZW connection as a heat operator} 
Following the notation from Appendix \ref{appendixtwistedconf},  we will denote the parahoric Bruhat-Tits group schemes on $X$ obtained by invariant Weil restriction by $\mathcal{SL}_r^{\pm}$ (using the involutions (\ref{syminvol}) and (\ref{altinvol}) respectively)
and the stack of $\mathcal{SL}_r^{\pm}$-torsors on $X$ as $\mathcal{B}un_{\mathcal{SL}_r^{\pm}}$. As was discussed in Section \ref{BTtorsor}, we have an equivalence of stacks $\mathcal{B}un_{\mathcal{SL}_r^{\pm}}\cong \mathfrak{N}_{SL_r}^{\pm}$.  The stable loci in these stacks are denoted as $\mathcal{B}un_{\mathcal{SL}_r^{\pm}}^s$ and $\mathfrak{N}^{\pm,s}_{SL_r}$. By Heinloth's uniformization theorem \cite{heinloth:2010} (conjectured by Pappas-Rapoport \cite{pappas.rapoport:2010}),  this can be expressed as a quotient $$\mathcal{B}un_{\mathcal{SL}_r^{\pm}}\cong \mathfrak{Q}_{\mathcal{SL}_r^{\pm}}\Big/ \mathcal{SL}_r^{\pm}(\calC^{\degree}),$$
where $\mathfrak{Q}_{\mathcal{SL}_r^{\pm}}$ is the twisted affine Grassmannian and $\calC^{\degree}$ is the punctured curve.  We will denote by $\mathfrak{Q}^s_{\mathcal{SL}_r^{\pm}}$ the locus that descends to $\mathcal{B}un_{\mathcal{SL}_r^{\pm}}^s$.  By the affine version of the Borel-Weil theorem \cite{mathieu:1986,kumar:1987}, we have that $$\mathcal{H}^{\mathcal{SL}_r^{\pm}}_{\ell}\cong \pi^{\mathfrak{Q}_{\mathcal{SL}_r^{\pm}}}_*(L^{\otimes \ell})=\pi^{\mathfrak{Q}^s_{\mathcal{SL}_r^{\pm}}}_*(L^{\otimes \ell}),$$ where $\mathcal{H}^{\mathcal{SL}_r^{\pm}}_{\ell}$ is the irreducible quotient of the Verma module as in \ref{twistconfoverview}(\ref{vermquot}), and $L$ is the positive generator of the Picard group of $\mathfrak{Q}_{\mathcal{SL}_r^{\pm}}$.
Similar to \cite[5.2.12]{beilinson-drinfeld:1991} or \cite[(8.6)]{laszlo:1998}, the algebra action of $\overline{U}\widehat{\mathfrak{h}}^{\mathcal{SL}_r^{\pm}}_{\mathbb{D}^{\degree}}$ on $\mathcal{H}^{\mathcal{SL}_r^{\pm}}_{\ell}$ corresponds to a morphism $$\begin{tikzcd}\left(\overline{U}\widehat{\mathfrak{h}}^{\mathcal{SL}_r^{\pm}}_{\mathbb{D}^{\degree}}\right)^{\operatorname{opp}}\ar[r]&\pi_{*}^{\mathfrak{Q}_{\mathcal{SL}_r^{\pm}}}\left(\mathcal{D}_{\mathfrak{Q}/S}(L)\right),\end{tikzcd}$$ where $\operatorname{opp}$ denotes the opposite algebra, and $\mathcal{D}_{\mathfrak{Q}/S}(L)$ refers to the sheaf of relative differential operators on $L$. (This can be understood as an instance of Beilinson-Bernstein localization.)  This morphism now descends  to a morphism
$$\begin{tikzcd}\left(\overline{U}\widehat{\mathfrak{h}}^{\mathcal{SL}_r}_{\mathbb{D}^{\degree}}\right)^{\operatorname{opp}}\ar[r]& \pi_{*}^{\rkdetsanti{r}}\left(\mathcal{D}_{\rkdetsanti{r}/S}(\LL_r)\right).\end{tikzcd}$$ (As in \cite[(8.6)]{laszlo:1998}, this is done, strictly speaking, using a quasi-section of \break $\mathcal{B}un_{\mathcal{SL}_r^{\pm}}^s\rightarrow \rkdetsanti{r}$ defined using an \'etale surjective morphism to $\cN_{\SL_r}^{\pm,s}$.  Such a quasi-section is guaranteed to exist by \cite{heinloth:2010}.) Together with steps (\ref{twistedsegsug}), (\ref{segsugdescent}), (\ref{segsugproj}), and (\ref{finalquot}) from Appendix \ref{twistedWZWoverview} (using the fact that the Segal-Sugawara construction is quadratic), we get a morphism
$$\begin{tikzcd}D^{\scaleto{WZW}{2.5pt}}:T_S\ar[r] & \mathcal{W}_{\rkdetsanti{r}/S}(\LL_r)\Big/\mathcal{O}_S.\end{tikzcd}$$
We can summarise this discussion as
\begin{proposition} 
The map $D^{\scaleto{WZW}{2.5pt}}$ defines a projective heat operator on $L$.
\end{proposition}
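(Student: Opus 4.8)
The plan is to check directly that $D^{\scaleto{WZW}{2.5pt}}$ meets the definition of a projective heat operator recalled in Section~\ref{backgroundconnections}: regarded as an $\mathcal{O}_S$-linear map $T_S\to \pi_\ast\bigl(\mathcal{W}_{\rkdetsanti{r}/S}(\LL_r)\big/\mathcal{O}_S\bigr)$, it must be a splitting of the symbol map $\sigma_S$, i.e.\ $\sigma_S\circ D^{\scaleto{WZW}{2.5pt}}=\mathrm{id}_{T_S}$. So the verification has three parts. First I would record that the image really lands in $\mathcal{W}_{\rkdetsanti{r}/S}(\LL_r)$ and not merely in $\difftwo_{\rkdetsanti{r}}(\LL_r)$: this is immediate from the construction, since the operators produced by the twisted Segal--Sugawara assignment (Appendix~\ref{twistedWZWoverview}, step~(\ref{twistedsegsug})) are quadratic --- hence of order at most two, with well-defined second symbol in $\Sym^2 T_{\rkdetsanti{r}/S}$ --- and $\pi^{-1}\mathcal{O}_S$-linear; combined with the $\mathcal{O}_S$-linearity of $\theta\mapsto D^{\scaleto{WZW}{2.5pt}}(\theta)$ this is exactly the membership condition for $\mathcal{W}$.

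Next I would check that $D^{\scaleto{WZW}{2.5pt}}$ is well defined, i.e.\ independent of the quasi-section used to descend the construction. Here I would follow the template of Laszlo's argument \cite[\S 8]{laszlo:1998} (see also \cite[\S 5.6]{ueno:2008}, and \cite{biswas2023geometrization} for the parabolic analogue), now in Damiolini's twisted framework \cite[\S 4]{damiolini:2020}. The morphism $\bigl(\overline{U}\widehat{\mathfrak{h}}^{\mathcal{SL}_r^{\pm}}_{\mathbb{D}^{\degree}}\bigr)^{\operatorname{opp}}\to \pi^{\mathfrak{Q}_{\mathcal{SL}_r^{\pm}}}_*\bigl(\mathcal{D}_{\mathfrak{Q}/S}(L)\bigr)$ on the twisted affine Grassmannian is canonical, being an instance of Beilinson--Bernstein localisation through the affine Borel--Weil realisation; a choice enters only when passing down to $\rkdetsanti{r}$ along a quasi-section of $\mathcal{B}un_{\mathcal{SL}_r^{\pm}}^s\to \rkdetsanti{r}$. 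Any two such quasi-sections differ, locally on $S$, by the action of $\mathcal{SL}_r^{\pm}(\calC^{\degree})$, and under this action the twisted Segal--Sugawara operators transform by a gauge-generated operator plus a scalar; so after quotienting out the gauge directions (which become zero on $\rkdetsanti{r}$) and the centre, the resulting local section of $\mathcal{W}_{\rkdetsanti{r}/S}(\LL_r)\big/\mathcal{O}_S$ is independent of the quasi-section. This is precisely the mechanism that makes Damiolini's twisted WZW connection well defined, so I expect no genuinely new input at this step.

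The real content is the third point, $\sigma_S\circ D^{\scaleto{WZW}{2.5pt}}=\mathrm{id}_{T_S}$. By construction (Appendix~\ref{twistedWZWoverview}, steps~(\ref{twistedsegsug})--(\ref{finalquot})), $D^{\scaleto{WZW}{2.5pt}}(\theta)$ for a local vector field $\theta$ on $S$ is assembled by lifting $\theta$, through the Virasoro (Kodaira--Spencer) uniformisation of the family $\calCt\to S$, to a vector field on the punctured formal disc, inserting it into the twisted Segal--Sugawara map to obtain an element of $\overline{U}\widehat{\mathfrak{h}}^{\mathcal{SL}_r^{\pm}}_{\mathbb{D}^{\degree}}$, then pushing this element to $\pi^{\rkdetsanti{r}}_*\bigl(\mathcal{D}_{\rkdetsanti{r}/S}(\LL_r)\bigr)$ and reducing modulo $\mathcal{O}_S$. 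What I need to show is that the $\pi^\ast T_S$-component of the symbol of the resulting differential operator is $\theta$ itself. I would derive this from the compatibility of the Virasoro uniformisation with the projection $\mathcal{A}(\LL_r)\to T_S$: because the action of $\overline{U}\widehat{\mathfrak{h}}^{\mathcal{SL}_r^{\pm}}_{\mathbb{D}^{\degree}}$ on the total space $\mathfrak{Q}_{\mathcal{SL}_r^{\pm}}$ covers its action on the base $S$, the component of $D^{\scaleto{WZW}{2.5pt}}(\theta)$ transverse to the fibres of $\pit_e$ is tautologically the vector field one started with. Concretely, I would trace the $\pi^\ast T_S$-symbol through the four steps of Appendix~\ref{twistedWZWoverview} exactly as in \cite[\S 8]{laszlo:1998} or \cite[\S 5.6]{ueno:2008}, the only change being that the untwisted current-algebra data is replaced by the twisted data of \cite{damiolini:2020}.

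The hard part will be the bookkeeping rather than any single computation: I must make sure that all of the twisted ingredients --- the twisted affine Grassmannian $\mathfrak{Q}_{\mathcal{SL}_r^{\pm}}$, the twisted current algebra $\overline{U}\widehat{\mathfrak{h}}^{\mathcal{SL}_r^{\pm}}_{\mathbb{D}^{\degree}}$, and the twisted Segal--Sugawara operators with their central charge --- interlock so that the descent step and the symbol computation run word for word as in the untwisted case. Damiolini's construction, recalled in Appendix~\ref{twistedWZWoverview}, is designed so that they do, and once that is granted the projective-heat-operator property follows formally.
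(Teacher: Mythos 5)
Your proposal is correct and follows essentially the same route as the paper, which in fact gives no separate proof of this proposition but treats it as a summary of the preceding construction (Borel--Weil realisation, Beilinson--Bernstein localisation, descent along a quasi-section as in Laszlo, and the quadratic twisted Segal--Sugawara operators from Appendix~\ref{twistedWZWoverview}); your three checks simply make explicit what that discussion leaves implicit. One small caution: the full operator $D^{\scaleto{WZW}{2.5pt}}(\theta)$ is not $\pi^{-1}\mathcal{O}_S$-linear (only its vertical second-order part is), since otherwise its $T_S$-symbol would vanish and it could not split $\sigma_S$; this is a phrasing slip rather than a gap in the argument.
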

We now have
\begin{proposition}\label{symb-twisted-conf}
The symbol of the heat operator $D^{\scaleto{WZW}{2.5pt}}$ is given by $\rho$ from Theorem \ref{PHconnection}.
\end{proposition}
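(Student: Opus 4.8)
The plan is to compute the symbol of $D^{\scaleto{WZW}{2.5pt}}$ directly from the twisted Segal--Sugawara construction and to recognise it as the Prym--Hitchin symbol, following the proof of Laszlo's theorem as presented in \cite[\S 8]{laszlo:1998} and \cite[\S 5.6]{ueno:2008}. Recall that the symbol of a projective heat operator is its second-order part $\pit_{e\ast}(\sigma_2)\circ D$. By construction $D^{\scaleto{WZW}{2.5pt}}$ is assembled, through steps (\ref{twistedsegsug})--(\ref{finalquot}), out of the twisted Segal--Sugawara tensor, which is \emph{quadratic} in the currents; this is exactly what forces the resulting operators to be of order two (so that $D^{\scaleto{WZW}{2.5pt}}$ indeed lands in $\mathcal{W}_{\rkdetsanti{r}/S}(\mathcal{P}_r^{\otimes k})/\cO_S$, as in the preceding Proposition) and ensures that the relevant part of its symbol is the degree-two one. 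So the first step is to write down, on the twisted affine Grassmannian $\mathfrak{Q}^s_{\mathcal{SL}_r^{\pm}}$, the symbol of the second-order operator on $L^{\otimes\ell}$ produced by the Sugawara element: the currents act on $L^{\otimes\ell}$ with first-order symbol the fundamental vector fields of the $\mathcal{SL}_r^{\pm}(\calC^{\degree})$-action, so the Sugawara operator has degree-two symbol equal to a fixed scalar --- the twisted Sugawara normalisation, a rational function of $\ell$ and of the (twisted) dual Coxeter number of $\mathfrak{sl}_r$ for the involution (\ref{syminvol})/(\ref{altinvol}) --- times the symmetric square of these vector fields, contracted through the normalised invariant form.

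Next I would push this down along the uniformisation $\mathcal{B}un^s_{\mathcal{SL}_r^{\pm}}\cong\mathfrak{Q}^s_{\mathcal{SL}_r^{\pm}}\big/\mathcal{SL}_r^{\pm}(\calC^{\degree})$ and along the quasi-section to $\rkdetsanti{r}$ of \cite{heinloth:2010}, exactly as in \cite[(8.6)]{laszlo:1998}: the fundamental vector fields descend to $T_{\rkdetsanti{r}/S}$, so the symbol becomes a well-defined section of $\Sym^2 T_{\rkdetsanti{r}/S}$, the genuine second-order ambiguity being absorbed into the $\cO_S$-quotient by steps (\ref{segsugdescent})--(\ref{finalquot}). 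The key step is then the local-to-global comparison: under the residue/uniformisation identification of $T_{[E]}\rkdetsanti{r}$ with $H^1(C,\mathcal{F})$ --- where $\mathcal{F}$ is the descent to $\calC$ of $\End^0(\cE)$ constructed in Section \ref{higherpryms} --- this descended degree-two symbol, read off the Sugawara element, is dual to the quadratic map $\Sym^2 H^0(C,\mathcal{F}\otimes K_C)\to H^0(C,K_C^{2})$ given by the trace form; but by \cite[Proposition 4.4]{zelaci:2022}, recalled in Lemma \ref{componentsymb}, that quadratic map is precisely the quadratic part of the Prym--Hitchin integrable system. Combined with the \v{C}ech computation of $\kappa_{\calCt/S}$ by lifting vector fields on $\calC$ to $\sigma$-invariant vector fields on $\calCt$ (again as in the proof of Lemma \ref{componentsymb}), this shows that the symbol of $D^{\scaleto{WZW}{2.5pt}}$ is a scalar multiple of $\rho^{\PH}\circ\kappa_{\calCt/S}$. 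The step I expect to require the most care --- and the genuinely new point relative to the untwisted case --- is pinning down that scalar and checking it equals $\tfrac{2}{r+k}$: this means matching the twisted Sugawara normalisation with the facts that the positive generator $L$ of $\Pic(\mathfrak{Q}_{\mathcal{SL}_r^{\pm}})$ descends to the Pfaffian bundle $\mathcal{P}_r$ and that $\iota_{\pm}^{\ast}\cL_r\cong\mathcal{P}_r^{2}$ (the ``extra'' $2$ relative to the classical $\tfrac{1}{r+k}$ being the same one forced on the Hitchin side through $\mu_{\mathcal{P}_r^{k}}=\cup(r+k)[\mathcal{P}_r]$ and Remark \ref{exprcanpf}), while simultaneously verifying that the twisted grading and the conformal anomaly contribute only to the $\cO_S$-quotient, not to the $\Sym^2 T_{\rkdetsanti{r}/S}$-valued component of the symbol; this is exactly where the summary of Damiolini's construction in Appendix \ref{appendixtwistedconf} is needed.

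Finally, it is worth noting a much quicker route. Since $D^{\scaleto{WZW}{2.5pt}}$ is a projective heat operator on $\mathcal{P}_r^{\otimes k}$, its symbol $\rho^{\scaleto{WZW}{2.5pt}}$ necessarily satisfies condition \ref{vgdj-one} of Theorem \ref{vgdj}, namely $\kappa_{\rkdetsanti{r}/S}+\mu_{\mathcal{P}_r^{k}}\circ\rho^{\scaleto{WZW}{2.5pt}}=0$ (this is part of the van Geemen--de Jong analysis \cite[\S 2.3.7]{vangeemen.dejong:1998}, \cite[\S 3.4]{BBMP:2020}, and the distinction between projective and honest heat operators is immaterial here because $R^1\pit_{e\ast}\cO_{\rkdetsanti{r}}=0$ by Proposition \ref{dirim}). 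The symbol $\rho$ of Theorem \ref{PHconnection} satisfies the same equation, and since $\mu_{\mathcal{P}_r^{k}}=\cup(r+k)[\mathcal{P}_r]$ is injective by Proposition \ref{injcupproductPetalecover} (case $m=2$, with $r+k$ invertible), that equation has a unique solution; hence $\rho^{\scaleto{WZW}{2.5pt}}=\rho$. On this route the only delicate point left is independence of the chosen quasi-section, which is automatic once one knows $\pit_{e\ast}\cO_{\rkdetsanti{r}}=\cO_S$ (Corollary \ref{H0N}), so that a projective heat operator is determined by its symbol. Either way, once the symbols coincide the Laszlo-type identification of the two connections (and their flatness) follows from the uniqueness part of Theorem \ref{vgdj}.
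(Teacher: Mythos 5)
Your proposal is essentially correct, and the interesting part is that it contains two routes, of which the second genuinely differs from the paper. Your first two paragraphs outline the strategy the paper actually follows --- compute the degree-two symbol of the Sugawara operator, descend it along the uniformization and a quasi-section, and match it against the Prym--Hitchin symbol --- but they remain a plan: the paper's proof consists precisely in carrying out the residue computation you defer, namely evaluating $\langle\overline{\phi}\otimes\overline{\phi}\,|\,T_{\mathfrak{h}}(D_j)\rangle$ against the basis (\ref{twistedbasis}) on the one hand, and $\rho^{\Hit}(t,t)(\phi\otimes\phi)$ in \v{C}ech terms on the twice-punctured cover $\calCt^{\degree\degree}$ on the other, both yielding $-\delta_{2n+j,-2}$ after dividing by $2(c_{\mathfrak{sl}_r}+h^{\vee}_{\mathfrak{sl}_r})$ and $2(k+r)$ respectively; so as written those paragraphs do not by themselves establish the normalisation $\tfrac{2}{r+k}$. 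Your ``quicker route,'' however, is a complete and valid argument that the paper does not use: condition \ref{vgdj-one} of Theorem \ref{vgdj} is indeed \emph{necessary} for the symbol of any projective heat operator on $\calP_r^{\otimes k}$ (the obstruction to lifting $(\theta,\rho_D(\theta))$ along $\mathcal{W}_{\rkdetsanti{r}/S}(\calP_r^{\otimes k})/\cO \to \pi^*T_S\oplus\Sym^2T_{\rkdetsanti{r}/S}$ vanishes whenever a lift $D(\theta)$ exists, and that obstruction is exactly $\kappa_{\rkdetsanti{r}/S}+\mu_{\calP_r^k}\circ\rho_D$), and injectivity of $\mu_{\calP_r^k}=\cup(r+k)[\calP_r]$ from Proposition \ref{injcupproductPetalecover} with $m=2$ then forces $\rho^{\scaleto{WZW}{2.5pt}}=\rho$. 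What this buys is considerable: it shows that \emph{every} projective heat operator on $\calP_r^{\otimes k}$ has symbol $\rho$, so Theorem \ref{equivlaszlo} follows with no Sugawara computation at all. What it costs is that the entire burden shifts onto the preceding (essentially unproved in the paper) Proposition that $D^{\scaleto{WZW}{2.5pt}}$ really is a projective heat operator on $\calP_r^{\otimes k}$ --- in particular that the level-$\ell$ module corresponds to the $k$-th power of the Pfaffian bundle and that the second-order part is vertical; the paper's explicit computation serves as an independent check of exactly these normalisations, which your abstract argument takes on faith. Your closing remark about the quasi-section is slightly misplaced: well-definedness of $D^{\scaleto{WZW}{2.5pt}}$ belongs to the construction of the operator, not to the identification of its symbol.
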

\begin{proof} The proof will follow a similar approach as \cite[\S 8.10-8.14]{laszlo:1998} (see also \cite[p. 128]{ueno:2008}, \cite[\S 6.3-6.4]{bmw:2024}): we will calculate the symbol of the heat operator associated with the twisted WZW-connection (as outlined above and in Appendix \ref{appendixtwistedconf}), and then express the symbol of the Prym-Hitchin heat operator in similar terms, so that they can be compared.  

The former is expressed using the curve $\calC$, and the calculation uses the explicit expression for the Segal-Sugawara construction.  The latter is defined using the double cover $\calCt$, and in order to compare its symbol we calculate it in \v{C}ech cohomology using the formal cover given by the twice punctured curve $\calCt^{\degree\degree}$, and two formal neighbourhoods of the punctures that are interchanged by the involution.

We begin by looking at the twisted WZW connection.  Following \cite[Remark 4.15]{damiolini:2020}, we can express the Sugawara operators associated to $D_j=-z^{j+1}\frac{d}{dz}$ (for a formal coordinate $z$ on $\mathbb{D}$) concretely.  We have\footnote{Note that there is a difference with the signs in the corresponding definitions in \cite[Corollary 3.2]{looijenga:2013} and \cite[Page 1667]{damiolini:2020}, related to a sign difference in the generators $D_j$ of the Virasoro algebras in these papers.  Our sign agrees with the more standard conventions for the Virasoro algebra and Sugawara operators, as in e.g. \cite{kac:1990,kac.raina:2013}.}
$$T_{\mathfrak{h}}(D_j)=\frac{1}{2\left(c_{\mathfrak{sl}_r}+h^{\vee}_{\mathfrak{sl}_r}\right)}\sum_i \circdot D_j(A_i)\circ B_i \circdot,$$
where the $A_i$ and $B_j$ are dual bases for $\omega_{\mathbb{D}^{\degree}}\otimes \mathfrak{h}^{\mathcal{SL}_r^{\pm}}_{\mathbb{D}^{\degree}}$ and $\mathfrak{h}^{\mathcal{SL}_r^{\pm}}_{\mathbb{D}^{\degree}}$ with respect to the perfect pairing $\langle.|.\rangle=\operatorname{res}_{\degree}^{\mathbb{D}}{(.|.)}$, and $\circdot .\ .\circdot$ refers to the normal ordering.  
If $C_k$ is an orthonormal basis of $\mathfrak{sl}_r$ with respect to the normalized Killing form, we can put \begin{equation}\label{twistedbasis}\widetilde{C}_k=\frac{1}{2}\left(C_k+d\Psi^{\pm} (C_k)\right),\end{equation} where $\Psi^{\pm}$ are as in (\ref{syminvol}) and (\ref{altinvol}). (We are thinking here of the $C_k$ and $d\Psi^{\pm} (C_k)$ as belonging to the two different marked points on $\calCt$, and are implicitly assuming these can be globally distinguished, which is always the case \'etale-locally.) We can then let the index $i$ for the $A_i$ and $B_i$ run over pairs $(k,l)\in \{1,\dots, \dim(\mathfrak{sl}_r)\}\times \mathbb{Z}$, and take $$A_i=\widetilde{C}_k z^{-l-1}dz \hspace{.5cm}\text{and}\hspace{.5cm} B_i=\widetilde{C}_kz^l.$$
To calculate the (vertical) symbol of the associated differential operator, we need to evaluate (for any $j\in\mathbb{Z}$, and any vector field $\tau_j$ on $S$ whose image under the Kodaira-Spencer map is represented by $D_j$) $$\langle \phi\otimes \phi|\rho_{D^{\scaleto{WZW}{2.5pt}}}(\tau_j)\rangle=\langle \overline{\phi}\otimes \overline{\phi}|T_{\mathfrak{h}}(D_j)\rangle$$
for any $\phi \in \pi_*T^{\vee}\rkdetsanti{r}$ mapping to $\overline{\phi}\in \mathfrak{h}^{\mathcal{SL}_r^{\pm}}\otimes \Omega_{\mathbb{D}^{\degree}}$.  It suffices to look at $\overline{\phi}$ of the form $\widetilde{C}_mz^ndz$.  We get (unpacking the normal ordering)
\begin{equation}\label{symbwzwcalc}
\begin{split}
2\left(c_{\mathfrak{sl}_r}+h^{\vee}_{\mathfrak{sl}_r}\right)&\left\langle \overline{\phi}\otimes \overline{\phi}|T_{\mathfrak{h}}(D_j)\right\rangle \\
=& 
\left\langle \widetilde{C}_mz^ndz\otimes \widetilde{C}_mz^ndz\big|\sum_i \circdot D_j(A_i)\circ B_i \circdot \right \rangle \\ 
=&\ \sum_{k}\sum_{-1+j<2l}\operatorname{res}_{\degree}^{\mathbb{D}}\left(\widetilde{C}_mz^ndz |-\widetilde{C}_kz^{-l-1+j +1}\right)\operatorname{res}_{\degree}^{\mathbb{D}}\left(\widetilde{C}_mz^ndz |\widetilde{C}_kz^l\right)\\
&+\sum_{k}\sum_{-1+j=2l}\frac{1}{2} \operatorname{res}_{\degree}^{\mathbb{D}}\left(\widetilde{C}_mz^ndz |-\widetilde{C}_kz^{-l-1+j+1}\right)\operatorname{res}_{\degree}^{\mathbb{D}}\left(\widetilde{C}_mz^ndz |\widetilde{C}_kz^l\right)\\
&+\sum_{k}\sum_{-1+j=2l}\frac{1}{2}\operatorname{res}_{\degree}^{\mathbb{D}} \left(\widetilde{C}_mz^ndz |\widetilde{C}_kz^l\right)\operatorname{res}_{\degree}^{\mathbb{D}}\left(\widetilde{C}_mz^ndz |-\widetilde{C}_kz^{-l-1+j+1}\right)\\
&+ \sum_{k}\sum_{-1+j>2l}\operatorname{res}_{\degree}^{\mathbb{D}}\left(\widetilde{C}_mz^ndz |\widetilde{C}_kz^l\right)\operatorname{res}_{\degree}^{\mathbb{D}}\left(\widetilde{C}_mz^ndz |-\widetilde{C}_kz^{-l-1+j+1}\right)\\
=& \sum_{l}-\delta_{n+j,l-1}\delta_{n+l,-1}\\
=&\ -\delta_{2n+j,-2}.
\end{split}\end{equation}

We can now turn our attention to the Prym-Hitchin connection.  To be able to compare the symbol of the Prym-Hitchin projective heat operator to that of the twisted WZW one, we have to express it in comparable terms.  For the Hitchin heat operator this is done in \cite{laszlo:1998} by expressing the symbol, in \v{C}ech terms, using the formal open covering of $\calC$ given by the punctured curve $\calC^{\degree}$ and the formal disk $\mathbb{D}$.  

The Prym-Hitchin symbol was defined in Definition \ref{hitchinsymbol} and Theorem \ref{PHconnection} by projecting down from the Hitchin symbol for $\calCt$.  In order to do this in the \v{C}ech language, we will first express the Hitchin symbol for $\calCt$ using a formal cover made of the twice punctured curve $\calCt^{\degree\degree}$, and two formal open disks $\widetilde{\mathbb{D}}=\mathbb{D}\coprod \mathbb{D}$, interchanged under the involution $\sigma$ (\'etale locally these can always be discriminated, which we will do for convenience).  The average of the two residues at the marked points gives an isomorphism
$$\operatorname{res}^{\widetilde{{\mathbb{D}}}}_{\degree \degree}:R^1\pi_* K_{\calCt/S}\cong \mathcal{O}_S$$ (the factor $\frac{1}{2}$ is there to make the isomorphism commensurate with the isomorphism $\operatorname{res}^{\mathbb{D}}_{\degree}$ on $\calC$).  The fact that we are projecting down the ordinary Hitchin symbol for $\calCt$ means that we are only interested in families of $\calCt$ coming from families $\calCt\rightarrow \calC$ of \'etale double covers, hence the elements of $R^1\pi T_{\calCt/S}$ we will see can be represented by identical pairs $(t,t)\in T^{\oplus 2}_{\mathbb{D}^{\degree}/S}$.  

We now want to evaluate the Hitchin symbol $\rho^{\operatorname{Hit}}$ on $\phi\otimes \phi$, where $$\phi\in \pi_* T^{\vee}_{\rkdetsanti{r}/S}\subset \pi_* T^{\vee}_{\rkdetstilde{r}/S}\cong \pi_*(\mathcal{E}nd^{0}(\mathcal{E})\otimes K_{\calCt/S}).$$  As we have an injection given by restriction
$$\begin{tikzcd}\pi_*(\mathcal{E}nd^{0}(\mathcal{E})\otimes K_{\calCt/S})\ar[r, hook]& \mathfrak{sl}_r\otimes \left(\Omega^1_{\mathbb{D}^{\degree}/S}\oplus\Omega^1_{\mathbb{D^{\degree}}/S}\right),\end{tikzcd}$$ 
we can represent such $\phi$ as $(\overline{\phi}_1,\overline{\phi}_2)$ given by pairs in $\mathfrak{sl}_r\otimes \left(\Omega^1_{\mathbb{D}^{\degree}/S}\oplus\Omega^1_{\mathbb{D}^{\degree}/S}\right)$ that are exchanged by the isomorphism $d\Psi^{\pm}\otimes \sigma^* $. 
We have (see \cite[(8.5)]{laszlo:1998})
$$\rho^{\Hit}(t,t)(\phi\otimes\phi)=\operatorname{res}^{\widetilde{{\mathbb{D}}}}_{\degree \degree}\ \Big(\left(\overline{\phi}_1,\overline{\phi}_2\right)\Big|(t,t).\left(\overline{\phi}_1,\overline{\phi}_2\right)\Big),$$ where $(.|.)$ is the normalized Killing form on $\mathfrak{sl}_r$.
Using a local formal coordinate $z$ on $\mathbb{D}$ as before, it suffices to look at  $t\in T_{\mathbb{D}^{\degree}}$ of the form $t=D_j=-z^{j+1}\frac{d}{dz}$, for some $j\in\mathbb{Z}$.  In the same vein, to compare with the symbol we calculated for $D^{\scaleto{WZW}{2.5pt}}$, we can just take $(\overline{\phi}_1,\overline{\phi_2})$ of the form  
$(C_mz^ndz, d\Psi^{\pm}(C_m)z^ndz)$. 
We then get 
\begin{equation}\label{symbphcalc}\begin{split} 
2(k+r)&\rho^{\Hit}(t,t)(\phi\otimes \phi) \\
&= \operatorname{res}_{\degree \degree}^{\widetilde{\mathbb{D}}}\  \Big(\left(\overline{\phi}_1,\overline{\phi}_2\right)\Big|(t,t).\left(\overline{\phi}_1,\overline{\phi}_2\right)\Big)\\ & = \operatorname{res}_{\degree \degree}^{\widetilde{\mathbb{D}}}\  \Big(\left(C_mz^ndz, d\Psi^{\pm}(C_m)z^ndz\right)|\left(D_j C_mz^ndz,D_jd\Psi^{\pm}(C_m)z^ndz\right)\Big)\\
& = \operatorname{res}_{\degree \degree}^{\widetilde{\mathbb{D}}}\left(-z^{2n+j+1}dz,-z^{2n+j+1}dz\right)\\ 
& =  -\delta_{2n+j,-2}.
\end{split}\end{equation}
Comparing (\ref{symbwzwcalc}) and (\ref{symbphcalc}) now gives the result.
\end{proof} 
This allows us to conclude
\begin{theorem}\label{equivlaszlo}
The projective connection induced by $D^{\scaleto{WZW}{2.5pt}}$ (the twisted WZW connection) is the same as the Prym-Hitchin connection.
\end{theorem}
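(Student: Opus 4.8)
The plan is to deduce Theorem \ref{equivlaszlo} as a formal consequence of the uniqueness clause in the Van Geemen--De Jong criterion (Theorem \ref{vgdj}), in the spirit of Laszlo's original comparison argument: the genuine content lies in Proposition \ref{symb-twisted-conf}, and the remaining argument is short. First I would recall that, as established in the proof of Theorem \ref{PHconnection}, the symbol $\rho=\tfrac{2}{r+k}\bigl(\rho^{\PH}\circ\kappa_{\calCt/S}\bigr)$ satisfies the three conditions \ref{vgdj-one}--\ref{vgdj-three} of Theorem \ref{vgdj} for the family $\rkdetsanti{r}\to S$ and the line bundle $\calP_r^{\otimes k}$: condition \ref{vgdj-one} follows from the commutative diagram there together with $\mu_{\calP_r^{k}}=\cup(r+k)[\calP_r]$ (Remark \ref{exprcanpf} and \cite[Proposition 3.6.1]{BBMP:2020}), condition \ref{vgdj-two} from Proposition \ref{dirim}(2), and condition \ref{vgdj-three} from Corollary \ref{H0N}. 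Consequently Theorem \ref{vgdj} furnishes a \emph{unique} projective heat operator on $\calP_r^{\otimes k}$ with symbol $\rho$, and by construction the Prym--Hitchin connection is the projective connection that this operator induces on $\pit_{e \ast}\bigl(\calP_r^{\otimes k}\bigr)$.

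Next I would invoke the two preceding propositions of this section: the one asserting that $D^{\scaleto{WZW}{2.5pt}}$ is a projective heat operator on $\rkdetsanti{r}$ (on the descent of the level-$k$ power of the positive generator of the Picard group of the twisted affine Grassmannian, which we identify with $\calP_r^{\otimes k}$), and Proposition \ref{symb-twisted-conf}, which computes its symbol to be precisely this $\rho$. Since a projective heat operator canonically determines the projective connection it induces on the direct image (Section \ref{backgroundconnections}), it suffices to see that $D^{\scaleto{WZW}{2.5pt}}$ agrees with the Prym--Hitchin heat operator. But both are projective heat operators on $\calP_r^{\otimes k}$ with the same symbol $\rho$, and $\rho$ satisfies the hypotheses of Theorem \ref{vgdj}; the uniqueness statement of that theorem then forces the two operators to coincide, and hence the two induced projective connections to agree, which is the assertion of the theorem.

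I do not expect a serious obstacle at this stage --- the real work was the symbol comparison of Proposition \ref{symb-twisted-conf} --- but one point deserves care, namely the cross-bookkeeping already alluded to above. One must pin down that the positive generator of the Picard group of the twisted affine Grassmannian descends to the Pfaffian bundle $\calP_r$ (so that its $\ell$-th power descends to $\calP_r^{\otimes \ell}$), that the level $\ell$ is identified with $k$, and that the scaling $\tfrac{2}{r+k}$ matches the normalisation $2\bigl(c_{\mathfrak{sl}_r}+h^{\vee}_{\mathfrak{sl}_r}\bigr)$ of the Segal--Sugawara construction; all of this is already arranged by the residue normalisations chosen in the proof of Proposition \ref{symb-twisted-conf}, so no additional computation is required.
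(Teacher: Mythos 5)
Your argument is exactly the paper's: the proof of Theorem \ref{equivlaszlo} consists precisely in combining Theorem \ref{PHconnection}, Proposition \ref{symb-twisted-conf}, and the uniqueness clause of Theorem \ref{vgdj}, as you do. The additional bookkeeping remarks about the line bundle identification and normalisations are sensible but not needed beyond what is already set up in Section \ref{twistedlaszlo}.
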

\begin{proof} This follows immediately from Theorem \ref{PHconnection}, Proposition \ref{symb-twisted-conf} and the uniqueness in Theorem \ref{vgdj}.
\end{proof}

\section{Anti-invariant level-rank duality}\label{level1}
\subsection{General statement of anti-invariant level-rank duality}
We can now state the Prym version of the level-rank duality we will be considering. 
We consider a fixed double étale cover $p :\widetilde{C} \rightarrow C$ and we denote by 
$\mathcal{N}^{*,+,ss}_{\GL_r}$ the moduli space 
of semi-stable rank-$r$ vector bundles $E$ on $\widetilde{C}$ that come equipped with an isomorphism 
$$\phi:\begin{tikzcd}\sigma^*(E)\ar[r,"\cong"]& E^{\vee} \otimes K_{\widetilde{C}}\end{tikzcd}
\qquad \text{satisfying} \qquad \phi= \sigma^\ast \phi^t. $$ 
Hence $\deg(E) = r(\widetilde{g}-1)$. If we choose the canonical linearisation on $K_{\widetilde{C}}= p^* K_C$,
then, as explained in \cite[\S 4]{zelaci:2019}, we can equip $p_*(E)$ with a non-degenerate $K_C$-valued quadratic form
and deduce, by a result of Mumford \cite[Page 184]{mumford:1971}, that $h^0(\widetilde{C},E) \  \text{mod} \  2$ is constant on each connected component
of $\mathcal{N}^{*,+,ss}_{\GL_r}$. Hence we can decompose
$$ \mathcal{N}^{*,+,ss}_{\GL_r} = \mathcal{N}^{{\rm even},+,ss}_{\GL_r} \sqcup \mathcal{N}^{{\rm odd},+,ss}_{\GL_r}. $$
Note that the moduli space $\mathcal{N}^{*,-,ss}_{\GL_r}$, parameterizing pairs $(E,\phi)$ as above with \break
$\phi= - \sigma^\ast \phi^t$, is non-canonically isomorphic to $\mathcal{N}^{*,+,ss}_{\GL_r}$.
In the particular case $r=1$, we will denote 
$$ \mathcal{N}^{{\rm even},+,ss}_{\GL_1} = P^{\rm even} \qquad \text{and}  \qquad
\mathcal{N}^{{\rm odd},+,ss}_{\GL_1} = P^{\rm odd}$$
the two connected components of $\mathrm{Nm}^{-1}(K_C) \subset \Pic_{\widetilde{C}}^{\widetilde{g}-1}$. 
We also note that for the $-$components we have
$\mathcal{N}^{*,-,ss}_{\GL_1} = \mathrm{Nm}^{-1}(K_C\otimes \Delta) \subset \Pic_{\widetilde{C}}^{\widetilde{g}-1}$.

Recall that on $\rksstartilde{r}$ (the moduli space of stable, rank-$r$ bundles on $\widetilde{C}$ of degree $r(\widetilde{g}-1)$) there exists a natural divisor $\Theta$, which as a set is given by $$\Theta=\{E\in \rksstartilde{r}\ |\ h^0(\widetilde{C},E)\neq 0\}.$$  The analog for anti-invariant bundles is given by
\begin{proposition}[{\cite{zelaci:twistconf}}]
There is a natural Cartier divisor $\Xi$ on $\mathcal{N}^{{\rm even},+,ss}_{\GL_r}$, defined on the stable locus as the underlying reduced divisor of the restriction of the divisor $\Theta$ on $\rksstartilde{r}$.  We have that $h^0(\mathcal{N}^{{\rm even},+,ss}_{\GL_r},\mathcal{O}(\Xi))=1$.  
\end{proposition}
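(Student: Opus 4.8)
The plan is to split the statement into two parts: the construction of $\Xi$ together with its Cartier property, and the computation $h^0(\mathcal{O}(\Xi))=1$.

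\emph{Construction of $\Xi$.} I would first work on the stable locus $\mathcal{N}^{even,+,s}_{\GL_r}$, where an étale-local universal bundle $\mathcal{E}\to\Ctilda\times\mathcal{N}^{even,+,s}_{\GL_r}$ is available, and form the relative cohomology complex $R\pi_{\ast}\mathcal{E}$ of the projection $\pi$ to the moduli space. Since $p$ is étale, $K_{\Ctilda}=p^{\ast}K_C$ is canonically $\sigma$-invariant and $\sigma^{\ast}$ commutes with the formation of cohomology; combining this with relative Serre duality on $\Ctilda$, the symmetry $\phi=\sigma^{\ast}\phi^t$ of $(E,\phi)$ upgrades to a self-duality of $R\pi_{\ast}\mathcal{E}$ up to a shift (concretely $R\Gamma(E)^{\vee}\cong R\Gamma(E^{\vee}\otimes K_{\Ctilda})[1]\cong R\Gamma(\sigma^{\ast}E)[1]\cong R\Gamma(E)[1]$). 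As in Zelaci's analysis of the Pfaffian line bundle (\cite[\S 4.3]{zelaci:thesis}, cf.\ Theorem~\ref{ThmZelaci}(c)), this self-duality produces a canonical square root $\mathcal{Q}$ of the determinant-of-cohomology line bundle together with a canonical \emph{Pfaffian section} $\mathrm{pf}$ whose square is the theta section, so that $\Theta\vert_{\mathcal{N}^{even,+,s}_{\GL_r}}=2\,\mathrm{div}(\mathrm{pf})$; one sets $\Xi:=\mathrm{div}(\mathrm{pf})$, a Cartier divisor. To see that $\Xi$ is indeed the underlying reduced divisor of $\Theta\vert$, I would check that along each component of its support the generic bundle has $h^0(\Ctilda,E)=2$ with the self-dual complex of corank $2$, so that $\mathrm{pf}$ vanishes there to first order (the local model being a generic one-parameter degeneration of a self-dual form to corank $2$), and also that the general stable even anti-invariant bundle has $h^0=0$, so that $\Theta\vert$ is a genuine divisor in the first place. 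Finally, to put $\Xi$ and $\mathcal{O}(\Xi)$ on the whole normal variety $\mathcal{N}^{even,+,ss}_{\GL_r}$, I would use that the strictly semistable locus has codimension $\geq 2$ (the count of Proposition~\ref{codimcount} carries over) and extend by Hartogs, or, as for the extension of $\mathcal{P}_r$ noted after Corollary~\ref{H0N}, descend $\mathrm{pf}$ from the smooth stack $\mathfrak{N}^{\pm}_{\GL_r}$ of \cite{heinloth:2010} along its good moduli space map using \cite{alper}; here the one thing to verify is that $-1\in\Aut(E)$ acts trivially on $\mathcal{Q}$, which it does exactly on the \emph{even} component because it acts by $(-1)^{h^0(\Ctilda,E)}$ (connected boundary stabilizers act trivially automatically), so the even/odd splitting is precisely the natural home for $\Xi$.

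\emph{Computation of $h^0$.} The inequality $h^0(\mathcal{O}(\Xi))\geq 1$ is immediate from $\mathrm{pf}\neq 0$, and by normality together with the codimension estimate it is enough to bound $h^0$ on the stable locus. I see two routes for $h^0\leq 1$. The first is a reduction to rank one: the determinant morphism $E\mapsto\det E$ sends $\mathcal{N}^{even,+,s}_{\GL_r}$ to a torsor under the Prym variety $\Prym(\Ctilda/C)$ inside $\Pic(\Ctilda)$, carrying its own Prym--theta divisor with one-dimensional space of sections by Mumford's classical analysis \cite{mumford:1974}; on the fibres $\mathcal{O}(\Xi)$ restricts to the Pfaffian bundle of the corresponding fixed-determinant anti-invariant moduli space (Theorem~\ref{ThmZelaci}(c)), so that $h^0(\mathcal{O}(\Xi))$ is computed from the Prym--theta bundle twisted by the direct image of the fibrewise Pfaffian bundle, and one is reduced to the fibrewise vanishing $h^0(\rkdetsplus{r},\mathcal{P}_r)=1$ plus a positivity input on the Prym torsor. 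The second, and the one most consonant with the rest of the paper, is to identify $H^0(\mathcal{N}^{even,+,ss}_{\GL_r},\mathcal{O}(\Xi))$ with a space of level-one twisted conformal blocks via the Hong--Kumar/Zelaci isomorphism discussed in Section~\ref{twistedlaszlo} and Appendix~\ref{appendixtwistedconf}, and to evaluate its dimension by the twisted Verlinde formula (\cite{damiolini:2020}), which returns $1$ at level one.

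I expect the inequality $h^0(\mathcal{O}(\Xi))\leq 1$ to be the main obstacle: the existence of $\Xi$ is essentially formal once the Pfaffian formalism is in place, but the vanishing of higher sections is not, and either route above carries real content (an induction on $r$ or the fixed-determinant case, or the twisted Verlinde computation). By comparison, the reducedness of $\Xi$ and its Cartier property across the strictly semistable boundary are routine given Proposition~\ref{codimcount}, normality (Proposition~\ref{normalmoduli}) and stack descent.
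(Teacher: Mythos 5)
Your construction of $\Xi$ is essentially correct and reconstructs what the paper merely cites: the paper's own proof is just a reference to Lemmas 6.2 and 6.3 of Zelaci's \cite{zelaci:twistconf}, together with the remark that tensoring by a fixed $L\in P^{even}$ carries Zelaci's divisor $\Xi_L$ on a component of $\cN^{+,s}_{\GL_r}$ over to $\mathcal{N}^{even,+,s}_{\GL_r}$, followed by the Hartogs extension across the codimension-$\geq 2$ semistable boundary. Your Pfaffian argument (self-duality of $R\pi_\ast\cE$ from $\phi=\sigma^\ast\phi^t$ and Serre duality, reducedness from generic corank $2$ on the even component, triviality of the $-\mathrm{Id}$ action on the square root exactly on the even component) is the right content behind that citation.

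The gap is in the $h^0\leq 1$ step, which you yourself flag as the main obstacle but which neither of your routes closes. Route 1 hinges on the claim $h^0(\rkdetsplus{r},\calP_r)=1$, which is false for $r\geq 2$: by the level-one duality of Section \ref{level1} (Zelaci's Theorem 6.4) this space is dual to $H^0(P^{even},\calP_{\Xi,1}^r)$ (plus the odd component when $r$ is even), hence has dimension $r^{g-1}$ (resp.\ $2r^{g-1}$), since $\mathcal{O}(\Xi)$ is a principal polarization on the $(g-1)$-dimensional Prym torsor; were it one-dimensional, the anti-invariant strange duality would be vacuous. Route 2 is not a proof either: $H^0(\mathcal{N}^{even,+,ss}_{\GL_r},\mathcal{O}(\Xi))$ is not a level-one twisted conformal block for a simply-connected group in the sense of Hong--Kumar --- that space is $H^0(\cN^{+,ss}_{\SL_r},\calP_r)$, whose twisted Verlinde dimension at level one is $r^{g-1}$, not $1$. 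The mechanism that actually produces $1$ is the theta-group computation of \S\ref{prymsdlevel1}: pulling back along the $\Prym[r]$-Galois cover $\cN^{+,ss}_{\SL_r}\times P^{even}\to\mathcal{N}^{even,+,ss}_{\GL_r}$ identifies $H^0(\mathcal{O}(\Xi))$ with $\left(H^0(\calP_r)\otimes H^0(\calP_{\Xi,1}^r)\right)^{\cG}$, where both factors are irreducible $r^{g-1}$-dimensional Heisenberg modules on which the center acts with opposite characters, so the invariants are one-dimensional. But that argument needs the Verlinde-type input $h^0(\calP_r)=r^{g-1}$, and in the form the paper states it, it is Zelaci's Theorem 6.4 --- which Section \ref{level1} proves \emph{using} the present proposition. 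So as written your proposal either asserts a false intermediate fact or runs in a circle; a self-contained proof must follow Zelaci's Lemma 6.3 (or import $h^0(\calP_r)=r^{g-1}$ independently) rather than either of the routes you sketch.
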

\begin{proof} This property is essentially Lemmas 6.2 and 6.3 in \cite{zelaci:twistconf}, though it is formulated there slightly differently, in terms of a divisor $\Xi_L$ on one connected component of $\mathcal{N}^{+,s}_{\GL_r}$, depending on a line bundle $L$ in 
$P^{\rm even}$, and  supported on bundles $E$ such that $h^0(E\otimes L)\neq 0$.  Tensoring with this $L$ gives an isomorphism of this connected component of $\mathcal{N}^{+,s}_{\GL_r}$ with  $\mathcal{N}^{{\rm even},+,s}_{\GL_r}$. We then can extend the divisor from $\mathcal{N}^{{\rm even},+,s}_{\GL_r}$
to $\mathcal{N}^{{\rm even},+,ss}_{\GL_r}$ by Hartogs's theorem.
\end{proof}
We will put $\calP_{\Xi,r}=\mathcal{O}(\Xi)$.
Because semi-stability is preserved by tensor product and because of \cite[Proposition 4.2]{zelaci:2019} we have a morphism induced by tensor product
$$\begin{tikzcd}t:\mathcal{N}^{+,ss}_{\SL_r}\times \mathcal{N}^{{\rm even},+,ss}_{\GL_k}\ar[r]& \mathcal{N}^{{\rm even},+,ss}_{\GL_{kr}}. \end{tikzcd}$$

If $r$ is even, we can extend this map to the odd-component of $\mathcal{N}^{*,+,ss}_{\GL_k}$
$$\begin{tikzcd}t:\mathcal{N}^{+,ss}_{\SL_r}\times \mathcal{N}^{{\rm odd},+,ss}_{\GL_k}\ar[r]& \mathcal{N}^{{\rm even},+,ss}_{\GL_{kr}} \end{tikzcd}$$
Indeed, for any vector bundles $E \in \mathcal{N}^{+,ss}_{\SL_r}$ and $F \in \mathcal{N}^{{\rm odd},+,ss}_{\GL_k}$ we see that $h^0(E \otimes F)$
is even, since this number is constant under deformation and is even when $E$ is the trivial anti-invariant bundle $\mathcal{O}^r$.
We have, from the seesaw theorem \cite[Corollary 6, \S 5]{mumford:2008}, that 
$$t^*\calP_{\Xi,kr}\cong \calP^k_r\boxtimes \calP_{\Xi,k}^r.$$  
If $r$ is even, we define the line bundle $\calP_{\Xi,k}^r$ on $\mathcal{N}^{{\rm odd},+,ss}_{\GL_k}$ as the 
restriction\break $\calP_{\Xi,k}^r = \mathcal{O}(\frac{r}{2} \Theta)|_{\mathcal{N}^{{\rm odd},+,ss}_{\GL_k}}$.

Considering the above introduced moduli spaces for the relative curve $p: \widetilde{\mathcal{C}} \rightarrow
\mathcal{C}$ over $S$ and denoting by $\pi$ the projection maps to $S$, we obtain by taking the direct image 
under $\pi$ of these line bundles to $S$ a morphism of locally free sheaves over $S$ \begin{equation}\label{strangedualityatlast}\begin{tikzcd}
SD: \left(\pi_*\calP^r_{\Xi,k}\right)^{\vee} \ar[r] & \pi_* \calP^k_r,\end{tikzcd} \end{equation}
which we refer to as the \emph{anti-invariant level-rank duality} or \emph{anti-invariant strange duality}. Note
that the moduli spaces appearing on the LHS of (\ref{strangedualityatlast}) depend on $r$: if $r$ is odd, we take the component $\mathcal{N}^{{\rm even},+,ss}_{\GL_k}$ and if $r$ is even, we take the two components
$\mathcal{N}^{*,+,ss}_{\GL_k}$.

We will show that the vector bundle map  \eqref{strangedualityatlast} is an isomorphism for $k=1$ and 
that it is projectively flat with respect to the Prym-Hitchin connections for all $k$ and all $r$.

\subsection{Classical level-rank duality at level one} In preparation for discussing flatness of level-rank duality at level one between the abelian and the higher-rank Prym varieties of an unramified double cover $\calCt \to \calC$, we recall the classical case in our set-up.
  We consider here a smooth proper relative curve $\calC \to S$ of genus $g$, and the associated relative moduli spaces $\Pic_{\calC/S}^{g-1} \to S$ of line bundles of degree $g-1$, and $\cM^s_{\SL_r} \to S$ of stable rank $r$ vector bundles with trivial determinant. Consider also the natural line bundles $\LL_{\Theta,1}= \cO(\Theta) \to \Pic_{\calC/S}^{g-1}$ and $\LL_r \to \rkdets{r}$ (the ample generator of the relative Picard group).
  \begin{theorem}\label{sd} In the above setting we have
  \begin{enumerate}[(a)]
    \item \label{sd-one} (Beauville--Narasimhan--Ramanan \cite[Theorem 3]{beauville.narasimhan.ramanan:1989}) There is a natural non-degenerate duality, unique up to $\cO_S^\times$,
      \begin{equation}\label{sd-map}\begin{tikzcd}
       \pi_\ast \LL_{\Theta,1}^r \otimes \pi_\ast \LL_r \ar[r]& \cO_S .\end{tikzcd}
      \end{equation}
    \item \label{sd-two} (Belkale \cite[Proposition 1.2]{belkale:2009}) This duality is projectively flat with respect to the Mumford--Welters and the Hitchin connection, respectively.
  \end{enumerate}
\end{theorem}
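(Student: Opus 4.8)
The plan is to derive part~\ref{sd-one} from \cite[Theorem~3]{beauville.narasimhan.ramanan:1989} and part~\ref{sd-two} from \cite[Proposition~1.2]{belkale:2009}, checking in each case that the argument survives passage to the relative situation over $S$ (it does, as all the relevant direct images are locally free of constant rank, so everything can be verified fibrewise). For part~\ref{sd-one} I would first recall the origin of the pairing: one considers the tensor-product morphism $\tau\colon\rkdets{r}\times_S\Pic^{g-1}_{\calC/S}\to\cM^{\ast}_{\GL_r}$, $(E,L)\mapsto E\otimes L$, where $\cM^{\ast}_{\GL_r}$ denotes the relative moduli space of semi-stable rank-$r$ bundles of degree $r(g-1)$ on $\calC$, which carries a natural theta divisor $\Theta=\{F\mid h^0(\calC_s,F)\neq 0\}$ with $h^0(\cO(\Theta))=1$. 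Restricting $\tau^\ast\cO(\Theta)$ to the two families of slices and invoking the seesaw principle \cite[Corollary~6, \S~5]{mumford:2008} gives $\tau^\ast\cO(\Theta)\cong\LL_r\boxtimes\LL_{\Theta,1}^{r}$, so that the K\"unneth decomposition $\pi_\ast\!\left(\LL_r\boxtimes\LL_{\Theta,1}^{r}\right)=\pi_\ast\LL_r\otimes\pi_\ast\LL_{\Theta,1}^{r}$ identifies the pullback of the theta section with the map \eqref{sd-map}; its non-degeneracy is exactly \cite[Theorem~3]{beauville.narasimhan.ramanan:1989}, the geometric input being that a general stable bundle $E$ admits a theta divisor, i.e.\ $\{L\mid h^0(E\otimes L)\neq 0\}\subsetneq\Pic^{g-1}_{\calC/S}$, together with the equality of ranks $\rk\pi_\ast\LL_r=\rk\pi_\ast\LL_{\Theta,1}^{r}=r^{g}$.

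For part~\ref{sd-two} I would follow Belkale's approach. The structural observation is that, since $h^0(\cM^{\ast}_{\GL_r},\cO(\Theta))=1$ fibrewise, once $\cO(\Theta)$ is equipped with any projective connection its unique section $s_\Theta$ (which spans $\pi_\ast\cO(\Theta)$) is automatically projectively flat; therefore $SD$ is projectively flat as soon as $\tau$ is a flat morphism, i.e.\ as soon as $\tau^\ast$ of a suitable connection on $\cO(\Theta)$ equals the tensor product of the Hitchin connection on $\LL_r$ and the Mumford--Welters connection on $\LL_{\Theta,1}^{r}$ under $\tau^\ast\cO(\Theta)\cong\LL_r\boxtimes\LL_{\Theta,1}^{r}$. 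Such a connection on $\cO(\Theta)$ may be built from a Hitchin-type heat operator via Theorem~\ref{vgdj}, using that $\cM^{\ast}_{\GL_r}$ is a finite \'etale quotient of $\rkdets{r}\times_S\Jac_{\calC/S}$ and that $\cO(\Theta)$ sits at level one; alternatively one can avoid $\cM^{\ast}_{\GL_r}$ and argue on bundles of conformal blocks, where the relevant co-pairing comes from propagation of vacua and is manifestly flat for the WZW connection, transferring the conclusion back via Laszlo's theorem on the $\mathrm{SL}_r$ side and its abelian analogue on the $\Pic^{g-1}$ side.

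The main obstacle is the flatness of $\tau$, which I would verify on symbols. Along the image of $\tau$ the relative tangent bundle splits, compatibly with $\tau$, as $R^1\pi_\ast\End(\cE)=R^1\pi_\ast\End^0(\cE)\oplus R^1\pi_\ast\cO_\calC$, the first summand being $T_{\rkdets{r}/S}$ and the second $T_{\Pic^{g-1}_{\calC/S}/S}$, and the Hitchin quadratic differential on $\cM^{\ast}_{\GL_r}$ respects this block decomposition, restricting to the $\mathrm{SL}_r$-Hitchin symbol on the first block and to the abelian Welters symbol on the second. By the uniqueness part of Theorem~\ref{vgdj}, the pulled-back heat operator is then forced to be the tensor product of the Hitchin and the Mumford--Welters heat operators, which is precisely the flatness of $\tau$, and hence of $SD$. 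The only genuinely delicate point along this route is the bookkeeping of the level-one normalisation constants (the $\tfrac{1}{r+k}$-type factor of Theorem~\ref{existenceconnection} on the $\mathrm{SL}_r$ side and the corresponding factor for order-$r$ theta functions on $\Pic^{g-1}$) and of the \'etale covering relating $\cM^{\ast}_{\GL_r}$ to $\rkdets{r}\times_S\Jac_{\calC/S}$; no idea beyond those of \cite{beauville.narasimhan.ramanan:1989} and \cite{belkale:2009} is required, and the relative statement then follows from the absolute one by base change.
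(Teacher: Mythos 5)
Your part~(\ref{sd-one}) follows the same geometric setup as the paper (the tensor-product map to $\rksstar{r}$, the see-saw identification $\tau^\ast\cO(\Theta)\cong\LL^r_{\Theta,1}\boxtimes\LL_r$, and the rank count $r^g$), but you treat non-degeneracy as a black-box citation of Beauville--Narasimhan--Ramanan, whereas the paper re-derives it by exhibiting (\ref{sd-map}) as a non-zero intertwiner of irreducible representations of the theta group $\cG(\LL^r_{\Theta,1})$. This is not a cosmetic difference: the theta-group structure is exactly what the paper then reuses to prove part~(\ref{sd-two}), and it is absent from your write-up.

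For part~(\ref{sd-two}) your route is genuinely different from the paper's, and it is the place where your argument has real gaps. The paper (following Belkale) never constructs a connection on $\cO(\Theta)\to\rksstar{r}$ at all: it observes that the $\Pic[r]$-action preserves the Hitchin connection (because $\rho^{\Hit}$ is $\Pic[r]$-equivariant and the heat operator with a given symbol is unique by Theorem~\ref{vgdj}), so that $(\pi_\ast\LL_r)^\ast$ and $\pi_\ast\LL^r_{\Theta,1}$ are both irreducible theta-group representations carrying compatible projective connections, and then invokes Welters' uniqueness of the projective connection compatible with such an action to conclude that the intertwiner is flat. Your strategy instead requires (i) producing a projective heat operator on $\cO(\Theta)$ over $\rksstar{r}$ via Theorem~\ref{vgdj}, and (ii) showing by a symbol computation that its pullback under $\tau$ is the tensor product of the Hitchin and Mumford--Welters heat operators. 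Neither step is carried out. Step (i) is not routine: on $\rksstar{r}$ one has $R^1\pi_\ast\cO\cong R^1\pi_\ast\cO_{\Pic^{g-1}_{\calC/S}}\neq 0$, so condition~(\ref{vgdj-two}) of Theorem~\ref{vgdj} is a genuine isomorphism statement that must be checked, and the paper nowhere establishes a Hitchin connection for degree-$r(g-1)$ $\GL_r$-bundles. Step (ii) is precisely the ``bookkeeping of normalisation constants'' you defer: the $\SL_r$-block comes with the factor $\tfrac{1}{r+1}$ while the trace block must reproduce the Welters normalisation for $\cO(r\Theta)$, and until these are matched the uniqueness argument you appeal to does not apply. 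So while your outline is plausible and could probably be completed, as written it replaces the paper's short representation-theoretic argument with two substantial unproved assertions; if you want to follow the paper, the key missing ingredient is the observation that conjugating the heat operator by a section of the theta group preserves its symbol, together with \cite[Proposition 2.7]{welters:1983}.
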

  We will detail the argument, since it will apply practically verbatim to the Prym case.
\begin{proof}  Consider the fiber-wise product $\Pic_{\calC/S}^{g-1} \times_S \rkdets{r}$ and its quotient by the natural action of $\Pic[r]$, the $r$-torsion of the Picard scheme, which is naturally identified with the relative moduli space $\cM^{*,s}_{\GL_r}$ of stable rank $r$ vector bundles of degree $r(g-1)$,
\begin{equation}\label{descent}
 \begin{tikzcd}
 \LL_{\Theta,1}^r \boxtimes \LL_r \arrow[r] \arrow[d] & \LL' \arrow[d] \\
 \Pic^{g-1} \times_S \rkdets{r} \arrow[r, swap, "{/\Pic[r]}"] & \rksstar{r}.
 \end{tikzcd} 
\end{equation}
Here, $\LL'$ has a one-dimensional space of sections corresponding to the natural Brill--Noether divisor, and its pull-back is isomorphic to $\LL^r_{\Theta,1}\boxtimes \LL_r$ by the see-saw principle. In particular, it is possible to choose a lift of the action of $\Pic[r]$ to the total space of $\LL^r_{\Theta,1}\boxtimes \LL_r$.

Now define the groups $\cG(\LL^r_{\Theta,1}),\cG(\LL_r),\cG(\LL^r_{\Theta,1}\boxtimes \LL_r)$ to be the central extensions of $\Pic[r]$ which act naturally on the total spaces of the respective line bundles. The first of these is an instance of Mumford's \emph{theta group} (\cite{mumford:1966}, \cite[Chapter 6]{birkenhake.lange:2004}), and it plays (through its representation theory) in particular a crucial role in the original construction of the Mumford--Welters connection, which we will make use of. We obtain a commutative diagram
\[
  \begin{tikzcd}
    & 1 \arrow[d] & 1 \arrow[d] & & \\
    & \GG_m \arrow[r, equals] \arrow[d, swap, "{\lambda \mapsto (\lambda,\lambda^{-1})}"] & \GG_m  \arrow[d, "{\lambda \mapsto (\lambda,\lambda^{-1})}"] & & \\
    1 \arrow[r] & \GG_m \times \GG_m \arrow[r] \arrow[d, swap, "{(\lambda,\lambda')\mapsto \lambda\lambda'}"] & \cG(\LL^r_{\Theta,1}) \underset{{\Pic[r]}}{\times} \cG(\LL_r) \arrow[r] \arrow[d] & \Pic[r] \arrow[r] \arrow[d, equals] & 0 \\
    1 \arrow[r] & \GG_m \arrow[r] \arrow[d] & \cG(\LL^r_{\Theta,1} \boxtimes \LL_r) \arrow[r] \arrow[d] & \Pic[r] \arrow[r] & 0 \\
    & 1 & 1. & &
  \end{tikzcd}
\]
As we just observed, the bottom row is split since the $\Pic[r]$-action can be lifted; choose a splitting $\sigma: \Pic[r] \to \cG(\LL^r_{\Theta,1} \boxtimes \LL)$, and use it to define a morphism $\phi: \cG(\LL^r_{\Theta,1}) \to \cG(\LL_r)$ by requiring that
\[
  \pi(g,\phi(g)) = \sigma(\pi(g)) \qquad \forall g \in \cG(\LL^r_{\Theta,1}) ,
\]
(where we denote all projections to $\Pic[r]$ by $\pi$). It is clear from the definition that $\phi(\lambda) = \lambda^{-1}$, and in particular $\phi$ is an isomorphism.

Taking the direct image of (\ref{descent}) under the projection to $S$ and using $\phi$ to act with the theta group $\cG(\LL^r_{\Theta,1})$, we therefore obtain an equivariant map of $\cG(\LL^r_{\Theta,1})$-linearized sheaves (\ref{sd-map}).
The center $\GG_m$ of $\cG(\LL^r_{\Theta,1})$ acts on $\pi_\ast\LL$ via $\lambda \mapsto \lambda^{-1}$, so this has to be a direct sum of several copies of the dual of the standard representation $\pi_\ast \cO(\LL^r_{\Theta,1})$. As the rank of $\pi_\ast \LL_r$ is known to be $r^g$ \cite{beauville.narasimhan.ramanan:1989}, there is only a single copy present, so that furthermore due to equivariance the pairing has to be non-degenerate.

As for \ref{sd-two}, first we observe (following Belkale \cite[Corollary 4.2]{belkale:2009}) that the action of the theta group $\cG(\LL^r_{\Theta,1})$ on $\pi_\ast \LL^k_r$ leaves the Hitchin connection invariant: this follows since the symbol map $\rho^{\Hit}$ is $\Pic[r]$-equivariant.  Taking the conjugate of the heat operator $D$ defining the connection with a section $\gamma$ of $\cG(\LL^r_{\Theta,1})$, we obtain a heat operator $\gamma \circ D \circ \gamma^{-1}$ with the \emph{same} symbol, so that it has to coincide with $D$ by the uniqueness statement in Theorem \ref{vgdj}.

For $k=1$, consider now the divisor $\cI$ of $\PP \pi_\ast \left( \LL^r_{\Theta,1} \boxtimes \LL_r \right)$ defined by the kernel of the map (\ref{sd-map}) and its pull-back to $\PP \pi_\ast \LL^r_{\Theta,1} \times_S \PP \pi_\ast \LL_r$, where it defines a section of $\cO(1,1)$ (up to scale). As the pairing is non-degenerate, the morphism induced by $\cI$
\begin{equation*}\begin{tikzcd}
  \PP \pi_\ast \LL^r_{\Theta,1} \ni p
  \ar[r,mapsto] &
  \cI \cap \{ p \} \times \PP \pi_\ast \LL_r \in  \left( \PP \pi_\ast \LL_r \right)^{\vee}
\end{tikzcd}\end{equation*}
is defined everywhere. By its definition, it descends from the non-zero intertwiner $\pi_\ast \LL^r_{\Theta,1} \to \left(\pi_\ast \LL_r \right)^\ast$ of irreducible (standard) representations of the theta group $\cG(\LL^r_{\Theta,1})$ given by (\ref{sd-map}). But as there is a unique projective connection compatible with this action (cf. \cite[Proposition 2.7]{welters:1983}), this proves the assertion \ref{sd-two}.
\end{proof}

 \subsection{The Prym case at level one}\label{prymsdlevel1} We can now observe how also for Prym varieties flatness of level-rank duality at level one follows from the same reasoning.

We consider here the case when $r$ is even. When $r$ is odd, we only consider the connected component
$P^{\rm even}$ on the LHS of (\ref{strangedualityatlast}). We have a diagram
\begin{equation}\label{Prym-descent}
 \begin{tikzcd}
 \calP^r_{\Xi,1}
  \boxtimes \calP_r \arrow[r] \arrow[d] & \calP' \arrow[d] \\
\left( P^{\rm even} \cup P^{\rm odd} \right) \times_S \mathcal{N}^{+,ss}_{\SL_r} \arrow[r, swap, "{/\Pr[r]}"] & \mathcal{N}^{{\rm even},+,ss}_{\GL_r},
 \end{tikzcd} 
\end{equation}
where $\Pr[r]$ denotes the group of $r$-torsion points in the connected component of the origin of $\mathrm{Nm}^{-1}(\mathcal{O})$.
We then obtain, exactly as before, an equivariant map of $\cG(r\Xi)$-linearized sheaves
\begin{equation}\label{Prym-sd-map}\begin{tikzcd}
   \pi_\ast \calP^r_{\Xi,1} \otimes \pi_\ast \calP_r \ar[r] & \pi_\ast \calP' .
\end{tikzcd}\end{equation}
\begin{theorem}[{Zelaci \cite[Theorem 6.4]{zelaci:twistconf}}] $\pi_\ast \calP' \cong \cO_S$ and the pairing (\ref{Prym-sd-map}) is non-degenerate, i.e., for any $s \in S$ the pairing induces an isomorphism
\begin{itemize}
 \item if $r$ is even, $H^0(P^{\rm even}, \calP^r_{\Xi,1})^\vee \oplus  H^0(P^{\rm odd}, \calP^r_{\Xi,1})^\vee   \stackrel{\sim} \longrightarrow H^0(\mathcal{N}^{+,ss}_{\SL_r}, \calP_r ), $
 \item if $r$ is odd, $H^0(P^{\rm even}, \calP^r_{\Xi,1})^\vee \stackrel{\sim} \longrightarrow   
 H^0(\mathcal{N}^{+,ss}_{\SL_r}, \calP_r )$.
\end{itemize}
\end{theorem}

\begin{proof} We will reuse here the notation $\mathcal{N}$ from (\ref{moduliN}).
We recall, see (\ref{iotas}), that there exists a natural map $\iota_{\SL_r}: \mathcal{N}^{+,s}_{\SL_r} \rightarrow
\mathcal{N}$, which is an isomorphism if $r$ is odd, and a double étale cover if $r$ is even. Hence, if $r$ is odd,
$H^0(\mathcal{N}^{+,ss}_{\SL_r}, \calP_r ) = H^0(\mathcal{N}^{+,s}_{\SL_r}, \calP_r ) =
H^0(\mathcal{N},\calP_r)$ and the theorem follows from \cite[Theorem 6.4]{zelaci:twistconf}. If $r$ is even, we need
to add a description of the line bundles over $\mathcal{N}$. We consider, as before, the tensor product maps
$$ t^{\rm even} : \mathcal{N} \times P^{\rm even} \rightarrow \mathcal{N}^{{\rm even},+,s}_{\GL_r} \qquad \text{and}
\qquad t^{\rm odd} : \mathcal{N} \times P^{\rm odd} \rightarrow \mathcal{N}^{{\rm even},+,s}_{\GL_r},$$ using the fact that $\iota_{\GL_r}$ is an isomorphism.
Then we observe that, by the see-saw theorem, we obtain two line bundles $\overline{\mathcal{P}_r}$
and $\overline{\mathcal{P}'_r}$ over $\mathcal{N}$ satisfying
$$
(t^{\rm even})^* \calP_{\Xi,r} = \overline{\mathcal{P}_r} \boxtimes \calP^r_{\Xi,1} \qquad \text{and}
\qquad (t^{\rm odd})^* \calP_{\Xi,r} = \overline{\mathcal{P}'_r} \boxtimes \calP^r_{\Xi,1}. $$
Repeating the argument of \cite[Theorem 6.4]{zelaci:twistconf} we obtain two isomorphisms
$$  H^0(P^{\rm even} , \calP^r_{\Xi,1})^\vee   \stackrel{\sim} \longrightarrow H^0(\mathcal{N}, \overline{\mathcal{P}_r})
 \qquad \text{and}
\qquad   H^0(P^{\rm odd}, \calP^r_{\Xi,1})^\vee \stackrel{\sim} \longrightarrow  H^0(\mathcal{N}, \overline{\mathcal{P}'_r}).$$
 Now we can conclude since $(\iota_{\SL_r})^* (\overline{\mathcal{P}_r}) = \iota_{\SL_r}^* (\overline{\mathcal{P}'_r}) =  \mathcal{P}_r$ and $(\iota_{\SL_r})_* (\mathcal{P}_r) = 
 \overline{\mathcal{P}_r} \oplus \overline{\mathcal{P}'_r}$.
\end{proof}

 \begin{corollary}\label{leveloneflat} The corresponding duality
   \[ \begin{tikzcd}
    \left( \PP \pi_{\ast} \calP^r_{\Xi,1} \right)^{\vee}  \ar[r] & \PP \pi_\ast \calP_r
     \end{tikzcd}
   \]
   is projectively flat with respect to the Prym--Hitchin and the Mumford--Welters connection, respectively.
 \end{corollary}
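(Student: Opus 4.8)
The plan is to run the level-one argument from part~\ref{sd-two} of Theorem~\ref{sd} almost verbatim, with Mumford's theta group $\cG(\LL^r_{\Theta,1})$ replaced by the Prym theta group $\cG(r\Xi)$ and Hitchin's connection replaced by the Prym--Hitchin connection. First I would spell out the theta-group formalism already implicit in (\ref{Prym-descent}) and (\ref{Prym-sd-map}): the group $\Pr[r]$ acts on $\left(P^{even}\cup P^{odd}\right)\times_S \mathcal{N}^{+,ss}_{\SL_r}$ by $\alpha\cdot(L,E)=(L\otimes\alpha,\,E\otimes\alpha^{-1})$, with quotient $\mathcal{N}^{even,+,ss}_{\GL_r}$; since by the see-saw theorem $\calP^r_{\Xi,1}\boxtimes\calP_r$ is the pull-back of $\calP'$, this action lifts to the total space. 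Running the diagram of central extensions exactly as in the proof of Theorem~\ref{sd}, one obtains central extensions $\cG(r\Xi)$ and $\cG(\calP_r)$ of $\Pr[r]$ by $\GG_m$, an isomorphism $\phi\colon\cG(r\Xi)\to\cG(\calP_r)$ inverting the centres, and a $\cG(r\Xi)$-equivariant pairing of linearised sheaves (\ref{Prym-sd-map}).

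Next I would check that the Prym--Hitchin connection of Theorem~\ref{PHconnection} at level $k=1$ on $\pi_\ast\calP_r$ is invariant under the $\cG(r\Xi)$-action. Following Belkale \cite[Corollary 4.2]{belkale:2009}, this reduces to the $\Pr[r]$-equivariance of the symbol $\rho=\tfrac{2}{r+1}(\rho^{\PH}\circ\kappa_{\calCt/S})$: unwinding Definition~\ref{hitchinsymbol}, $\rho^{\PH}$ is the ordinary Hitchin symbol post-composed with the averaging projection $\Sym^2 p_\sigma\circ\iota_\pm^\ast$, the Hitchin symbol is equivariant for the action of $\Pic_{\calCt}[r]$, and both $p_\sigma$ and $\iota_\pm^\ast$ commute with tensoring by line bundles from $\Pr[r]\subset\Pic_{\calCt}[r]$; hence $\rho$ is $\Pr[r]$-equivariant. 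Conjugating the projective heat operator $D$ that defines the connection by a local section $\gamma$ of $\cG(r\Xi)$ then produces a projective heat operator $\gamma\circ D\circ\gamma^{-1}$ with the \emph{same} symbol $\rho$, so it coincides with $D$ by the uniqueness in Theorem~\ref{vgdj}, and the connection is $\cG(r\Xi)$-invariant.

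Finally I would conclude as in the $k=1$ part of the proof of Theorem~\ref{sd}: by the non-degeneracy of (\ref{Prym-sd-map}) (the preceding theorem of Zelaci), its kernel defines a section of $\cO(1,1)$ on $\PP\pi_\ast\calP^r_{\Xi,1}\times_S\PP\pi_\ast\calP_r$, hence an everywhere-defined morphism $\PP\pi_\ast\calP_r\to\left(\PP\pi_\ast\calP^r_{\Xi,1}\right)^\ast$ descending from the scalar-unique $\cG(r\Xi)$-intertwiner of the irreducible standard representations; since $\pi_\ast\calP^r_{\Xi,1}$ carries the Mumford--Welters connection, which by Welters \cite[Proposition 2.7]{welters:1983} is the unique projective connection compatible with the theta-group action, and the Prym--Hitchin connection is $\cG(r\Xi)$-compatible by the previous step, the morphism is projectively flat. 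The hard part is not any single calculation but confirming that each ingredient of the classical proof has a clean Prym counterpart: the non-degeneracy (supplied by Zelaci), Welters' uniqueness (which still applies because $P^{even}$ and $P^{odd}$ are torsors under the Prym abelian variety), and --- the one genuinely new input --- the $\Pr[r]$-equivariance of the Prym--Hitchin symbol, plus the minor bookkeeping of the two components $P^{even}\cup P^{odd}$ when $r$ is even.
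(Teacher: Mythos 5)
Your proposal is correct and follows essentially the same route as the paper: the paper's own proof simply observes that the argument of Theorem~\ref{sd} transfers once one checks compatibility of the Prym--Hitchin connection with the theta-group action, which it reduces (exactly as you do) to the $\Pr[r]$-invariance of the symbol $\rho^{\PH}$ together with the uniqueness in Theorem~\ref{vgdj}. Your write-up merely fills in more of the bookkeeping (the lifted $\Pr[r]$-action, the central-extension diagram, and the component issue for $P^{even}\cup P^{odd}$) that the paper leaves implicit.
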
\label{anti-lr-level1}
\begin{proof}
As the proof of Theorem \ref{sd} relies only on the uniqueness of irreducible representations of theta groups for which the center acts by $\lambda \mapsto \pm\lambda$, compatibility of the connections with the theta group actions, and uniqueness of the compatible connection on projective bundles coming from these irreducible representations, we only need to check the compatibility. But this follows by the same use of Theorem \ref{vgdj}, as the symbol $\rho^{\PH}$ is $\Pr[r]$-invariant.
\end{proof}

\section{Flatness of rank-level duality in general}\label{generalflat}
\subsection{}We have constructed the flat projective Prym-Hitchin connection on $\pi_*\calP^k_r$ in Section \ref{atlast}.  We can also consider a flat projective connection on $\pi_*\calP^r_{\Xi,k}$, as follows. If $r$ is even, we notice that the two moduli spaces 
$\mathcal{N}_{\mathrm{GL}_k}^{{\rm even},+,ss}$ and $\mathcal{N}_{\mathrm{GL}_k}^{{\rm odd},+,ss}$ are (non-canonically) isomorphic, so it will be enough to
describe the flat projective connection for one of these two components. There is a Galois cover 
$$P^{\rm even}_{\calCt/\calC}\times \mathcal{N}^{+,ss}_{\mathrm{SL}_k} \rightarrow \mathcal{N}_{\mathrm{GL}_k}^{{\rm even},+,ss}$$
given by taking tensor products.  The Galois group consists of the $k$-torsion points in $\Pr_{\calCt/\calC}$, and similar to the case considered in Section \ref{prymsdlevel1}, 
the theta group $\cG(\calP_{\Xi,1}^r)$ also acts on $\calP_{\Xi,1}^k \boxtimes \calP^r_k$.  We have $$\pi_*\calP_{\Xi,k}^r\cong \left(\pi_* \calP^{k}_{\Xi,1}\otimes \pi_*\calP^{r}_k\right)^{\cG(\calP_{\Xi,1}^r)},$$ and as  $\cG(\calP_{\Xi,1}^r)$ preserves the flat projective connections on $\pi_*\calP^{k}_{\Xi,1}$ and $\pi_*\calP^{r}_k$, this defines a flat projective connection on $\pi_*\calP_{\Xi,k}^r$.

\subsection{}We are now ready to show
\begin{theorem}\label{SDflatgeneral} The morphism $SD$ from (\ref{strangedualityatlast}) is flat for all $r$ and $k$.
\end{theorem}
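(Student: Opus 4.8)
The plan is to transpose Belkale's argument for classical level-rank duality \cite{belkale:2009} to the twisted setting, using as the essential inputs the identification of the Prym--Hitchin connection with the twisted WZW connection (Theorem \ref{equivlaszlo}) and the flatness of the branching morphisms attached to conformal embeddings of twisted conformal blocks recorded in Appendix \ref{appendixtwistedconf}. First I would recast both sides of (\ref{strangedualityatlast}) in conformal-block terms: by Theorem \ref{equivlaszlo}, the bundle $\pi_\ast\calP_r^k$ with its Prym--Hitchin connection is the bundle of twisted conformal blocks for $\widehat{\mathfrak{sl}}_r^{\pm}$ at level $k$; applying the same theorem to $\widehat{\mathfrak{sl}}_k^{\pm}$ at level $r$, and combining it with the Mumford--Welters description of the level-$k$ bundle on the abelian Prym and with the invariant-theoretic description $\pi_\ast\calP_{\Xi,k}^r\cong\bigl(\pi_\ast\calP_{\Xi,1}^k\otimes\pi_\ast\calP_k^r\bigr)^{\cG(\calP_{\Xi,1}^r)}$ from the beginning of this section, the flat projective connection on $\pi_\ast\calP_{\Xi,k}^r$ is likewise realised as one of twisted WZW type.

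Next I would realise $SD$ through the $\Gamma$-equivariant conformal embedding $\mathfrak{sl}_r\oplus\mathfrak{sl}_k\oplus\mathfrak{gl}_1\hookrightarrow\mathfrak{gl}_{rk}$, $(A,B,z)\mapsto A\otimes I_k+I_r\otimes B+zI_{rk}$, the three summands carrying the involutions $\Psi^{\pm}$ and $z\mapsto-z$; the central-charge identity $\frac{k(r^2-1)}{k+r}+\frac{r(k^2-1)}{r+k}+1=rk$ that makes this a conformal embedding is untouched by the twist. By Appendix \ref{appendixtwistedconf} it induces a branching morphism of bundles of twisted conformal blocks, flat for the twisted WZW connections, from the level-one block of $\widehat{\mathfrak{gl}}_{rk}^{\pm}$ to a direct sum of tensor products of blocks for the three factors. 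Under the identification of twisted conformal blocks with non-abelian theta functions (Hong--Kumar \cite{hong.kumar:2018}; Theorem \ref{equivlaszlo} in our setting), and in view of the seesaw identity $t^\ast\calP_{\Xi,rk}\cong\calP_r^k\boxtimes\calP_{\Xi,k}^r$ of Section \ref{level1}, the relevant component of this branching morphism is precisely pull-back along the tensor-product map $t$ of the unique section of $\calP_{\Xi,rk}$, followed by projection onto $\pi_\ast\calP_r^k\otimes\bigl(\pi_\ast\calP_{\Xi,1}^k\otimes\pi_\ast\calP_k^r\bigr)^{\cG(\calP_{\Xi,1}^r)}$, that is to say the co-pairing underlying $SD$.

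To conclude, since $h^0(\mathcal{N}^{even,+,ss}_{\GL_{rk}},\calP_{\Xi,rk})=1$, the source of the branching morphism is a line bundle over $S$, whose projective connection is the (unique, hence flat) trivial one; its canonical generator is therefore a flat projective section and is carried to a flat section of the target. The non-degeneracy of the level-one pairing (Zelaci, \cite[Theorem 6.4]{zelaci:twistconf}; compare Section \ref{prymsdlevel1}) together with Corollary \ref{leveloneflat} identifies the abelian factor of this image, again flatly, with what is required. Composing these flat maps yields flatness of $SD$ for all $r$ and $k$; the $r$ even versus $r$ odd distinction, the parity of $h^0(E\otimes F)$, and the even/odd decomposition of the $\GL_k$-components are dealt with exactly as in Section \ref{level1}.

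The hardest part will be twofold. On the one hand one must set up the twisted conformal embedding carefully enough that the flatness statement for branching morphisms from Appendix \ref{appendixtwistedconf} applies verbatim, which means tracking the Dynkin indices and checking the compatibility of $\Psi^{\pm}$ on $\mathfrak{gl}_{rk}$ with its restriction to the three factors. On the other hand one must verify that this branching morphism genuinely agrees with the geometrically defined $SD$, i.e.\ that the isomorphism between twisted conformal blocks and non-abelian theta functions intertwines the tensor-product and branching structures; this is where the argument actually hinges, and it is the reason an excursion to conformal blocks (rather than staying with theta functions, as in the level-one case) is unavoidable, exactly as Belkale observed in the untwisted situation.
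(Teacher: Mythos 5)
Your overall strategy is the paper's: combine the twisted Laszlo theorem (Theorem \ref{equivlaszlo}), the flatness of branching morphisms for equivariant conformal embeddings (Theorem \ref{equivconfembflat}), and the level-one theta-group argument (Corollary \ref{leveloneflat}), exactly as Belkale does in the untwisted case. The paper's proof does precisely this: it uses the $d\Psi^+$-equivariant conformal embedding $\mathfrak{sl}_r\oplus\mathfrak{sl}_k\hookrightarrow\mathfrak{sl}_{kr}$ to get flatness of $\pi_*\calP_{kr}\rightarrow\pi_*\calP_r^k\otimes\pi_*\calP_k^r$, then runs the tautological section of $\pi_*\calP_{\Xi,kr}$ through a commutative diagram of tensor-product maps and reduces, via $\pi_*\calP^k_r\otimes \pi_*\calP_{\Xi,k}^r\cong \pi_*\calP_r^k\otimes \bigl(\pi_*\calP^r_k\otimes \pi_*\calP^{kr}_{\Xi,1}\bigr)^{\cG(r\Xi)}$, to flatness in the triple tensor product, which follows from the two flat maps just named.

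The one genuine gap is your choice of conformal embedding. You propose $\mathfrak{sl}_r\oplus\mathfrak{sl}_k\oplus\mathfrak{gl}_1\hookrightarrow\mathfrak{gl}_{rk}$ and want to apply Appendix \ref{appendixtwistedconf} to it, but the entire framework there (following Damiolini) is set up for $\mathfrak{q}$ \emph{simple} and $\mathfrak{p}$ \emph{semi-simple}: the Dynkin index, the Segal--Sugawara operators normalised by $c_{\mathfrak{q}}+h^{\vee}_{\mathfrak{q}}$, and Theorem \ref{equivconfembflat} itself are not stated for, and do not immediately extend to, a reductive $\mathfrak{gl}_{rk}$ or an abelian $\mathfrak{gl}_1$ summand (for which one would need a twisted Heisenberg/lattice version of the coset argument, and a twisted abelian analogue of Laszlo's theorem identifying that connection with the Mumford--Welters one). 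Your central-charge identity is arithmetically fine but does not by itself supply this machinery. Note that at the end of your argument you invoke Corollary \ref{leveloneflat} for the abelian factor anyway; once you do that, the $\mathfrak{gl}_1$ summand in the conformal embedding is both unjustified and unnecessary, and you should simply drop it and use $\mathfrak{sl}_r\oplus\mathfrak{sl}_k\hookrightarrow\mathfrak{sl}_{kr}$ as in (\ref{slrk}), handling the abelian level-one part entirely by the theta-group uniqueness argument. With that repair, your identification of the branching morphism with $SD$ (which you rightly flag as the hinge) is carried out in the paper by the explicit commutative diagram of tensor-product maps of moduli spaces rather than by an appeal to the Hong--Kumar isomorphism intertwining branching structures, and you would need to make that comparison explicit rather than assert it.
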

\begin{proof} We begin by observing 
that the map of Lie algebras 
$$\begin{tikzcd}\mathfrak{sl}_r\oplus \mathfrak{sl}_k\ar[r,hookrightarrow]&\mathfrak{sl}_{kr}:A\oplus B\ar[r,mapsto]& A\otimes \operatorname{Id}_{k}+\operatorname{Id}_{r} \otimes B,\end{tikzcd}$$ 
(where $\otimes$ denotes the Kronecker product of matrices), which is a conformal embedding with Dynkin index $(k,r)$ (see \cite{schellekens.warner:1986}), is equivariant for the involution $d\Psi^{+}$.
Therefore, by Theorems \ref{equivconfembflat} and \ref{equivlaszlo}, we have that the morphism between \begin{equation}\label{slrk}\begin{tikzcd}\pi_*\calP_{rk}\ar[r]& \pi_*\calP_r^k \otimes \pi_*\calP_k^r\end{tikzcd}\end{equation} is flat.

We can now consider the following commutative diagram, with all maps induced by tensor products:
$$\begin{tikzcd}
& \mathcal{N}^{+,ss}_{\mathrm{SL}_r} \times \mathcal{N}^{+,ss}_{\mathrm{SL}_k} \times P^{\rm even}_{\calCt/\calC} \ar[dr] \ar[dl] & \\
\mathcal{N}^{+,ss}_{\mathrm{SL}_{kr}}  \times P^{\rm even}_{\calCt/\calC} \ar[dr]& & \mathcal{N}^{+,ss}_{\mathrm{SL}_r} \times \mathcal{N}_{\mathrm{GL}_k}^{{\rm even},+,ss} \ar[dl] \\
& \mathcal{N}_{\mathrm{GL}_{kr}}^{{\rm even},+,ss} & \\
\end{tikzcd}
$$
which gives rise to the commutative diagram of morphisms of sheaves over $S$
$$\begin{tikzcd}
& \pi_*\calP_r^k\otimes \pi_*\calP_k^r\otimes \pi_*\calP_{\Xi,1}^{kr} &
\\
\pi_*\calP_{kr} \otimes \pi_*\calP^{kr}_{\Xi,1} \ar[ur] & &  \pi_*\calP_{r}^k\otimes \pi_*\calP_{\Xi,k}^r\ar[ul]\\
& \pi_*\calP_{\Xi,kr}. \ar[ur]\ar[ul] & \\
\end{tikzcd}
$$
We need to show that the image of the tautological section of $\pi_*\calP_{\Xi,kr}$ in $ \pi_*\calP^k_r\otimes \pi_*\calP_{\Xi,k}^r$ is flat, and since 
$$\pi_*\calP^k_r\otimes \pi_*\calP_{\Xi,k}^r\cong \pi_*\calP_r^k\otimes \left(\pi_*\calP^r_k\otimes \pi_*\calP^{kr}_{
\Xi,1}\right)^{\cG(r\Xi)}$$
it suffices to show that the image in 
$\pi_*\calP^k_r\otimes \pi_*\calP^r_k\otimes \pi_*\calP^{kr}_{\Xi,1}$ is flat. But this follows from Corollary \ref{leveloneflat} 
and the flatness of (\ref{slrk}).
\end{proof}

 \appendix
\section{Twisted conformal embeddings}\label{appendixtwistedconf}
A crucial step in Belkale's proof of flatness of the strange duality morphism \cite{belkale:2009} uses the concept of \emph{conformal embeddings}.  In this appendix we review the WZW connection for twisted conformal blocks, and discuss equivariant conformal embeddings in the twisted context.  We will heavily rely on the material from \cite{belkale:2009, looijenga:2013, damiolini:2020}, to which we refer for the relevant background material.

\subsection{Overview of classical case}
Even though the mathematical version of strange duality is naturally stated in terms of non-abelian theta functions, Belkale used Laszlo's theorem \cite{laszlo:1998} to switch to the setting of bundles of conformal blocks, and then used properties of conformal embeddings there to obtain the flatness of the strange duality morphism.  

Suppose we have an embedding $\mathfrak{p}\subset\mathfrak{q}$ of complex Lie algebras, where $\mathfrak{q}$ is simple, and $\mathfrak{p}$ is semi-simple, with a decomposition in simple summands $\mathfrak{p}=\oplus_{i=1}^k \mathfrak{p}_i$.  Associated with this is a \emph{Dynkin index} $d=(d_1,\dots,d_k)\in \mathbb{Z}_{>0}^k$.  If we denote the associated affine Lie algebras as $\widehat{\mathfrak{q}}$ and $\widehat{\mathfrak{p}}_i$, then we have an induced map $$\widehat{\mathfrak{p}}=\oplus_{i=1}^k \widehat{\mathfrak{p}}_i\rightarrow \widehat{\mathfrak{q}},$$ such that the generating central element $c_i$ of $\widehat{\mathfrak{p}}_i$ gets mapped to $d_i$ times the generating central element of $\widehat{\mathfrak{p}}$.

The embedding $\mathfrak{p}\subset\mathfrak{q}$ is said to be \emph{conformal} if $a_{\widehat{\mathfrak{q}}}=a_{\widehat{\mathfrak{p}}},$ where
$$a_{\widehat{\mathfrak{q}}}=\frac{\dim \mathfrak{q}}{h^{\vee}_{\mathfrak{q}}+1}$$ is the central charge of $\widehat{\mathfrak{q}}$ at level 1 (the only level at which this can occur), and $$a_{\widehat{\mathfrak{p}}}=\sum_{i=1}^k \frac{d_i \dim \mathfrak{p}_i}{h^{\vee}_{\mathfrak{p}_i}+d_i}$$ is the sum of the central charges of the $\mathfrak{p}_i$ at level $d_i$ (here $h^{\vee}_{\mathfrak{q}}$ and $h^{\vee}_{\mathfrak{p}_i}$ are the respective dual Coxeter numbers).  The relevance of this is that it ensures compatibility of the corresponding Segal-Sugawara constructions (see \cite[Section 5.2]{belkale:2009}).  In particular, we can consider the so-called co-set representation of the two Segal-Sugawara constructions, which is a representation of the Virasoro algebra which will have, if the embedding is conformal, trivial central charge.  This guarantees that the whole coset representation is trivial, which is equivalent to saying that
the diagram
 \begin{equation}\label{segsugcomp}
\begin{tikzcd} 
& &\overline{U}\widehat{\mathfrak{p}}\Bigg[\frac{1}{c_{\mathfrak{p}_i}+h^{\vee}_{\mathfrak{p}_i}}\Bigg]^{\operatorname{Aut}(\mathfrak{p})}
\ar[dd]
\\ 
\operatorname{Vir}
\ar[urr,"T_{\mathfrak{p}}"]\ar[drr,"T_{\mathfrak{q}}" ']& 
\\ 
& &  \overline{U}\widehat{\mathfrak{q}}\Bigg[\frac{1}{c_{\mathfrak{q}}+h^{\vee}_{\mathfrak{q}}}\Bigg]^{\operatorname{Aut}(\mathfrak{q})}
\end{tikzcd}\end{equation}
commutes.  Here $\operatorname{Vir}$ is the Virasoro algebra, $\overline{U}\widehat{\mathfrak{p}}$ and $\overline{U}\widehat{\mathfrak{q}}$ are the suitable completions of the universal enveloping algebras of $\widehat{\mathfrak{p}}$ and $\widehat{\mathfrak{q}}$, $c_{\mathfrak{p}_i}$ and $c_{\mathfrak{q}}$ are the central generators of $\widehat{\mathfrak{p}}_i$ and $\widehat{\mathfrak{q}}$, $T_{\mathfrak{p}}$ and $T_{\mathfrak{q}}$ are the Segal-Sugawara morphisms, and the vertical arrow is naturally induced by the inclusion $\mathfrak{p}\subset\mathfrak{q}$.

The Segal-Sugawara construction is the crucial ingredient in the WZW/TUY connection on bundles on conformal blocks, and the main corollary of  (\ref{segsugcomp}) is

\begin{theorem}[{\cite[Proposition 5.8]{belkale:2009}}]
If $\mathfrak{p}\subset\mathfrak{q}$ is a conformal embedding of Lie algebras (with $\mathfrak{q}$ simple, and $\mathfrak{p}$ semi-simple as before), then the induced map between the projective bundles of conformal blocks is flat.
\end{theorem}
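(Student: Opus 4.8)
The plan is to reconstruct Belkale's argument: produce the natural map $\Phi$ between the two bundles of conformal blocks coming from the branching rule of the conformal embedding, and show directly that it intertwines the two WZW/TUY connections up to a scalar one-form. Recall that the bundle of conformal blocks $\mathbb{V}_{\mathfrak{q},1}$ — at level one, the only level at which $\mathfrak{q}$ occurs conformally — is the sheaf of covacua of the integrable level-one $\widehat{\mathfrak{q}}$-module $V_{\mathfrak{q}}$ over the base $S$ (a family of pointed curves with formal coordinates), and likewise $\mathbb{V}_{\mathfrak{p},\vec{d}}^{\vec\mu}$ is built from the level-$\vec{d}$ modules $V_{\mathfrak{p}}^{\vec\mu}$. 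The homomorphism $\widehat{\mathfrak{p}}\to\widehat{\mathfrak{q}}$ (under which $c_i\mapsto d_i c_{\mathfrak{q}}$) turns $V_{\mathfrak{q}}$ into a $\widehat{\mathfrak{p}}$-module, and integrability yields a branching decomposition $V_{\mathfrak{q}}\cong\bigoplus_{\vec\mu}V_{\mathfrak{p}}^{\vec\mu}\otimes M_{\vec\mu}$ of $\widehat{\mathfrak{p}}$-modules. This descends to a morphism $\Phi$ of the associated bundles of covacua over $S$. Finally, both connections have, locally on $S$, the schematic shape $\nabla_X[v]=[\partial_X v+T(X)v]$, where $\partial_X$ differentiates in the family and $T(X)$ is the Segal--Sugawara operator of a vector field on the punctured curve lifting $X\in T_S$; this descends to covacua precisely because the Segal--Sugawara operators normalise the relevant ``out'' Lie algebras.

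The heart of the matter is the commutativity of \eqref{segsugcomp}. Evaluating it on a Witt generator $D_j=-z^{j+1}\frac{d}{dz}$ in a formal coordinate $z$, and then extending by linearity and continuity to an arbitrary vector field $X$, it says that $T_{\mathfrak{q}}(X)$ is the image of $T_{\mathfrak{p}}(X)$ under the homomorphism of (appropriately localised and completed) universal enveloping algebras $\overline{U}\widehat{\mathfrak{p}}\to\overline{U}\widehat{\mathfrak{q}}$ induced by $\mathfrak{p}\hookrightarrow\mathfrak{q}$. Hence on $V_{\mathfrak{q}}$ the operator $T_{\mathfrak{q}}(X)$ acts, through the $\widehat{\mathfrak{p}}$-module structure, exactly as $T_{\mathfrak{p}}(X)$ acts on each summand $V_{\mathfrak{p}}^{\vec\mu}$. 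Since $\Phi$ is built from a $\widehat{\mathfrak{p}}$-module isomorphism and varies holomorphically with $S$ — so $\Phi\circ\partial_X=\partial_X\circ\Phi$ — we obtain
\[
\Phi\bigl(\nabla^{\mathfrak{q}}_X[v]\bigr)=\bigl[\partial_X\Phi(v)+\Phi\bigl(T_{\mathfrak{q}}(X)v\bigr)\bigr]=\bigl[\partial_X\Phi(v)+T_{\mathfrak{p}}(X)\Phi(v)\bigr]=\nabla^{\mathfrak{p}}_X\bigl(\Phi[v]\bigr),
\]
up to adding a scalar multiple of $\Phi[v]$ coming from the normalisation (conformal-anomaly) term of the Segal--Sugawara construction. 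That scalar is precisely the discrepancy a flat morphism of \emph{projective} bundles is permitted; moreover the equality $a_{\widehat{\mathfrak{q}}}=a_{\widehat{\mathfrak{p}}}$ — equivalently, the triviality of the coset Virasoro representation, which is what makes \eqref{segsugcomp} commute — forces it to vanish, so in fact $\Phi$ is flat already at the linear level.

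The main obstacle is the bookkeeping needed to make this precise. One must check that the branching decomposition of $V_{\mathfrak{q}}$ is compatible with the Lie algebras of functions and forms on the punctured curve, so that $\Phi$ genuinely descends to the bundles of covacua; that $\Phi$ is $\mathcal{O}_S$-linear and varies holomorphically, so that the ``$\partial_X$'' parts really commute with it; and — this is where conformality actually enters — that the completions $\overline{U}\widehat{\mathfrak{p}}$, $\overline{U}\widehat{\mathfrak{q}}$ and the Segal--Sugawara morphisms into them are set up so that \eqref{segsugcomp} is both meaningful and true, i.e.\ proving the vanishing of the central charge of the coset construction. All of this is carried out in \cite[\S 5]{belkale:2009}; in the twisted setting needed elsewhere in this paper one replaces $\overline{U}\widehat{\mathfrak{p}}$, $\overline{U}\widehat{\mathfrak{q}}$ and the Segal--Sugawara maps by their $\Gamma$-equivariant counterparts following \cite{damiolini:2020}, after which the same argument applies verbatim.
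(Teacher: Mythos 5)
Your proposal is correct and follows essentially the same route as the paper (which itself defers to \cite[\S 5]{belkale:2009} and reuses the argument for the twisted analogue in Theorem \ref{equivconfembflat}): the conformal condition forces the coset Virasoro representation to have zero central charge and hence to be trivial, which is exactly the commutativity of \eqref{segsugcomp}, and this commutativity intertwines the two Segal--Sugawara heat terms through the module map induced by the branching/vacuum-module homomorphism. The only cosmetic caveat is that $a_{\widehat{\mathfrak{q}}}=a_{\widehat{\mathfrak{p}}}$ is not literally \emph{equivalent} to triviality of the coset representation --- one also needs integrability/unitarizability of the level-one $\widehat{\mathfrak{q}}$-module, as in \cite[Proposition 3.2(c)]{kac.wakimoto:1988} --- but you invoke exactly the right mechanism.
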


\subsection{Twisted conformal blocks}\label{twistconfoverview}
We now want to indicate how this result also holds for bundles of twisted conformal blocks.  As in Section \ref{twistedlaszlo}, we will use Damiolini's approach to the latter \cite{damiolini:2020}, which requires the Galois group $\Gamma$ to be cyclic of prime order.  Though the result is no doubt true in greater generality, this will suffice for our purposes.   
So from now on the Lie algebras $\mathfrak{p}_i$ and $\mathfrak{q}$ (respectively semi-simple and simple, as before) will come equipped with the action of a finite cyclic group $\Gamma$ of prime order, such that the embedding $\mathfrak{p}\subset\mathfrak{q}$ is $\Gamma$-equivariant.  

In order to define the bundles of conformal blocks, the choice of (at least) one marked point is required.  We will denote by 
$\mathcal{H}ur(\Gamma)_{g,1}$ the Hurwitz stack of \'etale Galois covers $\calCt\rightarrow \calC$ with Galois group $\Gamma$, equipped with the choice of a marked point on $\calC$.  
(In \cite{damiolini:2020} also ramication, and nodal curves, are considered, but this will not be a concern for us.)  Given a level $\ell \in \mathbb{Z}_{>0}$, 
the bundle of conformal blocks  $\mathbb{V}_{\mathfrak{h},\ell, \calCt\rightarrow \calC}^{\dagger}$, is constructed by Damiolini 
on $\mathcal{H}ur(\Gamma)_{g,1}$ (we will only need the case corresponding to the trivial representations of $\mathfrak{p}$ and $\mathfrak{q})$. Similar to \cite[Section 5.1]{belkale:2009}, this construction naturally generalizes to our equivariant semi-simple case, to give bundles 
$\mathbb{V}_{\mathfrak{p},(\ell_1,\dots,\ell_k), \calCt\rightarrow \calC}^{\dagger}$.

If $P$ and $Q$ are the simply-connected semi-simple groups corresponding to the Lie algebras $\mathfrak{p}$ and $\mathfrak{q}$ (which also carry an action of $\Gamma$), we can consider the parahoric Bruhat-Tits group schemes $\mathcal{P}$ and $\mathcal{Q}$ associated to this data as invariants of the Weil restriction of the constant group schemes on $\calCt$ (cfr. \cite[p. 500]{heinloth:2010}). We will denote the associated relative moduli stacks of torsors as $\pi^{\mathcal{P}}:\mathcal{B}un_{\mathcal{P}}\rightarrow \mathcal{H}ur(\Gamma)_{g,1}$ and $\pi^{\mathcal{Q}}:\mathcal{B}un_{\mathcal{Q}}\rightarrow \mathcal{H}ur(\Gamma)_{g,1}$ (no choice of marked point is needed here, so these are actually pull-backs from the forgetful morphism to the unmarked Hurwitz stacks). Remark that we have a natural morphism of group schemes $\mathcal{P}\rightarrow\mathcal{Q}$, and hence extension of structure group gives a morphism $\mathcal{B}un_{\mathcal{P}}\rightarrow \mathcal{B}un_{\mathcal{Q}}$.  The relative Picard group of $\mathcal{B}un_{\mathcal{Q}}$ is cyclic with generator $\mathcal{L}$ \cite{heinloth:2010}, and given $\ell\in \mathbb{Z}_{>0}$, the line bundle $\mathcal{L}^{\otimes\ell}$ pulls back to a line bundle $\mathcal{L}_{(\ell d_1,\dots,\ell d_k)}$ on $\mathcal{B}un_{\mathcal{P}}$.

The construction of the bundles of twisted conformal blocks on $\mathcal{H}ur_{g,1}$ goes in the following steps (we referred to \cite[Section 3]{damiolini:2020} for details): 
\begin{enumerate}[label=(\roman*)]
\item Take the associated bundles of Lie algebras for $\mathcal{P}$  and $\mathcal{Q}$, which we shall denote respectively by $\mathfrak{h}^{\mathcal{P}}$ and $\mathfrak{h}^{\mathcal{Q}}$, and restrict these, firstly to the punctured curve, to obtain bundles $\mathfrak{h}^{\mathcal{P}}_{\mathcal{A}}$ and $\mathfrak{h}^{\mathcal{Q}}_{\mathcal{A}}$, and secondly to the punctured formal neighbourhood $\mathbb{D}^{\degree}\rightarrow S$ of the marked point, to obtain bundles $\mathfrak{h}^{\mathcal{P}}_{\mathbb{D}^{\degree}}$ and $\mathfrak{h}^{\mathcal{Q}}_{\mathbb{D}^{\degree}}$ (at a point these will look like the loop algebras of $\mathfrak{p}$ and $\mathfrak{q}$).
\item There are canonical central extensions of the latter, denoted by respectively $\widehat{\mathfrak{h}}^{\mathcal{P}}_{\mathbb{D}^{\degree}}$ and $\widehat{\mathfrak{h}}^{\mathcal{Q}}_{\mathbb{D}^{\degree}}$, with the rank of the center of the former given by $k$.  By the residue theorem, $\mathfrak{h}^{\mathcal{P}}_{\mathcal{A}}$ and $\mathfrak{h}^{\mathcal{Q}}_{\mathcal{A}}$ are naturally sub-bundles of $\widehat{\mathfrak{h}}^{\mathcal{P}}_{\mathbb{D}^{\degree}}$ and $\widehat{\mathfrak{h}}^{\mathcal{Q}}_{\mathbb{D}^{\degree}}$.
\item\label{vermquot} Given $\ell\in \mathbb{Z}_{>0}$, or $(\ell_1, \dots, \ell_k)\in \mathbb{Z}^k_{>0}$, define the Verma modules of that level, as $$\widetilde{\mathcal{H}}^{\mathcal{Q}}_{\ell} = U\widehat{\mathfrak{h}}_{\mathbb{D}^{\degree}}^{\mathcal{Q}}\Big/ \left(U\widehat{\mathfrak{h}}_{\mathbb{D}^{\degree}}^{\mathcal{Q}}\circ F^0\mathfrak{h}_{\mathbb{D}^{\degree}}^{\mathcal{Q}}, c=\ell\right)$$
and 
$$\widetilde{\mathcal{H}}^{\mathcal{P}}_{(\ell_1,\dots,\ell_k)}=U\widehat{\mathfrak{h}}_{\mathbb{D}^{\degree}}^{\mathcal{P}}\Big/ \left(U\widehat{\mathfrak{h}}_{\mathbb{D}^{\degree}}^{\mathcal{P}}\circ F^0\mathfrak{h}_{\mathbb{D}^{\degree}}^{\mathcal{P}}, c_i=\ell_i\right).$$  Here $F^0\mathfrak{h}_{\mathbb{D}^{\degree}}^{\mathcal{P}}$ and $F^0\mathfrak{h}_{\mathbb{D}^{\degree}}^{\mathcal{Q}}$ denote the subsheaves of sections that extend to the formal disk.  Both of $\widetilde{\mathcal{H}}^{\mathcal{Q}}_{\ell}$ and $\widetilde{\mathcal{H}}^{\mathcal{P}}_{(\ell_1,\dots, \ell_k)}$ have unique maximal proper submodules, and the respective quotients are denoted by ${\mathcal{H}}^{\mathcal{Q}}_{\ell}$ and 
${\mathcal{H}}^{\mathcal{P}}_{(\ell_1, \dots, \ell_k)}.$ 
\item Finally the bundles of covacua are defined as $$\mathbb{V}_{\mathfrak{h}^{\mathcal{Q}},\ell, \calCt\rightarrow \calC}=\left(\mathfrak{h}_{\mathcal{A}}^{\mathcal{Q}}\circ {\mathcal{H}}^{\mathcal{Q}}_{\ell}\right)\Big\backslash {\mathcal{H}}^{\mathcal{Q}}_{\ell}$$
and $$\mathbb{V}_{\mathfrak{h}^{\mathcal{P}},(\ell_1,\dots,\ell_k), \calCt\rightarrow \calC}=\left(\mathfrak{h}_{\mathcal{A}}^{\mathcal{P}}\circ {\mathcal{H}}^{\mathcal{P}}_{(\ell_1,\dots,\ell_k)}\right)\Big\backslash {\mathcal{H}}^{\mathcal{P}}_{(\ell_1,\dots,\ell_k)},$$
and the bundles of conformal blocks $\mathbb{V}_{\mathfrak{h}^{\mathcal{Q}},\ell, \calCt\rightarrow \calC}^{\dagger}$ and $\mathbb{V}_{\mathfrak{h}^{\mathcal{P}},(\ell_1,\dots,\ell_k), \calCt\rightarrow \calC}^{\dagger}$ are their duals.
\end{enumerate}

We firstly observe (without requiring the embedding $\mathfrak{p}\subset\mathfrak{q}$ to be conformal) that we obtain a natural morphism between the bundles of twisted conformal blocks.
\begin{proposition}
In the setting above, there is a natural diagram 
\begin{equation*}
\begin{tikzcd}
\pi^{\mathcal{P}}_*\mathcal{L}^{\otimes\ell}\ar[d]\ar[r, "\cong"] & \mathbb{V}_{\mathfrak{h}^{\mathcal{P}},\ell, \calCt\rightarrow \calC}^{\dagger} \ar[d]\\
\pi^{\mathcal{Q}}_*\mathcal{L}_{(\ell d_1,\dots, \ell d_k)}\ar[r, "\cong" '] & 
\mathbb{V}_{\mathfrak{h}^{\mathcal{Q}},(\ell d_1,\dots,\ell d_k), \calCt\rightarrow \calC}^{\dagger}
\end{tikzcd}
\end{equation*}
that commutes up to $\mathcal{O}_S^{\times}$.
\end{proposition}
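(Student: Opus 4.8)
The plan is to deduce both horizontal isomorphisms and the commutativity of the square in one stroke, by transporting everything to the twisted affine Grassmannian via Heinloth's uniformization and the affine Borel--Weil theorem, exactly as in Section~\ref{twistedlaszlo} (in the untwisted case this compatibility is essentially \cite[\S 5.1]{belkale:2009}). For $\mathcal{Q}$ (simple), the identification $\pi^{\mathcal{Q}}_*\mathcal{L}^{\otimes\ell}\cong\mathbb{V}^{\dagger}_{\mathfrak{h}^{\mathcal{Q}},\ell,\calCt\to\calC}$ is the one already used there: uniformization gives $\mathcal{B}un_{\mathcal{Q}}\cong\mathfrak{Q}_{\mathcal{Q}}/\mathcal{Q}(\calC^{\degree})$, affine Borel--Weil gives $\pi^{\mathfrak{Q}_{\mathcal{Q}}}_*(L^{\otimes\ell})\cong\mathcal{H}^{\mathcal{Q}}_\ell$, and descent along the $\mathcal{Q}(\calC^{\degree})$-action produces the conformal-block bundle. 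For $\mathcal{P}=\prod_{i=1}^{k}\mathcal{P}_i$ semi-simple I would run the identical argument factor by factor, using $\mathcal{B}un_{\mathcal{P}}\cong\prod_i\mathcal{B}un_{\mathcal{P}_i}$, $\mathfrak{Q}_{\mathcal{P}}\cong\prod_i\mathfrak{Q}_{\mathcal{P}_i}$ and the factorisation $\mathcal{H}^{\mathcal{P}}_{(\ell d_1,\dots,\ell d_k)}\cong\mathcal{H}^{\mathcal{P}_1}_{\ell d_1}\boxtimes\cdots\boxtimes\mathcal{H}^{\mathcal{P}_k}_{\ell d_k}$, to obtain $\pi^{\mathcal{P}}_*\mathcal{L}_{(\ell d_1,\dots,\ell d_k)}\cong\mathbb{V}^{\dagger}_{\mathfrak{h}^{\mathcal{P}},(\ell d_1,\dots,\ell d_k),\calCt\to\calC}$. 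None of this uses conformality of $\mathfrak{p}\subset\mathfrak{q}$.

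Next I would pin down the two vertical maps. The $\mathcal{B}un$-side one is induced by extension of structure group along $\mathcal{P}\to\mathcal{Q}$, i.e.\ by a morphism $e:\mathcal{B}un_{\mathcal{P}}\to\mathcal{B}un_{\mathcal{Q}}$ over $\mathcal{H}ur(\Gamma)_{g,1}$; since $e^{\ast}\mathcal{L}^{\otimes\ell}\cong\mathcal{L}_{(\ell d_1,\dots,\ell d_k)}$ by the Dynkin-index relation recalled above, pullback of sections along $e$ is the desired map of direct images. Via uniformization $e$ lifts to a morphism of twisted affine Grassmannians $\widetilde{e}:\mathfrak{Q}_{\mathcal{P}}\to\mathfrak{Q}_{\mathcal{Q}}$, equivariant for $\mathcal{P}(\calC^{\degree})\to\mathcal{Q}(\calC^{\degree})$ and with $\widetilde{e}^{\ast}L^{\otimes\ell}\cong L_{\mathcal{P},(\ell d_1,\dots,\ell d_k)}$. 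The conformal-block-side map I would build from the fact that, restricted along $\widehat{\mathfrak{h}}^{\mathcal{P}}_{\mathbb{D}^{\degree}}\to\widehat{\mathfrak{h}}^{\mathcal{Q}}_{\mathbb{D}^{\degree}}$, the module $\mathcal{H}^{\mathcal{Q}}_\ell$ acquires level $(\ell d_1,\dots,\ell d_k)$ and its vacuum vector generates a copy of the irreducible $\mathcal{H}^{\mathcal{P}}_{(\ell d_1,\dots,\ell d_k)}$; this gives a $\widehat{\mathfrak{h}}^{\mathcal{P}}_{\mathbb{D}^{\degree}}$-linear injection $\mathcal{H}^{\mathcal{P}}_{(\ell d_1,\dots,\ell d_k)}\hookrightarrow\mathcal{H}^{\mathcal{Q}}_\ell$, unique up to scalar, which is $\mathfrak{h}^{\mathcal{P}}_{\mathcal{A}}$-equivariant, hence descends to a map of covacua $\mathbb{V}_{\mathfrak{h}^{\mathcal{P}},(\ell d_1,\dots,\ell d_k)}\to\mathbb{V}_{\mathfrak{h}^{\mathcal{Q}},\ell}$; dualising yields the map of conformal blocks.

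Commutativity would then reduce to a single compatibility: under the affine Borel--Weil identification, $\widetilde{e}^{\ast}$ on global sections of the relevant line bundles is, up to a unit of $\mathcal{O}_S$, the transpose of the vacuum-module inclusion $\mathcal{H}^{\mathcal{P}}_{(\ell d_1,\dots,\ell d_k)}\hookrightarrow\mathcal{H}^{\mathcal{Q}}_\ell$. The soft reason for this is that both maps are morphisms of highest-weight $\widehat{\mathfrak{h}}^{\mathcal{P}}_{\mathbb{D}^{\degree}}$-modules between the same pair of (duals of) irreducibles, hence coincide up to scalar, and the scalar is a unit because both are nonzero on the canonical generator, as one checks by evaluating on the highest-weight vector. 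Granting this, passing to $\mathcal{Q}(\calC^{\degree})$- resp.\ $\mathcal{P}(\calC^{\degree})$-invariants --- equivalently $\mathfrak{h}^{\mathcal{Q}}_{\mathcal{A}}$- resp.\ $\mathfrak{h}^{\mathcal{P}}_{\mathcal{A}}$-coinvariants on the dual side --- identifies the two composites around the square up to $\mathcal{O}_S^{\times}$. The main obstacle is precisely making this compatibility rigorous in Damiolini's twisted/parahoric framework: one must check that Heinloth's uniformization and affine Borel--Weil are functorial enough under extension of structure group that $\widetilde{e}^{\ast}$ on sections really is dual to restriction of modules, and that this persists in families over $\mathcal{H}ur(\Gamma)_{g,1}$; everything else is formal functoriality. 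That relative statement I would finally reduce to the absolute one by working \'etale-locally on the base, using the quasi-sections guaranteed by \cite{heinloth:2010} as in Section~\ref{twistedlaszlo}; since the claim is only asserted up to $\mathcal{O}_S^{\times}$, the local diagrams glue with no further work.
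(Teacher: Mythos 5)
Your proposal is correct and follows essentially the same route as the paper: the horizontal identifications come from the twisted uniformization/affine Borel--Weil theorem of Hong--Kumar, the vertical arrows from extension of structure group along $\mathcal{P}\to\mathcal{Q}$ on one side and the induced map of highest-weight $\widehat{\mathfrak{h}}^{\mathcal{P}}_{\mathbb{D}^{\degree}}$-modules on the other, and commutativity from the rigidity (uniqueness up to scalar) of morphisms between these irreducible modules, exactly as in Belkale's untwisted argument. The paper's own proof is a compressed version of this, delegating the details to \cite[Theorem 12.1]{hong.kumar:2018} and \cite[Proposition 5.2]{belkale:2009}; your write-up supplies precisely those details.
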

Here the horizontal morphisms are given by the correspondence between non-abelian theta functions and twisted conformal blocks \cite[Theorem 12.1]{hong.kumar:2018}.  We will work throughout with families of $\Gamma$-covers $\calCt\rightarrow \calC\rightarrow S$ as above, parametrised by a scheme $S$.
\begin{proof} 
To obtain the right vertical arrow, it suffices to remark that the $\Gamma$-equivariance of $\mathfrak{p}\subset \mathfrak{q}$ gives natural morphisms $\mathcal{P}\rightarrow \mathcal{Q}$, $\mathfrak{h}^{\mathcal{P}}\rightarrow \mathfrak{h}^{\mathcal{Q}}$, $\mathfrak{h}^{\mathcal{P}}_{\mathcal{A}} \rightarrow \mathfrak{h}^{\mathcal{Q}}_{\mathcal{A}}$, $\mathfrak{h}^{\mathcal{P}}_{\mathbb{D}^{\degree}} \rightarrow\mathfrak{h}^{\mathcal{Q}}_{\mathbb{D}^{\degree}}$; as well as (by scaling the central generators using the Dynkin index $d$) $\widehat{\mathfrak{h}}^{\mathcal{P}}_{\mathbb{D}^{\degree}} \rightarrow\widehat{\mathfrak{h}}^{\mathcal{Q}}_{\mathbb{D}^{\degree}}$.  These in turn induce morphisms
$ \widetilde{\mathcal{H}}^{\mathcal{P}}_{(\ell_1,\dots,\ell_k)}  \rightarrow \widetilde{\mathcal{H}}^{\mathcal{Q}}_{\ell}$, $ {\mathcal{H}}^{\mathcal{P}}_{(\ell_1,\dots,\ell_k)}\rightarrow{\mathcal{H}}^{\mathcal{Q}}_{\ell}$ and $\mathbb{V}_{\mathfrak{h}^{\mathcal{P}},(\ell_1,\dots,\ell_k), \calCt\rightarrow \calC}\rightarrow \mathbb{V}_{\mathfrak{h}^{\mathcal{Q}},\ell, \calCt\rightarrow \calC}$.
Using \cite[Theorem 12.1]{hong.kumar:2018}, the rest of the statement follows similarly to \cite[Proposition 5.2]{belkale:2009}.
\end{proof}

\subsection{Overview of construction of twisted WZW connection}\label{twistedWZWoverview}
Following Looijenga's approach in the non-twisted case \cite{looijenga:2013}, the construction of the twisted WZW connection by Damiolini \cite{damiolini:2020} (and extended to the semi-simple case) proceeds as follows:
\begin{enumerate}[label=(\roman*)]
\item  If $\theta_{\mathbb{D}^{\degree}/S}$ denotes the sheaf of Lie algebras on $S$ given as vertical vector fields on the formal punctured neighbourhood on $C$, there exists a canonical central extension
$$\begin{tikzcd}
0\ar[r] & \hbar\mathcal{O}_S\ar[r] & \widehat{\theta}_{\mathbb{D}^{\degree}/S}\ar[r] & \theta_{\mathbb{D}^{\degree}/S}\ar[r] & 0,\end{tikzcd}$$ referred to as the Virasoro algebra over $S$.  There is also a canonical short exact sequence
$$\begin{tikzcd}0\ar[r] & \theta_{\mathbb{D}^{\degree}/S}\ar[r]& \theta_{\mathbb{D}^{\degree},S}\ar[r]& T_S\ar[r]& 0,
\end{tikzcd}$$
where $\theta_{\mathbb{D}^{\degree},S}$ is the sheaf of vector fields on all of the formal punctured neighbourhood on $C$.

\item \label{twistedsegsug} The twisted Segal-Sugawara construction now gives \cite[Proposition 4.17]{damiolini:2020} morphisms \begin{equation}\label{segsugtwist}
T_{\mathfrak{h}^\mathcal{P}}:\widehat{\theta}_{\mathbb{D}^{\degree}/S}\rightarrow \overline{U}\widehat{\mathfrak{h}}^{\mathcal{P}}_{\mathbb{D}^{\degree}}\Bigg[\frac{1}{c_{\mathfrak{p}_i}+h^{\vee}_{\mathfrak{p}_i}}\Bigg]\hspace{.5cm}
\text{and} 
\hspace{.5cm} T_{\mathfrak{h}^\mathcal{Q}}:\widehat{\theta}_{\mathbb{D}^{\degree}/S}\rightarrow \overline{U}\widehat{\mathfrak{h}}^{\mathcal{Q}}_{\mathbb{D}^{\degree}}\Bigg[\frac{1}{c_{\mathfrak{q}}+h^{\vee}_{\mathfrak{q}}}\Bigg],\end{equation} where $\overline{U}$ denotes the completion of the universal enveloping algebras of $\widehat{\mathfrak{h}}^{\mathcal{P}}_{\mathbb{D}^{\degree}}$ and $\widehat{\mathfrak{h}}^{\mathcal{Q}}_{\mathbb{D}^{\degree}}$ with respect to suitable filtration $F^*\widehat{\mathfrak{h}}^{\mathcal{P}}_{\mathbb{D}^{\degree}}$ and $F^*\widehat{\mathfrak{h}}^{\mathcal{Q}}_{\mathbb{D}^{\degree}}$.

\item \label{segsugdescent} The morphisms $T_{\mathfrak{h}^\mathcal{P}}$ and $T_{\mathfrak{h}^\mathcal{Q}}$ induce
morphisms (by abuse of notation denoted in the same way)
\begin{equation*}
T_{\mathfrak{h}^\mathcal{P}}:\widehat{\theta}_{\mathbb{D}^{\degree}/S}\rightarrow 
\operatorname{End}\left(\mathcal{F}^+\left(\mathfrak{h}^{\mathcal{P}}_{\mathbb{D}^{\degree}}\right)\right)
\hspace{.5cm}
\text{and} 
\hspace{.5cm} T_{\mathfrak{h}^\mathcal{Q}}:\widehat{\theta}_{\mathbb{D}^{\degree}/S}\rightarrow \operatorname{End}\left(\mathcal{F}^+\left(\mathfrak{h}^{\mathcal{Q}}_{\mathbb{D}^{\degree}}\right)\right),\end{equation*}
where $$\mathcal{F}^+\left(\mathfrak{h}^{\mathcal{Q}}_{\mathbb{D}^{\degree}}\right)=\left(\overline{U}\widehat{\mathfrak{h}}^{\mathcal{Q}}_{\mathbb{D}^{\degree}}\Big/ \overline{U}\widehat{\mathfrak{h}}^{\mathcal{Q}}_{\mathbb{D}^{\degree}}\circ F^1\widehat{\mathfrak{h}}^{\mathcal{Q}}_{\mathbb{D}^{\degree}}\right)\Bigg[\frac{1}{c_{\mathfrak{q}}+h^{\vee}_{\mathfrak{q}}}\Bigg]$$
and 
$$\mathcal{F}^+\left(\mathfrak{h}^{\mathcal{P}}_{\mathbb{D}^{\degree}}\right)=\left(\overline{U}\widehat{\mathfrak{h}}^{\mathcal{P}}_{\mathbb{D}^{\degree}}\Big/ \overline{U}\widehat{\mathfrak{h}}^{\mathcal{P}}_{\mathbb{D}^{\degree}}\circ F^1\widehat{\mathfrak{h}}^{\mathcal{P}}_{\mathbb{D}^{\degree}}\right)\Bigg[\frac{1}{c_{\mathfrak{p}_i}+h^{\vee}_{\mathfrak{p}_i}}\Bigg].$$

\item\label{segsugproj} The projectivizations of $T_{\mathfrak{h}^\mathcal{P}}$ and  $T_{\mathfrak{h}^\mathcal{Q}}$ combine with the subsheaf $F^0\theta_{\mathbb{D}^{\degree},S}\subset\theta_{\mathbb{D}^{\degree},S}$ (acting via coefficient-wise derivation) to give morphisms of Lie algebras $$\begin{tikzcd}\mathbb{P}T_{\mathfrak{h}^{\mathcal{P}},S}:\theta_{\mathbb{D}^{\degree},S}\ar[r]& \mathcal{D}^{(1)}_S\left(\mathcal{H}_{(\ell_1,\dots,\ell_k)}^{\mathcal{P}}\right)\Big/\mathcal{O}_S \end{tikzcd}$$ and $$\begin{tikzcd}\mathbb{P}T_{\mathfrak{h}^{\mathcal{Q}},S}:\theta_{\mathbb{D}^{\degree},S}\ar[r]& \mathcal{D}^{(1)}_S\left(\mathcal{H}_{\ell}^{\mathcal{Q}}\right)\Big/\mathcal{O}_S.\end{tikzcd}$$

\item\label{finalquot} In turn, $\mathbb{P}T_{\mathfrak{h}^{\mathcal{P}}}$ and $\mathbb{P}T_{\mathfrak{h}^{\mathcal{Q}}}$ induce the flat projective connections on the bundles of covacua (\cite[Lemma 4.22]{damiolini:2020})
$$\begin{tikzcd}\nabla^{\mathcal{P}}:T_S\ar[r]& \mathcal{A}
\left(\mathbb{V}_{\mathfrak{h}^{\mathcal{P}},(\ell_1,\dots,\ell_k), \calCt\rightarrow \calC}\right)\Big/\mathcal{O}_S \end{tikzcd}$$ 
and 
$$\begin{tikzcd}\nabla^{\mathcal{Q}}:T_S\ar[r]& \mathcal{A}
\left(\mathbb{V}_{\mathfrak{h}^{\mathcal{Q}},\ell, \calCt\rightarrow \calC}\right)\Big/\mathcal{O}_S.\end{tikzcd}$$

\end{enumerate}
\begin{remark}
Though we have no need to do so, it is possible to avoid going to projectivizations, to obtain a \emph{$\lambda$-flat connection} (in the terminology of \cite{looijenga:2013}), also known as a \emph{twisted $\mathcal{D}$-module}, as is done in \cite[\S 12]{deshpande.mukhopadhyay:2023}.  Twisted $\mathcal{D}$-modules induce flat projective connections, but carry a bit more information.
\end{remark}

\subsection{Conformal embeddings and twisted flatness}
We will now further require the equivariant embedding $\mathfrak{p}\subset\mathfrak{q}$ to be conformal as before (and hence put $\ell=1$), and consider the flat projective WZW connections on $\mathbb{V}_{\mathfrak{q},\ell, \calCt\rightarrow \calC}^{\dagger}$ and $\mathbb{V}_{\mathfrak{p},(\ell d_1,\dots,\ell d_k), \calCt\rightarrow \calC}^{\dagger}$ as constructed in \cite[Section 4]{damiolini:2020}.

\begin{theorem}\label{equivconfembflat}If the $\Gamma$-equivariant embedding $\mathfrak{p}\rightarrow \mathfrak{q}$ is conformal, the natural morphism between the projective bundles of twisted conformal blocks  $\mathbb{V}_{\mathfrak{q},1, \calCt\rightarrow \calC}^{\dagger}$ and \linebreak $\mathbb{V}_{\mathfrak{p},(d_1,\dots,d_k), \calCt\rightarrow \calC}^{\dagger}$ preserves the projective connections.
\end{theorem}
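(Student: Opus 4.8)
The plan is to reduce Theorem~\ref{equivconfembflat} to the compatibility of the two twisted Segal--Sugawara constructions, exactly as in the classical (non-twisted) argument of Belkale, but carried out inside Damiolini's equivariant framework summarised in \S\ref{twistedWZWoverview}. First I would record that the central charges are equal, $a_{\widehat{\mathfrak{q}}}=a_{\widehat{\mathfrak{p}}}$, by the conformality hypothesis, and that this equality is insensitive to the $\Gamma$-action (it only depends on the Lie-algebra data $\dim\mathfrak{p}_i,\,h^\vee_{\mathfrak{p}_i},\,d_i$ and $\dim\mathfrak{q},\,h^\vee_{\mathfrak{q}}$). The key point to establish is then that the twisted coset Virasoro representation has trivial central charge, so that the diagram
\[
\begin{tikzcd}
& \overline{U}\widehat{\mathfrak{h}}^{\mathcal{P}}_{\mathbb{D}^{\degree}}\Bigl[\tfrac{1}{c_{\mathfrak{p}_i}+h^{\vee}_{\mathfrak{p}_i}}\Bigr] \ar[dd] \\
\widehat{\theta}_{\mathbb{D}^{\degree}/S} \ar[ur,"T_{\mathfrak{h}^{\mathcal{P}}}"] \ar[dr,swap,"T_{\mathfrak{h}^{\mathcal{Q}}}"] & \\
& \overline{U}\widehat{\mathfrak{h}}^{\mathcal{Q}}_{\mathbb{D}^{\degree}}\Bigl[\tfrac{1}{c_{\mathfrak{q}}+h^{\vee}_{\mathfrak{q}}}\Bigr]
\end{tikzcd}
\]
commutes, the vertical arrow being the one induced (with the Dynkin rescaling of central elements) by $\mathfrak{p}\subset\mathfrak{q}$. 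This is the twisted analogue of \eqref{segsugcomp}. I expect this to follow from the local/formal statement over a point: \'etale-locally on $\mathcal{H}ur(\Gamma)_{g,1}$ the twisted loop algebras $\widehat{\mathfrak{h}}^{\mathcal{P}}_{\mathbb{D}^{\degree}}$ and $\widehat{\mathfrak{h}}^{\mathcal{Q}}_{\mathbb{D}^{\degree}}$ are twisted affine Kac--Moody algebras (built from $\mathfrak{p},\mathfrak{q}$ and the $\Gamma$-action), and for these the Sugawara central charge is computed by the same formula $a=\tfrac{\ell\dim}{h^\vee+\ell}$ after passing to the appropriate invariants; since Damiolini's $T_{\mathfrak{h}}$ is characterised by the same commutation relations with the loop algebra as in the untwisted case (\cite[Proposition 4.17]{damiolini:2020}), the coset operator $T_{\mathfrak{h}^{\mathcal{Q}}}-(\text{image of }T_{\mathfrak{h}^{\mathcal{P}}})$ commutes with all of $\mathfrak{h}^{\mathcal{Q}}_{\mathbb{D}^{\degree}}$ and has central charge $a_{\widehat{\mathfrak{q}}}-a_{\widehat{\mathfrak{p}}}=0$, hence acts as a scalar, i.e.\ the diagram commutes modulo the center.

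Granting the commuting diagram, I would then propagate it through the four remaining steps of \S\ref{twistedWZWoverview} exactly as Belkale does in the untwisted setting. The equivariance of $\mathfrak{p}\subset\mathfrak{q}$ furnishes compatible maps $\mathcal{F}^+(\mathfrak{h}^{\mathcal{P}}_{\mathbb{D}^{\degree}})\to\mathcal{F}^+(\mathfrak{h}^{\mathcal{Q}}_{\mathbb{D}^{\degree}})$ and, after projectivising and combining with $F^0\theta_{\mathbb{D}^{\degree},S}$, a commuting square relating $\mathbb{P}T_{\mathfrak{h}^{\mathcal{P}},S}$ and $\mathbb{P}T_{\mathfrak{h}^{\mathcal{Q}},S}$; passing to the induced operators on the irreducible quotients $\mathcal{H}^{\mathcal{P}}_{(d_1,\dots,d_k)}$ and $\mathcal{H}^{\mathcal{Q}}_1$ and then to the bundles of covacua $\mathbb{V}_{\mathfrak{h}^{\mathcal{P}},(d_1,\dots,d_k),\calCt\to\calC}$ and $\mathbb{V}_{\mathfrak{h}^{\mathcal{Q}},1,\calCt\to\calC}$ (step \ref{finalquot}), the morphism of the previous Proposition intertwines $\nabla^{\mathcal{P}}$ and $\nabla^{\mathcal{Q}}$ up to $\mathcal{O}_S^\times$. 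Dualising gives the flatness statement for $\mathbb{V}^{\dagger}_{\mathfrak{p},(d_1,\dots,d_k),\calCt\to\calC}\to\mathbb{V}^{\dagger}_{\mathfrak{h},1,\calCt\to\calC}$.

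The main obstacle, as in the classical case, is not the formal diagram chase but checking that Damiolini's twisted Segal--Sugawara operators are genuinely compatible with an equivariant embedding — i.e.\ that the coset construction still works once one has passed to $\Gamma$-invariants/the parahoric group schemes. Concretely, one must make sure that the rescaling of central elements by the Dynkin index $d=(d_1,\dots,d_k)$ is the correct one to align the two Sugawara tensors after the twist, and that the normal-ordering and the completions $\overline{U}$ used by Damiolini are stable under the induced map $\widehat{\mathfrak{h}}^{\mathcal{P}}_{\mathbb{D}^{\degree}}\to\widehat{\mathfrak{h}}^{\mathcal{Q}}_{\mathbb{D}^{\degree}}$. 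Since the Dynkin index of a conformal embedding is the same whether or not one remembers the $\Gamma$-action (it is computed from the restriction of the Killing form, which is $\Gamma$-equivariant), and since the twisted affine Sugawara central charge is given by the untwisted formula on the relevant invariant subalgebra, I expect these checks to go through without surprises, relying on \cite[\S 4]{damiolini:2020} together with the standard properties of Sugawara operators for (twisted) affine Kac--Moody algebras as in \cite{kac:1990}. An alternative, cleaner route would be to invoke the $\lambda$-flat / twisted $\mathcal{D}$-module refinement of \cite[\S 12]{deshpande.mukhopadhyay:2023} and argue there, but the projective version above suffices for our application.
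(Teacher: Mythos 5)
Your proposal follows essentially the same route as the paper: reduce the statement to the commutativity (up to center) of the twisted Segal--Sugawara diagram via the vanishing of the coset central charge under the conformality hypothesis, then propagate this through the steps of Damiolini's construction and dualize to the bundles of conformal blocks. The only real difference is in how the triviality of the coset representation is justified: the paper cites Kac--Wakimoto \cite[Proposition 3.2(c)]{kac.wakimoto:1988} fibrewise (which rests on unitarity of the coset Virasoro module at central charge $0$), whereas your ``commutes with the loop algebra and has central charge zero, hence acts as a scalar'' is an informal version of that step --- note that the coset operators commute only with the image of $\widehat{\mathfrak{h}}^{\mathcal{P}}_{\mathbb{D}^{\degree}}$, not with all of $\widehat{\mathfrak{h}}^{\mathcal{Q}}_{\mathbb{D}^{\degree}}$, and commutation alone does not force scalar action, so the unitarity input is genuinely needed.
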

\begin{proof}
If $\mathfrak{p}\subset\mathfrak{q}$ is conformal, and we take $\ell=1, (\ell_1,\dots,\ell_k)=(d_1,\dots,d_k)$, the Segal-Sugawara morphisms (\ref{segsugtwist}) will again form a commutative diagram, as in (\ref{segsugcomp}),  in the construction of the connection outlined in \ref{twistedWZWoverview}. 
 (As in \cite{belkale:2009}, this follows from considering the coset representation for $T_{\mathfrak{h}^\mathcal{P}}$ and  $T_{\mathfrak{h}^\mathcal{Q}}$, which by \cite[Proposition 3.2(c)]{kac.wakimoto:1988} is trivial for each closed point in $S$, hence trivial on all of $S$.)

 Together with the natural map $$\phi:\begin{tikzcd}\mathcal{F}^+\left(\mathfrak{h}^{\mathcal{P}}_{\mathbb{D}^{\degree}}\right)\ar[r] & 
\mathcal{F}^+\left(\mathfrak{h}^{\mathcal{Q}}_{\mathbb{D}^{\degree}}\right)\end{tikzcd}$$ this gives in step \ref{segsugdescent} $$\phi\left(T_{\mathfrak{h}^\mathcal{Q}}(\widehat{X})(s)\right)=T_{\mathfrak{h}^\mathcal{P}}(\widehat{X})(\phi(s))$$ for $\widehat{X}\in \widehat{\theta}_{\mathbb{D}^{\degree}/S}$.

 The result then follows by following this through steps \ref{segsugdescent} and \ref{finalquot}, and dualizing to the bundles of conformal blocks.
\end{proof}

\def\cftil#1{\ifmmode\setbox7\hbox{$\accent"5E#1$}\else
  \setbox7\hbox{\accent"5E#1}\penalty 10000\relax\fi\raise 1\ht7
  \hbox{\lower1.15ex\hbox to 1\wd7{\hss\accent"7E\hss}}\penalty 10000
  \hskip-1\wd7\penalty 10000\box7}
  \def\cftil#1{\ifmmode\setbox7\hbox{$\accent"5E#1$}\else
  \setbox7\hbox{\accent"5E#1}\penalty 10000\relax\fi\raise 1\ht7
  \hbox{\lower1.15ex\hbox to 1\wd7{\hss\accent"7E\hss}}\penalty 10000
  \hskip-1\wd7\penalty 10000\box7}
  \def\cftil#1{\ifmmode\setbox7\hbox{$\accent"5E#1$}\else
  \setbox7\hbox{\accent"5E#1}\penalty 10000\relax\fi\raise 1\ht7
  \hbox{\lower1.15ex\hbox to 1\wd7{\hss\accent"7E\hss}}\penalty 10000
  \hskip-1\wd7\penalty 10000\box7}
  \def\cftil#1{\ifmmode\setbox7\hbox{$\accent"5E#1$}\else
  \setbox7\hbox{\accent"5E#1}\penalty 10000\relax\fi\raise 1\ht7
  \hbox{\lower1.15ex\hbox to 1\wd7{\hss\accent"7E\hss}}\penalty 10000
  \hskip-1\wd7\penalty 10000\box7} \def\cprime{$'$}


\begin{thebibliography}{BMW24b}
\expandafter\ifx\csname url\endcsname\relax
  \def\url#1{\texttt{#1}}\fi
\expandafter\ifx\csname doi\endcsname\relax
  \def\doi#1{\burlalt{doi:#1}{http://dx.doi.org/#1}}\fi
\expandafter\ifx\csname urlprefix\endcsname\relax\def\urlprefix{URL }\fi
\expandafter\ifx\csname href\endcsname\relax
  \def\href#1#2{#2}\fi
\expandafter\ifx\csname burlalt\endcsname\relax
  \def\burlalt#1#2{\href{#2}{#1}}\fi

\bibitem[AHLH23]{alper2022existence}
J.~Alper, D.~Halpern-Leistner, and J.~Heinloth.
\newblock Existence of moduli spaces for algebraic stacks.
\newblock {\em Invent. Math.}, 234(3):949--1038, 2023,
  \doi{10.1007/s00222-023-01214-4}.
\newblock \burlalt{\tt [arxiv:1812.01128]}{http://arxiv.org/abs/1812.01128}.

\bibitem[Alp13]{alper}
J.~Alper.
\newblock Good moduli spaces for {A}rtin stacks.
\newblock {\em Ann. Inst. Fourier (Grenoble)}, 63(6):2349--2402, 2013,
  \doi{10.5802/aif.2833}.
\newblock \burlalt{\tt [arxiv:0804.2242]}{http://arxiv.org/abs/0804.2242}.

\bibitem[Ati57]{atiyah:1957}
M.~F. Atiyah.
\newblock Complex analytic connections in fibre bundles.
\newblock {\em Trans. Amer. Math. Soc.}, 85:181--207, 1957,
  \doi{10.2307/1992969}.

\bibitem[BBMP23]{BBMP:2020}
T.~Baier, M.~Bolognesi, J.~Martens, and C.~Pauly.
\newblock The {H}itchin connection in arbitrary characteristic.
\newblock {\em J. Inst. Math. Jussieu}, 22(1):449--492, 2023,
  \doi{10.1017/S1474748022000196}.
\newblock \burlalt{\tt [arxiv:2002.12288]}{http://arxiv.org/abs/2002.12288}.

\bibitem[BBMP25]{BBMP:2025}
T.~Baier, M.~Bolognesi, J.~Martens, and C.~Pauly.
\newblock Abelianization of the ${\SL}_2$ {H}itchin connection at level four.
\newblock preprint, 2025, \burlalt{\tt
  [arxiv:2512.13307]}{http://arxiv.org/abs/2512.13307}.

\bibitem[BD91]{beilinson-drinfeld:1991}
A.~Be\u{\i}linson and V.~Drinfeld.
\newblock Quantization of {H}itchin’s integrable system and {H}ecke
  eigensheaves, 1991.
\newblock
  \urlprefix\url{https://math.uchicago.edu/~drinfeld/langlands/QuantizationHitchin.pdf}.

\bibitem[{Bel}09]{belkale:2009}
P.~{Belkale}.
\newblock {Strange duality and the Hitchin/WZW connection.}
\newblock {\em {J. Differ. Geom.}}, 82(2):445--465, 2009,
  \doi{10.4310/jdg/1246888491}.
\newblock \burlalt{\tt [arxiv:0705.0717]}{http://arxiv.org/abs/0705.0717}.

\bibitem[BH12]{biswashoffmann}
I.~Biswas and N.~Hoffmann.
\newblock Poincar\'e{} families of {$G$}-bundles on a curve.
\newblock {\em Math. Ann.}, 352(1):133--154, 2012,
  \doi{10.1007/s00208-010-0628-x}.
\newblock \burlalt{\tt [arxiv:1001.2123]}{http://arxiv.org/abs/1001.2123}.

\bibitem[BL94]{beauville.laszlo:1994}
A.~Beauville and Y.~Laszlo.
\newblock Conformal blocks and generalized theta functions.
\newblock {\em Comm. Math. Phys.}, 164(2):385--419, 1994.
\newblock \urlprefix\url{http://projecteuclid.org/euclid.cmp/1104270837}.
\newblock \burlalt{\tt
  [arxiv:alg-geom/9309003]}{http://arxiv.org/abs/alg-geom/9309003}.

\bibitem[BL04]{birkenhake.lange:2004}
C.~Birkenhake and H.~Lange.
\newblock {\em Complex abelian varieties}, volume 302 of {\em Grundlehren der
  Mathematischen Wissenschaften [Fundamental Principles of Mathematical
  Sciences]}.
\newblock Springer-Verlag, Berlin, second edition, 2004.
\newblock \doi{10.1007/978-3-662-06307-1}.

\bibitem[BMW23]{BMW:2023}
I.~Biswas, S.~Mukhopadhyay, and R.~Wentworth.
\newblock A {H}itchin connection on nonabelian theta functions for parabolic
  {$G$}-bundles.
\newblock {\em J. Reine Angew. Math.}, 803:137--181, 2023,
  \doi{10.1515/crelle-2023-0049}.
\newblock \burlalt{\tt [arxiv:2103.03792]}{http://arxiv.org/abs/2103.03792}.

\bibitem[BMW24a]{biswas2023geometrization}
I.~Biswas, S.~Mukhopadhyay, and R.~Wentworth.
\newblock Geometrization of the {TUY}/{WZW}/{KZ} connection.
\newblock {\em Lett. Math. Phys.}, 114(85), 2024,
  \doi{10.1007/s11005-024-01834-8}.
\newblock \burlalt{\tt [arxiv:2110.00430]}{http://arxiv.org/abs/2110.00430}.

\bibitem[BMW24b]{bmw:2024}
I.~Biswas, S.~Mukhopadhyay, and R.~Wentworth.
\newblock A parabolic analog of a theorem of {B}eilinson and {S}chechtman.
\newblock {\em Int. Math. Res. Not. IMRN}, 2024(13):10319--10348, 05 2024,
  \doi{10.1093/imrn/rnae085}.
\newblock \burlalt{\tt [arxiv:2307.09196]}{http://arxiv.org/abs/2307.09196}.

\bibitem[BNR89]{beauville.narasimhan.ramanan:1989}
A.~{Beauville}, M.~{Narasimhan}, and S.~{Ramanan}.
\newblock {Spectral curves and the generalized theta divisor.}
\newblock {\em {J. Reine Angew. Math.}}, 398:169--179, 1989,
  \doi{10.1515/crll.1989.398.169}.

\bibitem[BS88]{beilinson.schechtman:1988}
A.~A. Be\u{\i}linson and V.~V. Schechtman.
\newblock Determinant bundles and {V}irasoro algebras.
\newblock {\em Comm. Math. Phys.}, 118(4):651--701, 1988.
\newblock \urlprefix\url{http://projecteuclid.org/euclid.cmp/1104162170}.

\bibitem[BS15]{balaji.seshadri:2015}
V.~Balaji and C.~S. Seshadri.
\newblock Moduli of parahoric {$\scr G$}-torsors on a compact {R}iemann
  surface.
\newblock {\em J. Algebraic Geom.}, 24(1):1--49, 2015,
  \doi{10.1090/S1056-3911-2014-00626-3}.
\newblock \burlalt{\tt [arxiv:1009.3485]}{http://arxiv.org/abs/1009.3485}.

\bibitem[BZF04]{ben-zvi.frenkel:2004}
D.~Ben-Zvi and E.~Frenkel.
\newblock Geometric realization of the {S}egal-{S}ugawara construction.
\newblock In {\em Topology, geometry and quantum field theory}, volume 308 of
  {\em London Math. Soc. Lecture Note Ser.}, pages 46--97. Cambridge Univ.
  Press, Cambridge, 2004, \doi{10.1017/CBO9780511526398.006}.
\newblock \burlalt{\tt
  [arxiv:math/0301206]}{http://arxiv.org/abs/math/0301206}.

\bibitem[Dam20]{damiolini:2020}
C.~Damiolini.
\newblock Conformal blocks attached to twisted groups.
\newblock {\em Math. Z.}, 295(3-4):1643--1681, 2020,
  \doi{10.1007/s00209-019-02414-6}.
\newblock \burlalt{\tt [arxiv:1707.04448]}{http://arxiv.org/abs/1707.04448}.

\bibitem[DM23]{deshpande.mukhopadhyay:2023}
T.~Deshpande and S.~Mukhopadhyay.
\newblock Crossed modular categories and the {V}erlinde formula for twisted
  conformal blocks.
\newblock {\em Camb. J. Math.}, 11(1):159--297, 2023,
  \doi{10.4310/CJM.2023.v11.n1.a2}.
\newblock \burlalt{\tt [arxiv:1909.10799]}{http://arxiv.org/abs/1909.10799}.

\bibitem[EGA]{ega}
A.~Grothendieck.
\newblock \'{E}l\'{e}ments de g\'{e}om\'{e}trie alg\'{e}brique. {I--IV}.
\newblock {\em Inst. Hautes \'{E}tudes Sci. Publ. Math.}, (4, 8, 11, 17, 20,
  24, 28), 1960-61.
\newblock R\'edig\'es avec la collaboration de J. Dieudonn\'e.
\newblock \urlprefix\url{http://www.numdam.org/item/PMIHES_1960__4__5_0/}.

\bibitem[Hei10]{heinloth:2010}
J.~Heinloth.
\newblock Uniformization of {$\mathcal G$}-bundles.
\newblock {\em Math. Ann.}, 347(3):499--528, 2010,
  \doi{10.1007/s00208-009-0443-4}.
\newblock \burlalt{\tt [arxiv:0711.4450]}{http://arxiv.org/abs/0711.4450}.

\bibitem[Hei17]{heinloth:2017}
J.~Heinloth.
\newblock Hilbert-{M}umford stability on algebraic stacks and applications to
  {$\mathcal G$}-bundles on curves.
\newblock {\em \'{E}pijournal Geom. Alg\'{e}brique}, 1:Art. 11, 37, 2017,
  \doi{10.46298/epiga.2018.volume1.2062}.
\newblock \burlalt{\tt [arxiv:1609.06058]}{http://arxiv.org/abs/1609.06058}.

\bibitem[Hit87]{hitchin:1987}
N.~Hitchin.
\newblock Stable bundles and integrable systems.
\newblock {\em Duke. Math. J.}, 54(1):91--114, 1987,
  \doi{10.1215/S0012-7094-87-05408-1}.

\bibitem[Hit90]{hitchin:1990}
N.~J. Hitchin.
\newblock Flat connections and geometric quantization.
\newblock {\em Comm. Math. Phys.}, 131(2):347--380, 1990.
\newblock \urlprefix\url{https://projecteuclid.org/euclid.cmp/1104200841}.

\bibitem[HK23]{hong.kumar:2018}
J.~Hong and S.~Kumar.
\newblock Conformal blocks for {G}alois covers of algebraic curves.
\newblock {\em Compos. Math.}, 159(10):2191–2259, 2023,
  \doi{10.1112/S0010437X23007418}.
\newblock \burlalt{\tt [arxiv:1807.00118]}{http://arxiv.org/abs/1807.00118}.

\bibitem[Kac90]{kac:1990}
V.~G. Kac.
\newblock {\em Infinite-dimensional {L}ie algebras}.
\newblock Cambridge University Press, Cambridge, third edition, 1990.
\newblock \doi{10.1017/CBO9780511626234}.

\bibitem[KP95]{kouvidakis.pantev:1995}
A.~Kouvidakis and T.~Pantev.
\newblock The automorphism group of the moduli space of semi stable vector
  bundles.
\newblock {\em Math. Ann.}, 302:225--268, 1995, \doi{10.1007/BF01444495}.
\newblock \burlalt{\tt
  [arxiv:alg-geom/9306001]}{http://arxiv.org/abs/alg-geom/9306001}.

\bibitem[KRR13]{kac.raina:2013}
V.~G. Kac, A.~K. Raina, and N.~Rozhkovskaya.
\newblock {\em Bombay lectures on highest weight representations of infinite
  dimensional {L}ie algebras}, volume~29 of {\em Advanced Series in
  Mathematical Physics}.
\newblock World Scientific Publishing Co. Pte. Ltd., Hackensack, NJ, second
  edition, 2013.
\newblock \doi{10.1142/8882}.

\bibitem[Kum87]{kumar:1987}
S.~Kumar.
\newblock Demazure character formula in arbitrary {K}ac-{M}oody setting.
\newblock {\em Invent. Math.}, 89(2):395--423, 1987, \doi{10.1007/BF01389086}.

\bibitem[KW88]{kac.wakimoto:1988}
V.~G. Kac and M.~Wakimoto.
\newblock Modular and conformal invariance constraints in representation theory
  of affine algebras.
\newblock {\em Adv. in Math.}, 70(2):156--236, 1988,
  \doi{10.1016/0001-8708(88)90055-2}.

\bibitem[{Las}98]{laszlo:1998}
Y.~{Laszlo}.
\newblock {Hitchin's and WZW connections are the same.}
\newblock {\em {J. Differ. Geom.}}, 49(3):547--576, 1998,
  \doi{10.4310/jdg/1214461110}.

\bibitem[Loo13]{looijenga:2013}
E.~Looijenga.
\newblock From {WZW} models to modular functors.
\newblock In {\em Handbook of moduli. {V}ol. {II}}, volume~25 of {\em Adv.
  Lect. Math. (ALM)}, pages 427--466. Int. Press, Somerville, MA, 2013.
\newblock \burlalt{\tt [arxiv:1009.2245]}{http://arxiv.org/abs/1009.2245}.

\bibitem[LSV17]{laza.sacca.voisin:2017}
R.~Laza, G.~Sacca, and C.~Voisin.
\newblock A hyper-{K}\"ahler compactification of the intermediate {J}acobian
  fibration associated to a cubic fourfold.
\newblock {\em Acta Math.}, 218(1):55--135, 2017,
  \doi{0.4310/ACTA.2017.v218.n1.a2}.
\newblock \burlalt{\tt [arxiv:1602.05534]}{http://arxiv.org/abs/1602.05534}.

\bibitem[Mat86]{mathieu:1986}
O.~Mathieu.
\newblock Formules de {D}emazure-{W}eyl, et g\'{e}n\'{e}ralisation du
  th\'{e}or\`eme de {B}orel-{W}eil-{B}ott.
\newblock {\em C. R. Acad. Sci. Paris S\'{e}r. I Math.}, 303(9):391--394, 1986.

\bibitem[Muk]{mukh:forth}
S.~Mukhopadhyay.
\newblock Strange duality of twisted conformal blocks associated to \'etale
  covers.
\newblock Forthcoming.

\bibitem[Mum66]{mumford:1966}
D.~Mumford.
\newblock On the equations defining abelian varieties. {I}.
\newblock {\em Invent. Math.}, 1:287--354, 1966, \doi{10.1007/BF01389737}.

\bibitem[Mum71]{mumford:1971}
D.~Mumford.
\newblock Theta characteristics of an algebraic curve.
\newblock {\em Ann. Sci. \'Ecole Norm. Sup. (4)}, 4:181--192, 1971,
  \doi{10.24033/asens.1209}.

\bibitem[Mum74]{mumford:1974}
D.~Mumford.
\newblock Prym varieties. {I}.
\newblock In {\em Contributions to analysis (a collection of papers dedicated
  to {L}ipman {B}ers)}, pages 325--350. Academic Press, New York, 1974,
  \doi{10.1016/B978-0-12-044850-0.50032-0}.

\bibitem[Mum08]{mumford:2008}
D.~Mumford.
\newblock {\em Abelian varieties}, volume~5 of {\em Tata Institute of
  Fundamental Research Studies in Mathematics}.
\newblock Tata Institute of Fundamental Research, Bombay; by Hindustan Book
  Agency, New Delhi, 2008.
\newblock With appendices by C. P. Ramanujam and Yuri Manin, Corrected reprint
  of the second (1974) edition.

\bibitem[MZ20]{mukhopadhyay.zelaci:2020}
S.~Mukhopadhyay and H.~Zelaci.
\newblock Conformal embedding and twisted theta functions at level one.
\newblock {\em Proc. Amer. Math. Soc.}, 148(1):9--22, 2020,
  \doi{10.1090/proc/14695}.

\bibitem[Nit91]{nitsure:1991}
N.~Nitsure.
\newblock Moduli space of semistable pair on a curve.
\newblock {\em Proc. London. Math. Soc.}, 62(3):275--300, 1991,
  \doi{10.1112/plms/s3-62.2.275}.

\bibitem[Oua23]{ouaras:thesis}
Z.~Ouaras.
\newblock {\em {Parabolic {H}itchin connection}}.
\newblock Theses, {Universit{\'e} C{\^o}te d'Azur}, July 2023.
\newblock \urlprefix\url{https://theses.hal.science/tel-04200213}.

\bibitem[Oua25]{ouaras2023parabolic}
Z.~Ouaras.
\newblock Parabolic {H}itchin connection.
\newblock {\em Journal of Algebra}, 665:628--678, 2025,
  \doi{10.1016/j.jalgebra.2024.11.008}.
\newblock \burlalt{\tt [arxiv:2310.02813]}{http://arxiv.org/abs/2310.02813}.

\bibitem[PR10]{pappas.rapoport:2010}
G.~Pappas and M.~Rapoport.
\newblock Some questions about {$\mathcal G$}-bundles on curves.
\newblock In {\em Algebraic and arithmetic structures of moduli spaces
  ({S}apporo 2007)}, volume~58 of {\em Adv. Stud. Pure Math.}, pages 159--171.
  Math. Soc. Japan, Tokyo, 2010, \doi{10.2969/aspm/05810159}.
\newblock \burlalt{\tt [arxiv:0808.3743]}{http://arxiv.org/abs/0808.3743}.

\bibitem[R{\'e}g24]{rega:2024}
K.~R{\'e}ga.
\newblock Moduli of anti-invariant {H}iggs bundles.
\newblock arXiv preprint, 2024, \burlalt{\tt
  [arxiv:2409.05793]}{http://arxiv.org/abs/2409.05793}.

\bibitem[SGA2]{SGA2:1968}
A.~Grothendieck.
\newblock {\em Cohomologie locale des faisceaux coh\'erents et th\'eor\`emes de
  {L}efschetz locaux et globaux {$(SGA$} {$2)$}}, volume Vol. 2 of {\em
  Advanced Studies in Pure Mathematics}.
\newblock North-Holland Publishing Co., Amsterdam; Masson \& Cie, Editeur,
  Paris, 1968.
\newblock Augment\'e{} d'un expos\'e{} par Mich\`ele Raynaud, S\'eminaire de
  G\'eom\'etrie Alg\'ebrique du Bois-Marie, 1962.
\newblock \burlalt{\tt
  [arxiv:math/0511279]}{http://arxiv.org/abs/math/0511279}.

\bibitem[Sin21]{singh}
A.~Singh.
\newblock Differential operators on {H}itchin variety.
\newblock {\em J. Algebra}, 566:361--373, 2021,
  \doi{10.1016/j.jalgebra.2020.08.031}.
\newblock \burlalt{\tt [arxiv:2007.05699]}{http://arxiv.org/abs/2007.05699}.

\bibitem[SW86]{schellekens.warner:1986}
A.~N. Schellekens and N.~P. Warner.
\newblock Conformal subalgebras of {K}ac-{M}oody algebras.
\newblock {\em Phys. Rev. D (3)}, 34(10):3092--3096, 1986,
  \doi{10.1103/PhysRevD.34.3092}.

\bibitem[Szc06]{szcesny:2006}
M.~Szczesny.
\newblock Orbifold conformal blocks and the stack of pointed {$G$}-covers.
\newblock {\em J. Geom. Phys.}, 56(9):1920--1939, 2006,
  \doi{10.1016/j.geomphys.2005.11.002}.
\newblock \burlalt{\tt
  [arxiv:math/0408123]}{http://arxiv.org/abs/math/0408123}.

\bibitem[Tsu93]{tsuchimoto:1993}
Y.~Tsuchimoto.
\newblock On the coordinate-free description of the conformal blocks.
\newblock {\em J. Math. Kyoto Univ.}, 33(1):29--49, 1993,
  \doi{10.1215/kjm/1250519338}.

\bibitem[TUY89]{TUY:1989}
A.~Tsuchiya, K.~Ueno, and Y.~Yamada.
\newblock Conformal field theory on universal family of stable curves with
  gauge symmetries.
\newblock In {\em Integrable systems in quantum field theory and statistical
  mechanics}, volume~19 of {\em Adv. Stud. Pure Math.}, pages 459--566.
  Academic Press, Boston, MA, 1989, \doi{10.2969/aspm/01910459}.

\bibitem[Uen08]{ueno:2008}
K.~Ueno.
\newblock {\em Conformal field theory with gauge symmetry}, volume~24 of {\em
  Fields Institute Monographs}.
\newblock American Mathematical Society, Providence, RI; Fields Institute for
  Research in Mathematical Sciences, Toronto, ON, 2008.
\newblock \doi{10.1090/fim/024}.

\bibitem[vGdJ98]{vangeemen.dejong:1998}
B.~van Geemen and A.~J. de~Jong.
\newblock On {H}itchin's connection.
\newblock {\em J. Amer. Math. Soc.}, 11(1):189--228, 1998,
  \doi{10.1090/S0894-0347-98-00252-5}.
\newblock \burlalt{\tt
  [arxiv:alg-geom/9701007]}{http://arxiv.org/abs/alg-geom/9701007}.

\bibitem[Wel83]{welters:1983}
G.~E. Welters.
\newblock Polarized abelian varieties and the heat equations.
\newblock {\em Compositio Math.}, 49(2):173--194, 1983.
\newblock \urlprefix\url{http://www.numdam.org/item?id=CM_1983__49_2_173_0}.

\bibitem[Zel17]{zelaci:thesis}
H.~Zelaci.
\newblock {\em {Moduli spaces of anti-invariant vector bundles over curves and
  conformal blocks}}.
\newblock Theses, {Universit{\'e} C{\^o}te d'Azur}, Sept. 2017.
\newblock \urlprefix\url{https://tel.archives-ouvertes.fr/tel-01679267}.

\bibitem[Zel19a]{zelaci:twistconf}
H.~Zelaci.
\newblock Moduli spaces of anti-invariant vector bundles and twisted conformal
  blocks.
\newblock {\em Math. Res. Lett.}, 26(6):1849--1875, 2019,
  \doi{10.4310/MRL.2019.v26.n6.a12}.
\newblock \burlalt{\tt [arxiv:1711.08296]}{http://arxiv.org/abs/1711.08296}.

\bibitem[Zel19b]{zelaci:2019}
H.~Zelaci.
\newblock Moduli spaces of anti-invariant vector bundles over curves.
\newblock {\em Manuscripta Math.}, 160(1-2):79--97, 2019,
  \doi{10.1007/s00229-018-1050-z}.
\newblock \burlalt{\tt [arxiv:1711.05563]}{http://arxiv.org/abs/1711.05563}.

\bibitem[Zel22]{zelaci:2022}
H.~Zelaci.
\newblock Hitchin systems for invariant and anti-invariant vector bundles.
\newblock {\em Trans. Amer. Math. Soc.}, 375(5):3665--3711, 2022,
  \doi{10.1090/tran/8599}.
\newblock \burlalt{\tt [arxiv:1612.06910]}{http://arxiv.org/abs/1612.06910}.

\end{thebibliography}
\end{document}